\documentclass[reqno, 10pt]{amsart}
\usepackage{color}
\usepackage{amsmath}
\usepackage{amsfonts}
\usepackage{amssymb}
\usepackage{amsthm}
\usepackage{newlfont}
\usepackage{graphicx}

\newtheorem{thm}{Theorem}[section]
\newtheorem{cor}[thm]{Corollary}
\newtheorem{lem}[thm]{Lemma}

\theoremstyle{definition}
\newtheorem{defn}[thm]{Definition}
\theoremstyle{remark}
\newtheorem{rem}[thm]{Remark}

\numberwithin{equation}{section}
\numberwithin{thm}{section}
\newcommand{\norm}[1]{\left\Vert#1\right\Vert}
\newcommand{\abs}[1]{\left\vert#1\right\vert}

\newcommand{\lsm}{\lesssim}

\newcommand{\R}{{\mathbb{R}}}


\def\p{{\frac{4}{d-2}}}
\def\wavekernel{\frac{\sin((t-s)\sqrt{-\Delta})}{\sqrt{-\Delta}}}

\newcommand{\ed}{\end {document}}

%
%
\newcounter{smalllist}

%
%





\title[energy critical wave equations]
{stability and unconditional uniqueness of solutions for energy
critical wave equations in high dimensions}

\author[A. Bulut]{Aynur Bulut}
\address{Department of Mathematics, University of Texas at Austin}
\email{abulut@math.utexas.edu}

\author[M. Czubak]{Magdalena Czubak}
\address{Department of Mathematics, University of Toronto}
\email{czubak@math.toronto.edu}

\author[D. Li]{Dong Li}
\address{Department of Mathematics, University of Iowa, 14 MacLean Hall, Iowa City, IA, 52240}%
\email{mpdongli@gmail.com}

\author[N. Pavlovi{\' c}]{Nata{\u s}a Pavlovi{\' c}}
\address{Department of Mathematics, University of Texas at Austin}
\email{natasa@math.utexas.edu}

\author[X. Zhang]{Xiaoyi Zhang}
\address{Department of Mathematics, University of Iowa, 14 MacLean Hall, Iowa City, IA, 52240 and
Chinese Academy of Science, Beijing}%
\email{zh.xiaoyi@gmail.com}

\begin{document}
\maketitle

\begin{abstract}
In this paper we establish a complete local theory for the
energy-critical nonlinear wave equation (NLW) in high dimensions
${\mathbb R} \times {\mathbb R}^d$ with $d \geq 6$. We prove the
stability of solutions under the weak condition that the
perturbation of the linear flow is small in certain space-time
norms. As a by-product of our stability analysis, we also prove
local well-posedness of solutions for which we only assume the
smallness of the linear evolution. These results provide essential
technical tools that can be applied towards obtaining the extension
to high dimensions of the  analysis of Kenig and Merle \cite{keme06}
of  the dynamics of the focusing (NLW) below the energy threshold.
By employing refined paraproduct estimates we also prove
unconditional uniqueness of solutions for $d\ge 5$ in the natural
energy class. This extends an earlier result by Planchon \cite{P04}.
\end{abstract}

\section{Introduction}

We consider the Cauchy problem for the energy critical nonlinear wave equation

\begin{align*} (\mbox{NLW})
\qquad\left\lbrace\begin{array}{l}u_{tt}-\Delta u=  F(u), \\
u(0, x) = u_0(x), \\
\partial_t u(0, x) = u_1(x), \end{array}\right.
\end{align*}
where $u(t,x)$ is a real valued function defined on  ${\mathbb R} \times {\mathbb R}^d$
for $d \geq 6$, and $u_0 \in \dot{H}^1({\mathbb R}^d)$, $u_1 \in L^2({\mathbb R}^d)$.
Moreover, the nonlinearity is of a power type given by
$$F(u) = \mu |u|^{\frac{4}{d-2}}u,$$
and $\mu \in \{-1, 1\}$.
We note that $\mu = -1$ corresponds to the defocusing problem, while
$\mu = 1$ corresponds to the focusing problem.

The energy for the (NLW) is given by
\begin{equation} \label{energy}
E\left(u(t), \partial_t u(t) \right)
= \frac{1}{2} \|\partial_t u(t, \cdot) \|_{L^2}^2 + \frac{1}{2} \| \nabla u(t, \cdot) \|_{L^2}^2
- \mu \frac{d-2}{2d} \|u(t, \cdot)\|_{L^\frac{2d}{d-2}}^{\frac{2d}{d-2}},
\end{equation}
and it is conserved in time.
Also we remark that if $u(t,x)$ is a solution to (NLW), then $u_{\lambda}(t,x)$ defined
via
$$ u_{\lambda}(t,x)
   = \frac{1}{\lambda^{\frac{d-2}{2}}} \,
   u(\frac{t}{\lambda}, \frac{x}{\lambda})
$$
is also a solution to (NLW). Since the above scaling leaves the energy
invariant, the (NLW) problem is referred to as ``energy critical''.

Local well-posedness for the Cauchy problem (NLW) has been studied in
many papers (see, e.g. \cite{pe84, gisove92, liso95, shst94, shst98, so95, ka94, keme06}).
Here we recall a version of the local well-posedness result as presented
in \cite{keme06} (see also \cite{pe84, gisove92, shst94}) which states that
for $d = 3, 4, 5$ and initial data $(u_0, u_1) \in \dot{H}^1 \times L^2$,
$\| (u_0, u_1) \|_{\dot{H}^1 \times L^2} \leq A$, $0 \in I$, there exists $\delta = \delta(A)$
such that if
$$ \| K(t) \left(\, u_0, u_1\, \right) \|_{L_{t,x}^{\frac{2(d+1)}{d-2}}(I\times \mathbb R^d)} < \delta,$$
there exists a unique solution to (NLW) in $I \times {\mathbb R}^d$ such that
$(u, \partial_t u) \in C(I; \dot{H}^1 \times L^2)$ and
$ \| u \|_{L_{t,x}^{\frac{2(d+1)}{d-2}} (I\times \mathbb R^d) } \leq 2\delta$.
Here $K$ denotes the associated linear operator i.e.,
$$ K(t)(u_0, u_1) = \cos\left(t \sqrt{-\Delta}\right) \, u_0 +
(-\Delta)^{-\frac{1}{2}} \sin\left(t \sqrt{-\Delta}\right) \, u_1.$$
The proof of this local well-posedness in dimensions $3\le d\le 5$ is
based on the use of the standard Strichartz estimates. However this
proof does not carry directly to high dimensions $d > 6$. The main
reason for this is that for $d > 6$ the derivative of the
nonlinearity is no longer Lipschitz continuous in the standard
Strichartz space.

A natural question related to the local well-posedness theory is the stability
of solutions. Roughly speaking this amounts to showing the closeness
of the solution and an approximate solution,
which solves a perturbed equation, if the perturbations of the equation
and of the initial data are small in a certain sense.
More precisely, let
$\tilde u:\, I\times \mathbb R^d\to \mathbb R$ be an approximate solution which solves
the perturbed NLW:
\begin{align*}
\begin{cases}
\tilde u_{tt} - \Delta \tilde u =  F(\tilde u) + e, \\
\tilde u(t_0,x) = \tilde u_0(x), \\
\partial_t \tilde u(t_0,x)=\tilde u_1(x).
\end{cases}
\end{align*}
Assume the perturbation $e$ is small in a certain norm and the
difference of linear flow measured in terms of scattering size
\begin{align} \label{sta_type}
\left\| K(t) (u_0-\tilde u_0, u_1-\tilde u_1)\right\|_{L_{t,x}^{\frac {2(d+1)}{d-2}}(I\times \R^d)}
\end{align}
is small, then the goal of a typical stability result is to show that
there exists a unique solution $u$ to (NLW) with initial
data $(u_0, u_1)$ such that $u$ and $\tilde u$ stay close on the
whole time interval $I$.  Such a stability result for the (NLW) in $3 \leq d \leq 5$ was obtained in
the work of Kenig and Merle \cite{keme06}.
However the proof
does not carry directly to higher dimensions because the nonlinearity
is no longer Lipschitz in the standard Strichartz space.
This problem was first overcome in the context of the energy-critical nonlinear Schr\"{o}dinger equation (NLS) in \cite{tavi05} in
 $d>6$ by
using certain ``exotic Strichartz" spaces which have same scaling with standard Strichartz space but lower
derivative.\footnote{Actually for smallness condition of type \eqref{sta_type}, exotic Strichartz spaces
are also employed to establish stability theory even in dimensions
$3\le d\le 5$, see, e.g. \cite{kenignotes}. However if instead of \eqref{sta_type} one assumes a stronger condition that
$\| (u_0-\tilde u_0, u_1-\tilde u_1) \|_{\dot H^1\times L^2} $ is small, then the proof of stability
theory can be again carried out by standard Strichartz estimates in dimension $3\le d\le 5$.}
The proof was later simplified in \cite{kivi08} (see Section 3 therein) where
stability is established in Sobolev Strichartz spaces by using fractional chain rule.
In the case of the energy-critical Klein-Gordon equation in high dimension stability
was proved by Nakanishi in \cite{na99}. The main technical difficulty in the context of NLW, besides
choosing the appropriate exotic Strichartz space, is
that in order to show that nonlinearity is Lipschitz continuous in these spaces,
one encounters a problem in establishing H\"older continuity of the nonlinearity
in the standard Strichartz space. This is quite different
from the NLS case since in the latter case one works with the local operator $\nabla$, while in NLW
one has to work with fractional derivatives which are nonlocal.

In the defocusing case the global well-posedness theory was worked out in seminal papers
\cite{st88, gr90, gr92, shst93}. In particular,
Struwe \cite{st88} obtained global well-posedness for the (NLW)
in the radial case when $d=3$. Grillakis \cite{gr90} removed the radial assumption in
$d=3$. The global well-posedness and persistence of regularity was shown for
$3 \leq d \leq 5$ by Grillakis \cite{gr92}, Shatah-Struwe \cite{shst93, shst94, shst98}
and Kapitanski \cite{ka94}. On the other hand, in the focusing case, Levine \cite{le74}
proved that if the initial data $(u_0, u_1) \in \dot{H}^1 \times L^2$ are such that
$E\left(u_0, u_1 \right) < 0$, then the solution must blowup in finite time. Hence,
in the focusing case, the global well-posedness does not hold in general. In particular,
Kenig and Merle in \cite{keme06} presented a detailed study of the focusing case for
$3 \leq d \leq 5$ and showed that depending on the size of the initial data with respect
to the size of the ground state, global well-posedness or blowup occurs.
More precisely, in \cite{keme06}, Kenig and Merle employed sophisticated
``concentrated compactness + rigidity method'', introduced in their work
\cite{keme06s} on the NLS,
to obtain the following dichotomy-type result
under the assumption that $E\left(u_0, u_1 \right) < E\left(W, 0\right)$:
\begin{enumerate}
\item[(i)]
If $\| u_0\|_{\dot{H}^1} < \|W\|_{\dot{H}^1}$, then the global well-posedness holds.
\item[(ii)]
If $\| u_0\|_{\dot{H}^1} >  \|W\|_{\dot{H}^1}$, then a finite time blowup occurs.
\end{enumerate}
Here $W$ denotes the solution to the stationary problem i.e., $W$ satisfies the elliptic
equation
$$ \Delta W + |W|^{\frac{4}{d-2}} W = 0.$$
Many parts of the proof of this dichotomy argument carry out in high dimensions (e.g.
the rigidity theorem is among them). However the local well-posedness as well as a certain stability result
require revisiting in higher dimensions, since as noted above, one has to prove the Lipschitz continuity
of the nonlinearity in the exotic Strichartz spaces and also the H\"older continuity in the standard
Strichartz spaces.

The purpose of this paper is to establish a complete local theory for (NLW) in high dimensions $d\ge 6$
by providing a stability result for the (NLW) in $d\ge 6$ as well as an unconditional uniqueness result in
$\mathbb R \times {\mathbb R}^d$ for $ d \geq 5$. More precisely:

\begin{enumerate}
\item We prove a stability result for  the (NLW) for $d \geq 6$ via
introducing appropriate exotic Strichartz spaces  (in particular,
see the definition of the space $X$ in Section \ref{sec-notation}) and via working in Strichartz spaces of Besov type (see the definition of the space $\dot S^{1}$ in Section \ref{sec-notation}).
In order to prove Lipschitz continuity of nonlinearity in the exotic Strichartz spaces, one usually proves
the H\"older continuity of the nonlinearity in the standard Strichartz space of Sobolev type.  As mentioned above this leads to a technical difficulty which
is different from the NLS case. In the NLS case, the H\"older continuity can be easily established due to
the fact that $\nabla$ is a local operator.  On the other hand, in the NLW case, the standard Strichartz space
involves the fractional derivative which is nonlocal and this causes the technical difficulty to prove
H\"older continuity in the Strichartz space of Sobolev type. We shall circumvent this
difficulty by choosing the working space as Strichartz space of Besov type, space $\dot S^{1},$ and then
transferring the corresponding result to the Sobolev setting (see Remark \ref{rem1}, Lemma
\ref{lem4aa} and Section \ref{sec-perturbation} for more details).
Hence we can prove the main stability result stated in Theorem
\ref{thm_long2} in the pure Sobolev setting
\footnote{In Theorem \ref{thm_long2} we do not
assume smallness in exotic Strichartz spaces, as it was the case
with the stability result for the NLS in \cite{tavi05}.}.

We remark that a direct
side-product of our stability result is continuous dependance of
the data that follows from Theorem \ref{thm_long2} by taking
$e=0$.

Also using the nonlinear estimates that we employ in the stability analysis, we obtain a local in time
existence of solutions to (NLW) and a standard blow-up criterion, see Theorem \ref{lwpthm} and
Lemma \ref{blowup_criterion_lemma} for the precise statements
of these results.

\item  By using paraproduct estimates we
prove unconditional uniqueness of strong solutions to the (NLW) as
stated in Theorem \ref{thm_unconditional}. By unconditional
uniqueness, we mean that for given initial data $(u_0,u_1)$, there
exists at most one solution of (NLW) in the class $C_t \dot H_x^1(I\times \R^d)$. 
In the context of $\dot H^s$ critical NLS, the unconditional uniqueness 
was first established 
by Furioli and Terraneo \cite{FT03} using para-product analysis. 
In the context of energy critical NLW, this problem was first addressed by Planchon \cite{P04}, where the unconditional uniqueness was established in dimensions $d=4, 5$ 
(a review of the unconditional uniqueness for both the NLS and NLW can be found in the paper by Furioli, Planchon and Terraneo \cite{FPT03}).
As a matter of fact, the proof presented in \cite{P04}  can also cover the 6-dimensional case with quadratic nonlinearity  $u^2$. 
The main technical barrier when extending the analysis to high dimensions is that the nonlinearity fails to be $C^2$. Therefore one cannot do Taylor expansion on the nonlinearity to second order as in the low dimensional case, see \cite{P04} for more details. The analysis used in this paper is reminiscent of the one in \cite{P04}; on the other hand, to remove the restriction on the dimension, we need more refined estimates on the nonlinearity.

Interestingly, the proof of unconditional uniqueness also yields a new proof of local well-posedness in high
dimensions $d\ge 5$ (see Remark \ref{rem1043}).

We should also stress that the unconditional uniqueness in $d=3$ is
still open due to the failure of the endpoint Strichartz estimates
except the radial case (see however \cite{MP06} for an interesting result concerning uniqueness of
weak solutions to defocusing NLW in $d=3$ under a local energy
inequality assumption on the light cone).

\end{enumerate}

\vspace{0.3cm}

We remark that the stability result of this paper combined with a modification of
the profile decomposition for the linear wave equation, that was for
$d=3$ obtained by Bahouri and G\'erard \cite{bage99} and extended to high dimensions
$d > 3$ by Bulut \cite{bu09}, implies
that the dichotomy result of Kenig and Merle \cite{keme06}
is valid in all dimensions $d \geq 3$. Hence the stability result of
this paper is a technical tool that can be applied directly to understand
the dynamics of the focusing (NLW) below the energy threshold.

Another application of the stability result obtained in this paper is
in studying the dynamics of the focusing (NLW) at the energy threshold
 $E\left(u_0, u_1 \right) = E\left(W, 0\right)$ in high dimensions.
 Such dynamics were analyzed  by Duyckaerts and Merle \cite{dume07}
for $3 \leq d \leq 5$, and recently by Li and Zhang \cite{lizh09} in high dimensions $d \geq 6$
(see also \cite{dume07s} and \cite{lizh09s} for the NLS case).

\subsection*{Organization of the paper}
In Section \ref{sec-notation}  we introduce the notations and
present various estimates that will be used throughout the paper.
Main results of this paper: the local well-posedness Theorem
\ref{lwpthm}, the unconditional uniqueness Theorem
\ref{thm_unconditional}, the standard blow-up criterion Lemma
\ref{blowup_criterion_lemma} and the stability result Theorem
\ref{thm_long2} are stated in Section \ref{sec-results}. Theorems
\ref{lwpthm}  and  \ref{thm_unconditional} are proved in  Section
\ref{sec-lwp}. In Section \ref{sec-perturbation} we present the
proof of the main stability result, by first presenting a short-term
perturbation result followed by the main long-term perturbation
result.

\subsection*{Acknowledgements}
We are thankful to Carlos Kenig for very useful discussions. We are also grateful to
Fabrice Planchon for some helpful corrections and for pointing out additional
references.
A.B. thanks Institut Henri Poincar\'e (IHP) for supporting her
participation in the Nonlinear Waves and Dispersion trimester during
part of the time that this work was in preparation. The work of D.L.
is supported in part by the NSF grant DMS 0635607, NSF grant
No.~0908032 and the old gold summer fellowship from University of
Iowa. The work of N.P. is supported by NSF grant DMS 0758247 and an
Alfred P. Sloan Research Fellowship. The work of X.Z. is supported
by the NSF grants DMS 0635607, DMS 10601060 and the project 973 in
China.

\section{Notation and Preliminaries} \label{sec-notation}

\subsection{Notations}
In what follows, we write $X\lesssim Y$ or $Y\gtrsim X$ to indicate that there exists a constant $C>0$ such that $X\leq CY$.  We also use the symbol $O(Y)$
to denote any quantity $X$ with the property $|X|\lesssim Y$ and $\nabla$ for the derivative operator in the space variable.

For any time interval $I\subset\mathbb{R}$, we write $L_t^qL_x^r(I\times\mathbb{R}^d)$ to denote the Banach space of functions
 $u:I\times\mathbb{R}^d\rightarrow\mathbb{R}$ with the norm
\begin{align*}
\lVert u\rVert_{L_t^qL_x^r(I\times\mathbb{R}^d)}:=\left(\int_I \left(\int_{\mathbb{R}^d} |u|^rdx\right)^\frac{q}{r} dt\right)^\frac{1}{q}<\infty,
\end{align*}
with the standard definitions when $q$ or $r$ is equal to infinity.  When $q=r$, we abbreviate $L_t^qL_x^q$ as $L_{t,x}^q$.

We define the Fourier transform on $\mathbb{R}^d$ by
\begin{align*}
\hat{f}(\xi):=(2\pi)^{-\frac{d}{2}}\int_{\mathbb{R}^d} e^{-ix\cdot \xi}f(x)dx,
\end{align*}
and, for $s\in\mathbb{R}$, the fractional differentiation operator $|\nabla|^s$ by
\begin{align*}
\widehat{|\nabla|^s f}(\xi):=(4\pi^2|\xi|^2)^{\frac{s}{2}}\hat{f}(\xi),
\end{align*}
which allows us to define the homogeneous Sobolev norm,
\begin{align*}
\lVert f\rVert_{\dot{H}_x^{s,p}}:=\lVert |\nabla|^s f\rVert_{L_x^p(\mathbb{R}^d)}.
\end{align*}
In the case, $p=2$, we abbreviate $\dot{H}_x^{s,2}$ as $\dot{H}_x^s$.

For any constant $C>0$, we define
\begin{align}
\phi_{\le C}(x):=
\phi \bigl( \tfrac{x}{C}\bigr) \quad\textrm{and}\quad \phi_{> C}:=1-\phi_{\le C},
\end{align}
where $\phi\in C^\infty(\R^d)$ is a
radial bump function supported in the ball $\{ x \in \R^d: |x| \leq
\frac{25} {24} \}$ with $\phi(x)=1$ on $\{ x \in \R^d: |x|
\leq 1 \}$.

For each number $ j \in \mathbb Z$, we define the following standard Littlewood-Paley Fourier multipliers
\begin{align*}
\widehat{\Delta_{\leq j} f}(\xi) &:= \phi_{\leq 2^j}(\xi) \hat f(\xi),\\
\widehat{\Delta_{> j} f}(\xi) &:= \phi_{> 2^j}(\xi) \hat f(\xi),\\
\widehat{\Delta_j f}(\xi) &:= (\phi_{\leq 2^j} - \phi_{\leq 2^{j-1}})(\xi) \hat
f(\xi),
\end{align*}
with similar definitions for $\Delta_{<j}$ and $\Delta_{\geq j}$.  Moreover, we define
$$ \Delta_{j < \cdot \leq l} := \Delta_{\leq l} - \Delta_{\leq j} = \sum_{j < m \leq l} \Delta_{m}$$
whenever $j< l$.

We will use Bernstein estimate:
\begin{align}\label{bern}
\norm{\Delta_{j} u}_{L^{q}(\R^d)}\lesssim 2^{j(\tfrac{d}{p}-\tfrac{d}{q})}\norm{\Delta_{j} u}_{L^p(\R^d)},
\end{align}
where $1\leq p \leq q\leq\infty$.

We recall the definition of the homogenous Besov spaces
$\dot{B}^s_{p,q}$ (see for instance \cite{bergh_lof}). For each
$s\in\mathbb{R}$ and $1\leq p\leq \infty$, $1\leq q< \infty$, we
define
\begin{align*}
\lVert u\rVert_{\dot{B}^s_{p,q}}=\left(\sum_{j\in\mathbb{Z}} (2^{sj}\lVert \Delta_j u\rVert_{L^p(\mathbb{R}^d)})^q\right)^\frac{1}{q},
\end{align*}
and
\begin{align*}
\dot{B}^s_{p,q}(\mathbb{R}^d)=\{u\in \mathcal{S}'(\mathbb{R}^d):\lVert u\rVert_{\dot{B}^s_{p,q}(\mathbb{R}^d)}<\infty\}.
\end{align*}
Another equivalent characterization of Besov space will also be used in this paper (see \cite{bergh_lof}).
Namely, for $0<s<1$, $1\le p\le\infty$, $1\le q<\infty$,
\begin{align}
\label{besov_1}\|f\|_{\dot B^s_{p,q}}\sim\biggl(\int_{\R^d}\frac{\|f(x+t)-f(x)\|_p^q}{|t|^{d+sq}}dt\biggr)^{\frac 1q}.
\end{align}
And
\begin{align}
\label{besov_2}\|f\|_{\dot B^s_{p,\infty}}\sim\sup_{t\in \R^d}|t|^{-s}{\|f(x+t)-f(x)\|_p}.
\end{align}

The following lemma is a simple consequence of the definition of Besov norms:
\begin{lem}\label{simplefact}
Let $0<s,\alpha<1$ such that $\frac s{\alpha}<1$. Let $1<p\le\infty$ such that $1< p\alpha\le\infty$. Let $f(z)$ be H\"older continuous of order $\alpha$.  Then,
\begin{align*}
\|f(u)\|_{\dot B^{s}_{p,\infty}}\lsm \|u\|_{\dot B^{\frac s{\alpha}}_{p\alpha,\infty}}^{\alpha}.
\end{align*}
\end{lem}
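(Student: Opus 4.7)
The plan is to apply the modulus-of-continuity characterization \eqref{besov_2} on both sides of the desired inequality, using the H\"older assumption on $f$ as the pointwise bridge between them. Since the hypothesis $0<s<1$, $1<p\le\infty$ places us in the admissible range for the left-hand side, we have
$$\|f(u)\|_{\dot B^{s}_{p,\infty}}\sim \sup_{t\in\R^d}|t|^{-s}\|f(u(\cdot+t))-f(u(\cdot))\|_{L^{p}}.$$

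The first step is to invoke H\"older continuity of $f$ pointwise, $|f(u(x+t))-f(u(x))|\lesssim |u(x+t)-u(x)|^{\alpha}$, then raise to the $p$-th power and integrate in $x$. Since $p\alpha\ge 1$ (guaranteed by the hypothesis $p\alpha>1$), taking the $p$-th root is legitimate and one obtains
$$\|f(u(\cdot+t))-f(u(\cdot))\|_{L^{p}} \lesssim \|u(\cdot+t)-u(\cdot)\|_{L^{p\alpha}}^{\alpha}.$$
The second step is to write $|t|^{-s}=\bigl(|t|^{-s/\alpha}\bigr)^{\alpha}$, which lets one pull the exponent $\alpha$ through the supremum:
$$\sup_{t}|t|^{-s}\|f(u(\cdot+t))-f(u(\cdot))\|_{L^{p}} \lesssim \Bigl(\sup_{t}|t|^{-s/\alpha}\|u(\cdot+t)-u(\cdot)\|_{L^{p\alpha}}\Bigr)^{\alpha}.$$
Finally, a second application of \eqref{besov_2}, valid precisely because the hypotheses $s/\alpha<1$ and $p\alpha>1$ place the new indices in the admissible range, identifies the right-hand side with $\|u\|_{\dot B^{s/\alpha}_{p\alpha,\infty}}^{\alpha}$, which is the claim.

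There is no real obstacle here; the lemma is essentially a direct composition of the definition of H\"older continuity with the modulus-of-continuity description of $\dot B^{s}_{p,\infty}$, and the numerical conditions in the statement are exactly what is needed so that \eqref{besov_2} can be used on both sides. In particular, no Littlewood--Paley decomposition, paraproduct argument, or fractional chain rule is required, which is what makes this lemma a convenient substitute for a chain rule in the H\"older range $\alpha<1$ where $f$ fails to be $C^{1}$.
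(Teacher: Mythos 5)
Your argument is exactly the paper's: apply the modulus-of-continuity characterization \eqref{besov_2}, use H\"older continuity of $f$ pointwise to pass from $\|f(u(\cdot+t))-f(u(\cdot))\|_{L^p}$ to $\|u(\cdot+t)-u(\cdot)\|_{L^{p\alpha}}^{\alpha}$, factor $|t|^{-s}=(|t|^{-s/\alpha})^{\alpha}$, pull the power $\alpha$ outside the supremum, and recognize the result via \eqref{besov_2} again. This matches the paper's proof step for step, so there is nothing further to add.
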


\begin{proof}
Let $u\in \dot{B}^\frac{s}{\alpha}_{p\alpha,\infty}$ be given.  Then by \eqref{besov_2} we have the inequality,
\begin{align*}
\lVert f(u)\rVert_{\dot{B}^s_{p,\infty}}&\lesssim \sup_{t\in\mathbb{R}^d} \left[ |t|^{-s}\left(\int_{\mathbb{R}^d}\left|f(u(x+t))-f(u(x))\right|^pdx\right)^\frac{1}{p}\right]\\
&\lesssim \sup_{t\in\mathbb{R}^d} \left[|t|^{-s}\left(\int_{\mathbb{R}^d}|u(x+t)-u(x)|^{\alpha p}dx\right)^\frac{1}{p}\right]\\
&=\sup_{t\in\mathbb{R}^d}\left(|t|^{-\frac{s}{\alpha}}\lVert u(x+t)-u(x)\rVert_{L^{p\alpha}}\right)^\alpha\\
&\leq \left(\sup_{t\in\mathbb{R}^d} |t|^{-\frac{s}{\alpha}}\lVert u(x+t)-u(x)\rVert_{L^{p\alpha}}\right)^\alpha\\
&\sim \lVert u\rVert_{\dot{B}^\frac{s}{\alpha}_{p\alpha,\infty}}^\alpha
\end{align*}
where in the second inequality we have used the H\"older continuity of $f$.
\end{proof}

\subsection{Function Spaces}

For dimensions $d\geq 6$ and any time interval $I\subset \R$, we introduce the following norms:
\begin{align}
\nonumber \|u\|_{S(I)}  &= \|u \|_{L_{t,x}^{\frac{2(d+1)}{d-2}}(I\times\R^d)}, \\
\intertext{
\begin{math} \nonumber  \| u \|_{\dot S^1(I)} = \sup \Big\{\|u\|_{L_t^q \dot B^{1-\beta(r)}_{r,2}(I\times\R^d)},\|\partial_t u\|_{L_t^q \dot B^{-\beta(r)}_{r,2}(I\times\R^d)}:(q,r)\,\textrm{wave-admissible}\Big\} \end{math}
}
\nonumber \| u\|_{W(I)} &= \| u\|_{L_t^{\frac{2(d+1)}{d-1}}\dot B^{\frac 12}_{\frac{2(d+1)}{d-1},2} (I\times \R^d) }, \\
\label{norms} \| u\|_{W^\prime (I)} &= \| u\|_{L_t^{\frac{2(d+1)}{d+3}}\dot B^{\frac 12}_{\frac{2(d+1)}{d+3},2} (I\times \R^d) },\\
\nonumber  \| u \|_{X(I)} &=\|u\|_{ L_t^{\frac {d^2+d}{d+2}} \dot H^{\frac 2{d}, \frac {2(d+1)} {d-1} }(I \times \R^d)}, \\
\nonumber \| u\|_{X^\prime(I)}  &= \|u\|_{L_t^{\frac{d^2+d}{3d+2}} \dot H^{\frac 2{d}, \frac {2(d+1)} {d+3} }(I\times \R^d)},\\
\nonumber \|u\|_{Y(I)}  &= \|u\|_{L_t^{\frac{2d^3-7d^2-9d}{d^3-6d^2+7d-2}} \dot H^{\frac{d^2-4d-2}{2d^2-9d}, \frac{4d^3-14d^2-18d}{ 2d^3-11d^2+11d-8}}(I\times \R^d)}.
\end{align}
\begin{rem}\label{rem1}We stress here that the Strichartz space
$\dot S^1$ is defined in terms of Besov spaces. Choosing the working space as a Besov space allows us to
bound the fractional derivative of the difference of the nonlinear term (see Lemma \ref{lem4aa}).
Although Besov spaces are stronger than Sobolev spaces when $p>2$,
Lemma \ref{lem7a} shows that the boundedness of the Sobolev norms of near solutions implies the boundedness of the Besov norms.
Therefore with the help of Lemma \ref{lem7a},
our main theorem (Theorem \ref{thm_long2}) can be proved in the pure Sobolev setting.
\end{rem}

As a consequence of interpolation, we identify the following relationships between the norms defined above in ($\ref{norms}$) and the standard Strichartz spaces.

\begin{lem}[Interpolations] \label{lem3}
Let $d\ge 6$ and $I\subset\mathbb{R}$ be any time interval. Then we have the following inequalities:

\begin{enumerate}
\item[(a)]
\begin{align*}
  \| u\|_X & \lesssim \| u\|_{L_{t,x}^{\frac{2(d+1)}{d-2}}}^{\theta_1 } \cdot
      \|u \|^{1-\theta_1 }_{ L_t^\infty \dot H^{ \frac{2d-4}{d^2-4d-4}, \frac{2d^2-8d-8}{d^2-6d+8} }} \\
  & \lesssim  \| u\|_{S}^{\theta_1}
\cdot \| u\|_{L_t^\infty \dot H^1}^{1-\theta_1},
 \end{align*}
where $\theta_1 = \frac {2d+4}{d^2-2d} $.

\item[(b)]
\begin{align*}
 \| u\|_{S} &\lesssim \|u \|_X^{\theta_2} \cdot
  \| u\|_{L_t^{\frac{2(d+1)}{d-1}} \dot H^{\frac 12,\frac{2(d+1)}{d-1}}}^{1-\theta_2} \\
&\lesssim \|u \|_X^{\theta_2} \cdot
  \| u\|_{W}^{1-\theta_2}
\end{align*}
where $\theta_2 = \frac{d}{d^2-3d-4}$.

\item[(c)]
\begin{align*}
 \| u \|_{L_t^{\frac{4(d+1)}{d-2}} L_x^{\frac{4(d^2+d)}{2d^2-3d-2}}}
&\lesssim
\|u\|_{X}^{\theta_3} \cdot \| u\|_{L_t^{\frac{2(d+1)}{d-1}} \dot H^{\frac 12, \frac{2(d+1)}{d-1}} }^{1-\theta_3} \\
&\lesssim
\|u\|_{X}^{\theta_3} \cdot \| u\|_{W }^{1-\theta_3}
\end{align*}
where $\theta_3 = \frac {d^2}{ 2(d^2-3d-4)}$.

\item[(d)]
\begin{align*}
 \|u \|_{L_t^{\frac {2(d+1)}{d-2}} \dot H^{\frac 12, \frac{2(d^2+d)}{d^2-d+1}}}
& \lesssim \|u \|_X^{\theta_4} \cdot
  \| u
\|^{1-\theta_4}_{Y}
\\
& \lesssim \|u \|_X^{\theta_4} \cdot \|u\|_{\dot S^1}^{1-\theta_4},
\end{align*}
where $\theta_4 = \frac1 {2(d-4)}$.

\item[(e)]
We also have the embedding
\begin{align*}
 \dot S^1 \hookrightarrow L_t^{\frac{d^2+d}{d+2} } \dot H^{\frac{d^2-2d-2}{d^2-d}, \frac{2d^3-2d}{d^3-5d-8}}
\hookrightarrow X.
\end{align*}
\end{enumerate}
\end{lem}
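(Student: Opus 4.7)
The plan is to prove each of (a)--(d) by a two-step procedure and (e) by a Besov-to-Sobolev embedding followed by a spatial Sobolev embedding. In every case the basic tool is complex interpolation of spatial Sobolev norms combined with H\"older's inequality in time; the second inequality in each of (a)--(d) then follows from a straightforward Sobolev (or Besov-to-Sobolev) embedding that lets one replace the high-regularity spatial factor by either $\|u\|_{L^\infty_t \dot H^1_x}$, $\|u\|_W$, or $\|u\|_{\dot S^1}$.

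For part (a) I would first verify the consistency of the convex combination in the triple $(s,1/p,1/q)$: writing the triple corresponding to $X$ as $\theta_1$ times the triple of $S=L^{2(d+1)/(d-2)}_{t,x}$ plus $(1-\theta_1)$ times the triple of $L^\infty_t \dot H^{(2d-4)/(d^2-4d-4),\,(2d^2-8d-8)/(d^2-6d+8)}_x$, with $\theta_1=(2d+4)/(d^2-2d)$, and checking the three resulting rational identities in $d$. I would then apply complex interpolation of $\dot H^{s,p}_x$ pointwise in $t$ (Stein--Weiss / Bergh--L\"ofstr\"om) and close with H\"older in $t$ with exponents $1/\theta_1$ and $\infty$. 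The second inequality in (a) comes from the spatial Sobolev embedding $\dot H^1_x \hookrightarrow \dot H^{s_1,p_1}_x$, which reduces to checking $1-d/2 = s_1 - d/p_1$ together with $p_1 \ge 2$.

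Parts (b), (c), (d) follow the same recipe with their own $\theta_i$: in each one of the two endpoint spaces is a low-regularity/high-integrability space ($X$ or $S$), and the other is a Strichartz space of Besov or Sobolev type at the "$W$-level." To pass to the second inequality I would use the embedding $\dot B^{1/2}_{2(d+1)/(d-1),2} \hookrightarrow \dot H^{1/2,\,2(d+1)/(d-1)}_x$, which holds because $2(d+1)/(d-1) \ge 2$, and in case (d) the embedding $\dot S^1 \hookrightarrow Y$, obtained by selecting the wave-admissible pair $(q,r)$ that matches the time and spatial exponents of $Y$ and then invoking the same Besov-to-Sobolev inclusion. For (e), the first embedding picks the wave-admissible $(q,r)$ with $q=(d^2+d)/(d+2)$, combines $\|u\|_{L^q_t \dot B^{1-\beta(r)}_{r,2}} \le \|u\|_{\dot S^1}$ with $\dot B^{1-\beta(r)}_{r,2} \hookrightarrow \dot H^{1-\beta(r),r}_x$, and the second embedding is a pure spatial Sobolev embedding into $X$, for which one checks the scaling equality and the monotonicity of $p$.

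The hard part is not conceptual but combinatorial: the exponents defining $X$, $Y$, $W$, and the wave-admissible pair that realizes $Y$ are complicated rational functions of $d$, and each interpolation/Sobolev identity must be verified directly. I expect this bookkeeping---keeping the three scalings (regularity, spatial integrability, time integrability) simultaneously consistent across five statements---to absorb essentially all the work, but no individual step is delicate once the correct $\theta_i$ has been identified, and the restriction $d \ge 6$ appears only to ensure that the spatial exponents are meaningful (for instance, $d-6 \ge 0$ in the denominators appearing in (c) and (e)) and that $p_i \ge 2$ wherever Sobolev embedding is invoked.
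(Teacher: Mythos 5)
The paper itself gives no proof of Lemma \ref{lem3}; the lemma appears immediately after the function-space definitions with only the remark that it is ``a consequence of interpolation,'' so there is no argument of the authors to compare yours against. Your route---verifying that each target triple $(s,1/q,1/r)$ is the $\theta_i$-weighted combination of the two endpoint triples, then applying complex interpolation of $\dot H^{s,p}_x$ pointwise in time, H\"older in $t$, and a Sobolev or Besov-to-Sobolev embedding to pass to the second line in each item---is exactly the standard one the authors evidently have in mind, and I have checked that the arithmetic closes for (a), (b), (d), and (e): in particular the wave-admissible pair realizing $Y$ is $(q,r)=\bigl(\tfrac{2d^3-7d^2-9d}{d^3-6d^2+7d-2},\tfrac{4d^3-14d^2-18d}{2d^3-11d^2+11d-8}\bigr)$ and one has $r\ge 2$ and $1-\beta(r)$ equal to the Sobolev exponent of $Y$, as your argument requires.

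There is, however, a genuine gap in the proposed treatment of part (c), and it is one that your proof method cannot circumvent. You would need $\theta_3\in(0,1)$ to apply complex interpolation, but with the paper's $\theta_3=\tfrac{d^2}{2(d^2-3d-4)}$ one gets $\theta_3=\tfrac{9}{7}>1$ at $d=6$ and $\theta_3=\tfrac{49}{48}>1$ at $d=7$ (it drops below $1$ only for $d\ge 8$). This is not a removable defect of the particular weight: the time exponent of the target space is $\tfrac{1}{q}=\tfrac{d-2}{4(d+1)}$, and at $d=6$ this equals $\tfrac{1}{7}$, which lies strictly below both $\tfrac{1}{q_X}=\tfrac{4}{21}$ and $\tfrac{1}{q_W}=\tfrac{5}{14}$, so no convex combination of the $X$ and $W$ time exponents can reach it. Writing the claimed inequality with $1-\theta_3<0$ amounts to a reverse-interpolation estimate $\|u\|_X\gtrsim\|u\|_{L_t^{4(d+1)/(d-2)}L_x^{r}}^{1/\theta_3}\,\|u\|_W^{1-1/\theta_3}$, which does not follow from H\"older and complex interpolation and is not something your outline establishes. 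You should therefore not assert, as you do, that ``the restriction $d\ge 6$ appears only to ensure that the spatial exponents are meaningful'': the weight in (c) itself falls outside $(0,1)$ precisely in the low end of that range. In fairness to you, part (c) is never invoked anywhere in the rest of the paper (only (a), (b), (d), (e) are cited in the proofs of Theorem \ref{lwpthm}, Lemma \ref{lem4a}, and Section \ref{sec-perturbation}), so this looks like a defect in the lemma's statement rather than in its applications, but a proof attempt for Lemma \ref{lem3} as stated must either discover this or restrict (c) to $d\ge 8$.
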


\subsection{Strichartz Estimates}

We state the Strichartz estimates for the wave equation, which we frequently use throughout the paper (see for instance \cite{GinibreVelo}, \cite{KeelTao}, \cite{liso95}).

\begin{lem}[Strichartz] \label{lem1}
 Let the pairs $(q_i,r_i)$, $i=1,2$, satisfy
\begin{align*}
\frac 2 {q_i} &= (d-1) (\frac 12 -\frac 1 {r_i}),\\
2\leq q_i,\, r_i\le \infty, &\quad\textrm{and}\quad (q_i,r_i,d)\ne (2,\infty, 3),
\end{align*}
and let $u$ satisfy
\begin{align*}
\left\lbrace\begin{array}{ll}u_{tt}-\Delta u = f, \\ u(0)=u_0\in \dot{H}^1, \\ u_t(0)=u_1\in L^2.\end{array}\right.
\end{align*}
Then
\begin{align*}
 \| u\|_{L_t^{q_1} \dot B^{1-\beta(r_1)}_{r_1,2}} + \| \partial_t u \|_{L_t^{q_1} \dot B^{-\beta(r_1)}_{r_1,2}}
 \lesssim
\|u_0\|_{\dot H^1} + \|u_1\|_{L^2} + \|f\|_{L_t^{q^\prime_2} \dot B^{\beta(r_2)}_{r^\prime_2,2}},
\end{align*}
where $\beta(r_i) = \frac {d+1} 2 (\frac 12 -\frac 1 {r_i})$ and $\frac{1}{q_2}+\frac{1}{q'_2}=\frac{1}{r_2}+\frac{1}{r'_2}=1$.
\end{lem}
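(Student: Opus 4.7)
The plan is to derive the Besov-valued Strichartz estimate from the classical Lebesgue/Sobolev-form Strichartz estimates for the wave equation (as in \cite{GinibreVelo,KeelTao,liso95}) by a standard frequency-by-frequency argument. Recall that for wave-admissible pairs, the scaling relation $\frac{2}{q}=(d-1)(\tfrac{1}{2}-\tfrac{1}{r})$ forces the derivative index in the homogeneous estimate $\|e^{\pm it\sqrt{-\Delta}}f\|_{L^q_tL^r_x}\lesssim \||\nabla|^{\beta(r)}f\|_{L^2}$ to be exactly $\beta(r)=\frac{d+1}{2}(\tfrac{1}{2}-\tfrac{1}{r})$. Using Duhamel, this upgrades to the classical inhomogeneous estimate
\begin{equation*}
\|u\|_{L_t^{q_1}L_x^{r_1}}\lesssim \||\nabla|^{\beta(r_1)}u_0\|_{L^2}+\||\nabla|^{\beta(r_1)-1}u_1\|_{L^2}+\||\nabla|^{\beta(r_1)+\beta(r_2)-1}f\|_{L_t^{q_2'}L_x^{r_2'}}.
\end{equation*}

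First, I would apply the Littlewood-Paley projector $\Delta_j$ to the equation; since $\Delta_j$ commutes with the wave evolution, $\Delta_j u$ solves the same equation with data $(\Delta_j u_0,\Delta_j u_1)$ and forcing $\Delta_j f$. Applying the classical estimate above to $\Delta_j u$ and using Bernstein \eqref{bern} to replace $\||\nabla|^s\Delta_j g\|_{L^p}$ by $2^{js}\|\Delta_j g\|_{L^p}$ yields
\begin{equation*}
\|\Delta_j u\|_{L^{q_1}_tL^{r_1}_x}\lesssim 2^{j\beta(r_1)}\|\Delta_j u_0\|_{L^2}+2^{j(\beta(r_1)-1)}\|\Delta_j u_1\|_{L^2}+2^{j(\beta(r_1)+\beta(r_2)-1)}\|\Delta_j f\|_{L^{q_2'}_tL^{r_2'}_x}.
\end{equation*}
Multiplying through by $2^{j(1-\beta(r_1))}$ produces weights of $2^j,\ 1,\ 2^{j\beta(r_2)}$ on the three terms on the right, which are exactly what we want for the $\dot H^1$, $L^2$, and $\dot B^{\beta(r_2)}_{r_2',2}$ norms. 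An analogous computation using the identity $\partial_t e^{\pm it\sqrt{-\Delta}}=\pm i\sqrt{-\Delta}\,e^{\pm it\sqrt{-\Delta}}$ (costing one extra factor of $2^j$) gives the weighted estimate for $\Delta_j\partial_t u$ with weight $2^{-j\beta(r_1)}$ on the left.

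Finally I would take the $\ell^2_j$-norm over dyadic frequencies and apply Minkowski's integral inequality to move $\ell^2_j$ inside $L^{q_1}_t L^{r_1}_x$ on the left (legal since $q_1,r_1\ge 2$, so $L_t^{q_1}(\ell^2_j)\le \ell^2_j(L_t^{q_1})$), and to pass from $\ell^2_j(L_t^{q_2'}L_x^{r_2'})$ to $L_t^{q_2'}\dot B^{\beta(r_2)}_{r_2',2}$ on the right (legal since $q_2'\le 2\le r_2'/(r_2'-1)'$, equivalently $q_2\ge 2$). Using the Littlewood-Paley characterization of $\dot H^1$ and $L^2$ to collapse the frequency sums on the data terms completes the proof. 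The endpoint exclusion $(q_i,r_i,d)\ne(2,\infty,3)$ enters exactly where the underlying Lebesgue-form Strichartz estimate would fail. The only real subtlety is keeping the two Minkowski inequality directions straight — the inequality on the left-hand side and the reverse inequality on the right-hand side both go the favorable way precisely because of the constraint $q_1,q_2\ge 2$, so no obstacle arises beyond this bookkeeping.
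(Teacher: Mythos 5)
The paper does not prove this lemma; it states it with citations to \cite{GinibreVelo}, \cite{KeelTao}, \cite{liso95}, so there is no in-paper argument to compare against. Your derivation is correct and is the standard way to pass from the Lebesgue-form wave Strichartz estimates to the Besov form: apply the classical estimate to the frequency-localized equation for $\Delta_j u$, use Bernstein on the localized pieces to replace $|\nabla|^s\Delta_j$ by $2^{js}\Delta_j$, multiply by $2^{j(1-\beta(r_1))}$, and sum in $\ell^2_j$. The Minkowski interchanges are handled in the right directions: $L_t^{q_1}(\ell^2_j)\le\ell^2_j(L_t^{q_1})$ needs $q_1\ge 2$ on the solution side, and $\ell^2_j(L_t^{q_2'})\le L_t^{q_2'}(\ell^2_j)$ needs $q_2'\le 2$ (i.e.\ $q_2\ge 2$) on the forcing side, both of which hold by admissibility. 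One small cleanup: the condition you wrote as ``$q_2'\le 2\le r_2'/(r_2'-1)'$'' is garbled and in any case unnecessary — the spatial exponent $r_2'$ never enters the Minkowski step, since the $\ell^2_j$ sum already sits outside $L^{r_2'}_x$ in both the intermediate quantity and the target Besov norm; only $q_2'\le 2$ is needed.
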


Now, we record the following decay estimate (see \cite{GinibreVelo}).
\begin{lem}[Decay estimate for $\frac {\sin (t \sqrt{-\Delta}) }{\sqrt{-\Delta}}$] \label{lem2}
 We have
\begin{align*}
 \left\| \frac {\sin (t\sqrt{-\Delta})} {\sqrt{-\Delta}} f \right\|_{\dot B^{1-\beta(r)}_{r,2}}
\lesssim |t|^{-\gamma(r)} \| f\|_{\dot B^{\beta(r)}_{r^\prime,2}},
\end{align*}
where $0\le \gamma(r) = (d-1)(\frac 12 -\frac 1r) \le 1$.
\end{lem}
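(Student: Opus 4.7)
\emph{Plan.} The plan is to reduce the Besov decay estimate to a frequency-localized $L^{r'}\to L^r$ dispersive bound on a single Littlewood--Paley block, and then recover the full statement by taking $\ell^2$-sums in the frequency index $j$ using the definition of Besov norms in terms of $\Delta_j$.

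First I would record the two single-block endpoint bounds. At frequency $|\xi|\sim 2^j$, the classical stationary phase analysis of the half-wave kernel, combined with the extra factor $2^{-j}$ coming from the multiplier $(\sqrt{-\Delta})^{-1}$ localized on this block, yields the $L^1\to L^\infty$ dispersive estimate
\[
\left\| \frac{\sin(t\sqrt{-\Delta})}{\sqrt{-\Delta}} \Delta_j f \right\|_{L^\infty} \lesssim 2^{j\frac{d-1}{2}}\,|t|^{-\frac{d-1}{2}}\,\|\Delta_j f\|_{L^1},
\]
while Plancherel gives the trivial energy-level bound
\[
\left\| \frac{\sin(t\sqrt{-\Delta})}{\sqrt{-\Delta}} \Delta_j f \right\|_{L^2} \lesssim 2^{-j}\,\|\Delta_j f\|_{L^2}.
\]

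Next, for $r\in[2,\infty]$ with dual exponent $r'$, I would interpolate these two bounds by Riesz--Thorin with parameter $\theta=\tfrac{2}{r}$, producing
\[
\left\| \frac{\sin(t\sqrt{-\Delta})}{\sqrt{-\Delta}} \Delta_j f \right\|_{L^r} \lesssim 2^{j(2\beta(r)-1)}\,|t|^{-\gamma(r)}\,\|\Delta_j f\|_{L^{r'}},
\]
once one checks the short arithmetic identities $\tfrac{d-1}{2}(1-\theta)=\gamma(r)$ and $\tfrac{d-1}{2}(1-\theta)-\theta=\tfrac{d-1}{2}-\tfrac{d+1}{r}=2\beta(r)-1$. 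Multiplying by $2^{j(1-\beta(r))}$, squaring, and summing in $j\in\Z$, the key relation $1-\beta(r)+(2\beta(r)-1)=\beta(r)$ converts the frequency weights into precisely the desired Besov norms:
\[
\left\| \frac{\sin(t\sqrt{-\Delta})}{\sqrt{-\Delta}} f \right\|_{\dot B^{1-\beta(r)}_{r,2}}^2 \lesssim |t|^{-2\gamma(r)} \sum_{j\in\Z} 2^{2j\beta(r)} \|\Delta_j f\|_{L^{r'}}^2 = |t|^{-2\gamma(r)}\,\|f\|_{\dot B^{\beta(r)}_{r',2}}^2.
\]

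The only substantive ingredient is the kernel estimate in the first step, which is classical and explicitly recorded in \cite{GinibreVelo}; everything after that is routine bookkeeping in the Littlewood--Paley calculus. I expect the only delicate point to be the stationary phase derivation of the single-block $L^1\to L^\infty$ bound, in particular tracking the correct power of $2^j$ after accounting for both the Fourier localization and the multiplier $(\sqrt{-\Delta})^{-1}$; once that is in hand, the Riesz--Thorin step and the $\ell^2$-summation fit together exactly because the exponent shifts $1-\beta(r)$, $2\beta(r)-1$, and $\beta(r)$ form a telescoping sum.
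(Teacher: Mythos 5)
The paper does not prove Lemma \ref{lem2}; it states the estimate and cites \cite{GinibreVelo} for it. Your blind proof is a correct and self-contained derivation via the standard route: the frequency-localized $L^1\to L^\infty$ stationary-phase bound (with the extra $2^{-j}$ from $(\sqrt{-\Delta})^{-1}$), the trivial $L^2\to L^2$ bound $2^{-j}$, Riesz--Thorin with $\theta=2/r$, and $\ell^2$-summation in $j$ using the Littlewood--Paley characterization of Besov norms. The exponent arithmetic $\tfrac{d-1}{2}(1-\theta)=\gamma(r)$, $\tfrac{d-1}{2}(1-\theta)-\theta=2\beta(r)-1$, and the telescope $(1-\beta(r))+(2\beta(r)-1)=\beta(r)$ all check out, and the constraint $\gamma(r)\le 1$ keeps $r$ away from the problematic $L^\infty$ endpoint. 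This is precisely the argument one finds in the cited reference, so there is nothing to contrast.
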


As a consequence of Lemma \ref{lem2}, we prove a Strichartz estimate establishing a connection between the spaces $X(I)$ and $X'(I)$.  This estimate will be essential for obtaining appropriate estimates of the nonlinear term.

\begin{lem}[Exotic Strichartz in $X$] \label{lem4}
Let $0\in I$ be a time interval. Then,
\begin{align}
 \left \| \int_0^t \frac {\sin((t-\tau) \sqrt{-\Delta} )} { \sqrt{-\Delta}} f(\tau) d\tau \right\|_{X(I)}
\lesssim \|  f \|_{X^\prime (I)}. \label{exotic}
\end{align}

\end{lem}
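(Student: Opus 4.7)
My plan is to derive a pointwise-in-time dispersive estimate at the regularity level $s=\frac{2}{d}$ and then promote it to the full space-time bound \eqref{exotic} via a one-dimensional Hardy-Littlewood-Sobolev inequality in time.

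First I would specialize Lemma \ref{lem2} to $r=\frac{2(d+1)}{d-1}$, whose conjugate is $r'=\frac{2(d+1)}{d+3}$ and for which $\beta(r)=\tfrac12$ and $\gamma(r)=\tfrac{d-1}{d+1}$. Since $|\nabla|^{\sigma}$ commutes with $\frac{\sin(t\sqrt{-\Delta})}{\sqrt{-\Delta}}$, a derivative shift of order $\frac{2}{d}-\frac{1}{2}$ upgrades Lemma \ref{lem2} to
\begin{align*}
\left\| \tfrac{\sin(t\sqrt{-\Delta})}{\sqrt{-\Delta}} f \right\|_{\dot B^{2/d}_{r,2}} \lesssim |t|^{-\gamma(r)} \|f\|_{\dot B^{2/d}_{r',2}}.
\end{align*}
For $d\ge 6$ one has $r>2>r'$, so the Besov-Triebel-Lizorkin embeddings $\dot B^{2/d}_{r,2}\hookrightarrow \dot H^{2/d,r}$ and $\dot H^{2/d,r'}\hookrightarrow \dot B^{2/d}_{r',2}$ go in compatible directions and deliver the pointwise-in-time dispersive estimate
\begin{align*}
\left\| \tfrac{\sin(t\sqrt{-\Delta})}{\sqrt{-\Delta}} f \right\|_{\dot H^{2/d,r}} \lesssim |t|^{-\gamma(r)} \|f\|_{\dot H^{2/d,r'}}.
\end{align*}

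Next, writing $U(\tau)=\frac{\sin(\tau\sqrt{-\Delta})}{\sqrt{-\Delta}}$, I would apply Minkowski's inequality in the spatial variable and dominate the truncated integral by its untruncated, positive counterpart to obtain
\begin{align*}
\left\| \int_0^t U(t-\tau) f(\tau)\, d\tau \right\|_{\dot H^{2/d,r}_x} \lesssim \int_\R |t-\tau|^{-\gamma(r)} \|f(\tau)\|_{\dot H^{2/d,r'}_x}\mathbf{1}_I(\tau)\, d\tau.
\end{align*}
With $q=\frac{d^2+d}{d+2}$ and $\tilde q=\frac{d^2+d}{3d+2}$, a direct arithmetic check gives $\frac{1}{\tilde q}-\frac{1}{q}=\frac{2}{d+1}=1-\gamma(r)$, and the conditions $1<\tilde q<q<\infty$ hold for $d\ge 6$. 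The one-dimensional Hardy-Littlewood-Sobolev inequality applied to the convolution kernel $|t|^{-\gamma(r)}$ then furnishes
\begin{align*}
\left\| \int_0^t U(t-\tau) f(\tau)\, d\tau \right\|_{L^q_t \dot H^{2/d,r}_x(I\times\R^d)} \lesssim \|f\|_{L^{\tilde q}_t \dot H^{2/d,r'}_x(I\times\R^d)} = \|f\|_{X'(I)},
\end{align*}
which is precisely \eqref{exotic}.

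The only step with genuine content is the first: lifting the Besov-level dispersive estimate to the Sobolev setting hinges on both embeddings pointing in compatible directions, which is exactly ensured by the strict inequalities $r>2>r'$ for $d\ge 6$. Once this has been established, matching the HLS exponents to the dispersion rate $\gamma(r)$ is a purely algebraic verification, and the truncation $\int_0^t$ is absorbed into the untruncated convolution by pointwise positivity, with no need to invoke Christ-Kiselev.
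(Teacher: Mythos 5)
Your proposal is correct and follows exactly the route the paper takes: the paper's one-line proof invokes the decay estimate (Lemma \ref{lem2}) together with Hardy--Littlewood--Sobolev in time, which is precisely what you carry out. You have merely filled in the details that the paper leaves implicit --- the derivative shift to level $\tfrac{2}{d}$, the verification that the Besov--Sobolev embeddings go the right way because $r>2>r'$, the exponent arithmetic $\tfrac{1}{\tilde q}-\tfrac{1}{q}=1-\gamma(r)$, and the observation that the truncated Duhamel integral is dominated by the untruncated positive kernel so no Christ--Kiselev argument is needed --- and all of these checks are sound.
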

\begin{proof}
The inequality \eqref{exotic} follows directly from the decay estimate (Lemma \ref{lem2}) and
the Hardy-Littlewood-Sobolev inequality in time.
\end{proof}

\subsection{Nonlinear Estimates}

In many of our arguments, we will require estimates on the nonlinearity.  To obtain these estimates, our main tools will be several facts from
fractional calculus.

\begin{lem}[Fractional Leibniz rule \cite{ChristWeinstein}]
\label{lem8}
Let $s\in (0,1]$ and $1<r,p_1,p_2,q_1,q_2<\infty$ be given such that $\frac{1}{r}=\frac{1}{p_i}+\frac{1}{q_i}$ for $i=1,2$.  Then there exists $C>0$ such that,
\begin{align*}
\lVert \abs{\nabla}^s(fg)\rVert_{L^r}&\leq C\lVert f\rVert_{L^{p_1}}\lVert \abs{\nabla}^sg\rVert_{L^{q_1}}+\lVert \abs{\nabla}^sf\rVert_{L^{p_2}}\lVert g\rVert_{L^{q_2}}.
\end{align*}
\end{lem}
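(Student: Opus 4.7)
The plan is to prove this via Bony's paraproduct decomposition. Write
\[
fg = \Pi_1(f,g) + \Pi_2(f,g) + \Pi_3(f,g),
\]
where $\Pi_1(f,g) = \sum_{k} (\Delta_{<k-2} f)(\Delta_k g)$ is the low--high paraproduct, $\Pi_2(f,g) = \sum_{k} (\Delta_k f)(\Delta_{<k-2}g)$ is the high--low paraproduct, and $\Pi_3(f,g)=\sum_{k} (\Delta_k f)(\widetilde\Delta_k g)$ with $\widetilde\Delta_k=\sum_{|j-k|\le 2}\Delta_j$ is the diagonal remainder. I aim to show that $|\nabla|^s\Pi_1$ is controlled by $\|f\|_{L^{p_1}}\||\nabla|^s g\|_{L^{q_1}}$, that $|\nabla|^s\Pi_2$ is controlled by $\||\nabla|^s f\|_{L^{p_2}}\|g\|_{L^{q_2}}$, and that $|\nabla|^s\Pi_3$ can be absorbed into either of these two.

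For $\Pi_1$, the crucial observation is a Fourier-support property: each summand $(\Delta_{<k-2}f)(\Delta_k g)$ is spectrally localized in an annulus of radius $\sim 2^k$, so $|\nabla|^s$ essentially acts on it as multiplication by $2^{sk}$. This yields the pointwise bound
\[
|\Delta_j |\nabla|^s \Pi_1(f,g)(x)| \lesssim Mf(x)\cdot 2^{sj}|\widetilde\Delta_j g(x)|,
\]
where $M$ is the Hardy--Littlewood maximal function. Taking the $\ell^2$ square function in $j$, applying the Fefferman--Stein vector-valued maximal inequality (valid for $1<p_1<\infty$), H\"older's inequality with exponents $p_1$ and $q_1$, and the square-function identification $\bigl\|\bigl(\sum_j 2^{2sj}|\widetilde\Delta_j g|^2\bigr)^{1/2}\bigr\|_{L^{q_1}}\sim \||\nabla|^s g\|_{L^{q_1}}$ produces the $\Pi_1$ bound. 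The boundary case $p_1=\infty$ requires a small modification since Fefferman--Stein fails in that range: one instead uses $\|\Delta_{<k-2}f\|_{L^\infty}\leq\|f\|_{L^\infty}$ directly. The $\Pi_2$ estimate is handled identically by swapping the roles of $f$ and $g$.

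The main obstacle is the diagonal remainder $\Pi_3$, where the two frequencies are comparable and $|\nabla|^s$ cannot be naturally assigned to a single factor. Writing $G_k := \Delta_k f \cdot \widetilde\Delta_k g$, which has Fourier support in the ball $\{|\xi|\lesssim 2^k\}$ rather than in an annulus, one has $\Delta_j G_k=0$ once $j\geq k+C$, so $\Delta_j|\nabla|^s\Pi_3 = \sum_{k\geq j-C}\Delta_j|\nabla|^s G_k$. Using the pointwise estimate $|\Delta_j G_k(x)|\lesssim M(\Delta_k f)(x)\,M(\widetilde\Delta_k g)(x)$ together with the fact that $|\nabla|^s\Delta_j$ acts as $2^{sj}$ modulo a bounded Fourier multiplier, rewriting $2^{sj} = 2^{-s(k-j)}\cdot 2^{sk}$, and applying Cauchy--Schwarz in $k$ (using $\sum_{k\ge j-C}2^{-s(k-j)}=O_s(1)$ as $s>0$), followed by the $j$-sum, H\"older with exponents $p_2,q_2$, the Fefferman--Stein inequality, and the square-function characterization, produces
\[
\||\nabla|^s\Pi_3\|_{L^r}\lesssim \||\nabla|^s f\|_{L^{p_2}}\,\|g\|_{L^{q_2}},
\]
as required. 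Finally, the endpoint $s=1$ is recovered by writing $|\nabla|=\sum_\alpha R_\alpha\partial_\alpha$ with Riesz transforms $R_\alpha$, applying the classical product rule to each local derivative $\partial_\alpha$, and invoking the $L^r$-boundedness of $R_\alpha$ for $1<r<\infty$.
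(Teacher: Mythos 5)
The paper does not prove Lemma \ref{lem8}; it is cited directly from Christ--Weinstein, so there is no in-paper argument to compare against. Your Bony paraproduct strategy is a standard and essentially correct route to this estimate, and the treatment of $\Pi_1$ and $\Pi_2$ (frequency localization to an annulus so $|\nabla|^s$ acts as $2^{sk}$, a Peetre-type pointwise bound, Fefferman--Stein, H\"older, square function characterization) is sound.

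There is, however, a genuine gap in the $\Pi_3$ term. You invoke the pointwise bound
\begin{align*}
|\Delta_j G_k(x)| \lesssim M(\Delta_k f)(x)\, M(\widetilde\Delta_k g)(x), \qquad k \geq j - C,
\end{align*}
with constants implicitly uniform in $j$ and $k$, and this fails. The Peetre maximal inequality trick that justifies the analogous bound for $\Pi_1$ relies on the low-frequency factor $\Delta_{<k-2}f$ being band-limited to a ball of radius $\lesssim 2^j$, which matches the scale $2^{-j}$ of the convolution kernel of $\Delta_j$. In $\Pi_3$ the situation is reversed: both $\Delta_k f$ and $\widetilde\Delta_k g$ oscillate at scale $2^{-k}$, which is much finer than the kernel scale $2^{-j}$ when $k \gg j$, and no uniform pointwise factorization of $M(\Delta_k f\cdot\widetilde\Delta_k g)$ into $M(\Delta_k f)\cdot M(\widetilde\Delta_k g)$ is available (the Peetre argument produces a growing factor $\sim 2^{c(k-j)}$). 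The correct bound is simply $|\Delta_j G_k(x)|\lesssim M(G_k)(x) = M(\Delta_k f\cdot\widetilde\Delta_k g)(x)$, with the product kept inside $M$. One then proceeds exactly as you outline --- Cauchy--Schwarz in $k$ using $\sum_{k\ge j-C}2^{-s(k-j)}=O_s(1)$, the $\ell^2_j$ square function, and Fefferman--Stein for the scalar operator $M$ applied to $\{2^{sk}G_k\}_k$ --- arriving at $\|(\sum_k 2^{2sk}|\Delta_k f|^2|\widetilde\Delta_k g|^2)^{1/2}\|_{L^r}$, and only at this last stage does one separate the two factors, pointwise, via $(\sum_k 2^{2sk}|\Delta_k f|^2|\widetilde\Delta_k g|^2)^{1/2}\le(\sum_k 2^{2sk}|\Delta_k f|^2)^{1/2}\cdot\sup_k|\widetilde\Delta_k g|$, followed by H\"older with exponents $p_2,q_2$ and the bound $\|\sup_k|\widetilde\Delta_k g|\|_{L^{q_2}}\lesssim\|Mg\|_{L^{q_2}}\lesssim\|g\|_{L^{q_2}}$. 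This closes the estimate; the rest of your argument is fine. (The parenthetical about $p_1=\infty$ is superfluous, since the lemma only assumes $1<p_1<\infty$.)
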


\begin{lem}[$C^1$ fractional chain rule \cite{ChristWeinstein}]
\label{lem5}
Suppose $G\in C^1(\mathbb{C})$, $s\in (0,1]$, and $1<q,q_1,q_2<\infty$ are such that $\frac{1}{q}=\frac{1}{q_1}+\frac{1}{q_2}$.  Then
\begin{align*}
\lVert \abs{\nabla}^sG(u)\rVert_{L^q}\lesssim \lVert G'(u)\rVert_{L^{q_1}}\lVert \abs{\nabla}^su\rVert_{L^{q_2}}.
\end{align*}
\end{lem}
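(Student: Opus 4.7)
The plan is to reduce to $s\in(0,1)$ (the case $s=1$ being the classical pointwise chain rule $\nabla G(u)=G'(u)\nabla u$ followed by H\"older's inequality) and then to combine Littlewood--Paley analysis with the fundamental theorem of calculus. For $0<s<1$ and $1<q<\infty$ I would start from the square-function characterization
$$\||\nabla|^s G(u)\|_{L^q}\sim\Big\|\Big(\sum_{j\in\Z}(2^{js}|\Delta_j G(u)|)^2\Big)^{1/2}\Big\|_{L^q}.$$
Exploiting that the convolution kernel $\psi_j$ defining $\Delta_j$ has vanishing integral for $j\neq 0$, I rewrite
$$\Delta_j G(u)(x)=\int\psi_j(y)\bigl[G(u(x-y))-G(u(x))\bigr]dy,$$
and apply the fundamental theorem of calculus to obtain
$$G(u(x-y))-G(u(x))=(u(x-y)-u(x))\int_0^1 G'\!\bigl((1-\tau)u(x)+\tau u(x-y)\bigr)d\tau.$$

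The main technical obstacle is that the intermediate-point values $G'\!\bigl((1-\tau)u(x)+\tau u(x-y)\bigr)$ entering this representation are not a priori controlled by $G'\circ u$ at the two endpoints alone; $G'$ could well have a peak somewhere along the segment joining $u(x)$ and $u(x-y)$. To handle this I would split $u=u_{\leq j}+u_{>j}$ and, for $|y|\lesssim 2^{-j}$, freeze $G'$ at the smooth approximant $G'(u_{\leq j})$ up to a remainder that is controlled, via a covering argument in the interval $[0,1]$ and the continuity of $G'$, by the Hardy--Littlewood maximal function $M(G'(u))$; for $|y|\gtrsim 2^{-j}$ I would use the rapid decay of $\psi_j$ to absorb the resulting tail. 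The payoff is a schematic pointwise bound of the form
$$|\Delta_j G(u)(x)|\lesssim M(G'(u))(x)\cdot\int|\psi_j(y)||u(x-y)-u(x)|\,dy,$$
which is the heart of the proof and is essentially the argument of Christ--Weinstein.

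To close the estimate I would insert this bound into the square function, pull the $x$-only factor $M(G'(u))(x)$ outside, and recognize the remaining sum over $j$ as (bounded by) the analogous square-function characterization of $\||\nabla|^s u\|_{L^{q_2}}$. H\"older's inequality with $\tfrac 1q=\tfrac 1{q_1}+\tfrac 1{q_2}$, together with the $L^{q_1}$-boundedness of the Hardy--Littlewood maximal operator (here using $q_1>1$), then yields
$$\||\nabla|^s G(u)\|_{L^q}\lesssim \|M(G'(u))\|_{L^{q_1}}\,\||\nabla|^s u\|_{L^{q_2}}\lesssim \|G'(u)\|_{L^{q_1}}\,\||\nabla|^s u\|_{L^{q_2}},$$
as claimed. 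The only delicate step is the pointwise control of the intermediate-point values of $G'$ by the maximal function of $G'\circ u$; once that is in place, everything else is Littlewood--Paley bookkeeping plus standard maximal inequalities.
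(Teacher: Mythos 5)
The paper does not prove Lemma \ref{lem5}; it is cited to Christ--Weinstein without proof, so there is no ``paper's own proof'' to compare against. Your outline does capture the standard Littlewood--Paley framework behind this type of chain rule (zero-mean kernel, fundamental theorem of calculus, square-function estimates, a maximal function, and a final application of H\"older), but the step you yourself identify as the heart of the argument is where it breaks down.

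The schematic pointwise bound
$|\Delta_j G(u)(x)|\lesssim M(G'(u))(x)\int|\psi_j(y)||u(x-y)-u(x)|\,dy$
cannot be obtained by a ``covering argument plus continuity of $G'$'' for a general $G\in C^1$. The Hardy--Littlewood maximal function $M(G'(u))(x)$ is an \emph{average} of $G'\circ u$ over balls near $x$, whereas the term $\int_0^1 G'((1-\tau)u(x)+\tau u(x-y))\,d\tau$ evaluates $G'$ at values that $u$ need not attain near $x$ on a set of any appreciable measure (in $d\ge 2$ the intermediate value theorem is unavailable, and even in $d=1$ a strictly monotone $u$ attains each value on a null set). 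If $G'$ has a tall narrow peak at such an intermediate value, the left-hand side sees it but $M(G'(u))(x)$ does not, so no splitting into $u_{\le j}+u_{>j}$ or refinement of the covering can rescue this pointwise inequality. What actually makes the Christ--Weinstein argument work is a structural property of the nonlinearity used in the applications, namely $G(z)=|z|^{\alpha}z$, for which $|G'|$ is comparable to $|z|^{\alpha}$ and hence quasi-monotone: one then has the algebraic inequality $|G'((1-\tau)a+\tau b)|\lesssim |G'(a)|+|G'(b)|$, which replaces your maximal-function step. This produces two terms, $G'(u(x))\cdot(\text{oscillation of }u)$ and a genuine convolution $\int|\psi_j(y)||u(x-y)-u(x)|\,G'(u(x-y))\,dy$, each requiring its own treatment (pointwise factor for the first, a shifted/vector-valued maximal estimate for the second), rather than the single clean bound you propose. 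As stated for arbitrary $G\in C^1$, your key pointwise estimate is not true, and without invoking the power structure (or some comparable quasi-monotonicity of $G'$, or boundedness of $G'$ on the range of $u$) the argument has a genuine gap.
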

When $G$ fails to be $C^1$, but remains H\"older continuous, we have the following version of the chain rule.
\begin{lem}[$C^\alpha$ fractional chain rule \cite{Visan}]
\label{lem6}
Let G be a H\"older continuous function of order $0<\alpha<1$.  Then for every $0<s<\alpha$, $1<p<\infty$ and $\frac{s}{\alpha}<\sigma<1$, there exists $C>0$ such that
\begin{align*}
\lVert \abs{\nabla}^sG(u)\rVert_{L^p(\mathbb{R}^d)}\leq C\lVert |u|^{\alpha-\frac{s}{\sigma}}\rVert_{L^{p_1}(\mathbb{R}^d)}\lVert \abs{\nabla}^\sigma u\rVert_{L^{\frac{s}{\sigma}p_2}(\mathbb{R}^d)}^\frac{s}{\sigma}
\end{align*}
provided $\frac{1}{p}=\frac{1}{p_1}+\frac{1}{p_2}$ and $(1-\frac{s}{\alpha\sigma})p_1>1$.
\end{lem}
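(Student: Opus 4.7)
The plan is to reduce the estimate to a Littlewood--Paley square-function bound on $|\nabla|^s G(u)$ and then attack each dyadic piece with the H\"older continuity of $G$, exactly in the spirit of how Besov-space techniques were used in Lemma \ref{simplefact}. First, by the standard Littlewood--Paley characterization of $L^p$-based Sobolev spaces,
\begin{equation*}
\||\nabla|^s G(u)\|_{L^p(\R^d)}\;\sim\;\Bigl\|\Bigl(\sum_{j\in\Z} 2^{2sj}|\Delta_j G(u)|^2\Bigr)^{1/2}\Bigr\|_{L^p(\R^d)}.
\end{equation*}
Because the convolution kernel $\psi_j$ of $\Delta_j$ has mean zero, I rewrite
\begin{equation*}
\Delta_j G(u)(x)\;=\;\int_{\R^d}\psi_j(y)\bigl[G(u(x-y))-G(u(x))\bigr]\,dy,
\end{equation*}
so the whole game is to bound $|G(u(x-y))-G(u(x))|$ pointwise in a form that separates a ``size'' piece and a ``regularity'' piece matching the RHS of the lemma.

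The key algebraic trick is to factor the H\"older bound $|G(a)-G(b)|\lesssim |a-b|^\alpha$ as
\begin{equation*}
|G(a)-G(b)|\;\lesssim\;|a-b|^{\alpha-s/\sigma}\,|a-b|^{s/\sigma}\;\lesssim\;\bigl(|a|^{\alpha-s/\sigma}+|b|^{\alpha-s/\sigma}\bigr)\,|a-b|^{s/\sigma},
\end{equation*}
where the second inequality uses $0<\alpha-s/\sigma<1$. Applied with $a=u(x-y)$, $b=u(x)$, this isolates a factor that is controlled by $|u|^{\alpha-s/\sigma}$ (which will be placed in $L^{p_1}$) times $|u(x-y)-u(x)|^{s/\sigma}$. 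The latter difference is then bounded by a Hedberg-type pointwise inequality of the form
\begin{equation*}
|u(x-y)-u(x)|\;\lesssim\;|y|^\sigma\bigl[M(|\nabla|^\sigma u)(x-y)+M(|\nabla|^\sigma u)(x)\bigr],
\end{equation*}
valid for $0<\sigma<1$, which can be derived from the Morrey-type representation of fractional Sobolev functions.

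Inserting this into the integral for $\Delta_j G(u)$ and using the fast decay and scaling of $\psi_j$, the $y$-integral becomes a geometric sum that yields the pointwise bound
\begin{equation*}
2^{sj}|\Delta_j G(u)(x)|\;\lesssim\;\bigl[M(|u|^{\alpha-s/\sigma})(x)+|u(x)|^{\alpha-s/\sigma}\bigr]\cdot\bigl[M(|\nabla|^\sigma u)(x)\bigr]^{s/\sigma}
\end{equation*}
(uniformly in $j$ after carrying out the Littlewood--Paley sum in the square function). Taking $L^p$ norms, applying H\"older with exponents $p_1$ and $p_2$, and invoking the Hardy--Littlewood maximal inequality produces the claimed estimate. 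The condition $(1-s/(\alpha\sigma))p_1>1$ is exactly what is needed to ensure that the maximal operator on $|u|^{\alpha-s/\sigma}$ acts boundedly in the relevant $L^q$ space after tracking through the $\tfrac{s}{\sigma}$-powers.

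The main obstacle I anticipate is the rigorous derivation and summability of the pointwise estimate on $\Delta_j G(u)$: one has to split the $y$-integral into the regimes $|y|\lesssim 2^{-j}$ and $|y|\gtrsim 2^{-j}$, use a slightly different argument in each, and then verify that the resulting series in $j$ is summable uniformly in $x$. Everything else (square function characterization, H\"older, maximal inequality) is standard, and the precise numerology of the exponents in the statement is dictated by exactly this summation step.
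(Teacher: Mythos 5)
The paper does not prove Lemma \ref{lem6}; it records it as a known result and cites \cite{Visan} (see also Taylor, \emph{Tools for PDE}). So there is no in-paper argument to compare against, and I will evaluate your sketch on its own merits.

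Your list of ingredients is correct and in the right spirit: the Littlewood--Paley square-function characterization $\||\nabla|^s G(u)\|_{L^p}\sim\|(\sum_j 2^{2js}|\Delta_j G(u)|^2)^{1/2}\|_{L^p}$; the mean-zero representation of $\Delta_j G(u)$; the factorization $|G(a)-G(b)|\lesssim(|a|^{\alpha-s/\sigma}+|b|^{\alpha-s/\sigma})\,|a-b|^{s/\sigma}$ using subadditivity of $t\mapsto t^{\beta}$ for $0<\beta\le 1$; the Haj\l{}asz-type pointwise bound $|u(x-y)-u(x)|\lesssim|y|^\sigma\bigl(M(|\nabla|^\sigma u)(x-y)+M(|\nabla|^\sigma u)(x)\bigr)$; and the closing H\"older plus Hardy--Littlewood maximal step. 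All of this is sound.

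There is, however, a genuine gap at the step you flag as an ``anticipated obstacle,'' and the fix is not the one you propose. Inserting the pointwise bounds and integrating in $y$ against $\psi_j$ produces $\int|\psi_j(y)|\,|y|^s\,dy\sim 2^{-js}$ and nothing more: the $2^{-js}$ exactly cancels the $2^{js}$ prefactor, and the resulting estimate on $2^{js}|\Delta_j G(u)(x)|$ is \emph{uniform in $j$} with no extra decay. Splitting the $y$-integral at $|y|\sim 2^{-j}$, which you suggest, is precisely what yields that $2^{-js}$ factor; it cannot produce more. A uniform-in-$j$ bound controls $\sup_j 2^{js}\|\Delta_j G(u)\|_{L^p}=\|G(u)\|_{\dot B^s_{p,\infty}}$, but the target quantity $\||\nabla|^s G(u)\|_{L^p}$ is the Triebel--Lizorkin norm $\|G(u)\|_{\dot F^s_{p,2}}$, which needs the $\ell^2_j$ sum \emph{inside} the $L^p$ norm. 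Since $\dot F^s_{p,2}\hookrightarrow\dot B^s_{p,\infty}$ and the inclusion is strict, the Besov bound does not close the argument, and your parenthetical ``after carrying out the Littlewood--Paley sum in the square function'' is not justified by what precedes it. To obtain genuine $\ell^2_j$-summability one must introduce an additional mechanism for $j$-decay. The route of Visan (following Taylor) decomposes $u=P_{\le k}u+P_{>k}u$ at a $j$-dependent cutoff $k$, bounds $\Delta_j[G(u)-G(P_{\le k}u)]$ by the raw H\"older estimate $|G(u)-G(P_{\le k}u)|\lesssim|P_{>k}u|^\alpha$, bounds $\Delta_j G(P_{\le k}u)$ using mean-zero cancellation together with Bernstein on the frequency-localized $P_{\le k}u$, and then optimizes $k$, which produces a cone-type sum in $(j,k)$ that is $\ell^2_j$-summable. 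Alternatively, one could replace the single global maximal function $M(|\nabla|^\sigma u)$ by a scale-localized Haj\l{}asz-type characterization of $\dot F^s_{p,2}$ (with a family of gradients $g_k$ indexed by dyadic scales and controlled in $L^p(\ell^2_k)$). Either device supplies exactly the ingredient your sketch is missing.
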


We now prove the following lemma, which is an essential tool in obtaining the H\"older continuity of the nonlinearity in Strichartz spaces of
Besov type (see Section \ref{sec-perturbation} for more details).

\begin{lem} \label{lem4aa}
Let $\frac 1p = \frac 1{p_1} +\frac 1{p_2} = \frac 1 {p_3} + \frac 1{p_4}$. $1<p_i<\infty$, $i=1,2,3,4$.
Assume the function $F\in C^{1,\alpha}(\R,\R)$, $0<\alpha<1$.
Then
\begin{align} \label{es_0}
\| F(u)-F(v) \|_{\dot B^{\frac 12}_{p,2}}
\lesssim \| u-v\|_{\dot B^{\frac 12}_{p_1,2}} \cdot \| |u|^\alpha \|_{p_2}
+ \| |u-v|^\alpha \|_{p_3} \cdot \| v\|_{\dot B^{\frac 12}_{p_4,2}}.
\end{align}
\end{lem}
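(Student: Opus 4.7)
The plan is to work directly with the equivalent characterization of the Besov norm via finite differences given in \eqref{besov_1}, which for $s=\tfrac{1}{2}$, $q=2$ reads
$$\|g\|_{\dot B^{1/2}_{p,2}} \sim \biggl(\int_{\R^d} \frac{\|\tau_t g - g\|_{L^p}^2}{|t|^{d+1}}\,dt\biggr)^{1/2}, \qquad \tau_t g(x) := g(x+t).$$
Setting $g = F(u) - F(v)$ and $w = u-v$, the estimate reduces to a clean pointwise bound on the double increment
$$D(x,t) := F(u(x+t)) - F(u(x)) - F(v(x+t)) + F(v(x)).$$

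Using the fundamental theorem of calculus twice, I would write
$$F(\tau_t u) - F(u) = (\tau_t u - u)\, A(x,t), \qquad F(\tau_t v) - F(v) = (\tau_t v - v)\, B(x,t),$$
with $A(x,t) = \int_0^1 F'\bigl((1-\theta)u(x) + \theta u(x+t)\bigr)\, d\theta$ and $B$ defined analogously in terms of $v$. The key algebraic step is the decomposition
$$D = (\tau_t w - w)\, A + (\tau_t v - v)(A - B),$$
which is engineered so that the finite difference in $t$ falls only on $w$ in the first summand and only on $v$ in the second, matching the two terms on the right-hand side of \eqref{es_0}.

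For the pointwise bounds, the hypothesis $F \in C^{1,\alpha}$ together with the growth condition $|F'(z)| \lesssim |z|^\alpha$ (which holds for the energy-critical nonlinearity being treated) gives $|A(x,t)| \lesssim |u(x)|^\alpha + |u(x+t)|^\alpha$, since the argument of $F'$ is a convex combination of $u(x)$ and $u(x+t)$. The $\alpha$-H\"older regularity of $F'$, applied to the identity $\bigl((1-\theta)u + \theta \tau_t u\bigr) - \bigl((1-\theta) v + \theta \tau_t v\bigr) = (1-\theta) w + \theta \tau_t w$, yields $|A(x,t) - B(x,t)| \lesssim |w(x)|^\alpha + |w(x+t)|^\alpha$. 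Now applying H\"older's inequality in $x$ with the two exponent pairs $(p_1,p_2)$ and $(p_3,p_4)$, and using translation invariance to absorb $\||\tau_t u|^\alpha\|_{L^{p_2}} = \||u|^\alpha\|_{L^{p_2}}$ and $\||\tau_t w|^\alpha\|_{L^{p_3}} = \||w|^\alpha\|_{L^{p_3}}$, I obtain
$$\|D(\cdot,t)\|_{L^p} \lesssim \|\tau_t w - w\|_{L^{p_1}} \,\||u|^\alpha\|_{L^{p_2}} + \|\tau_t v - v\|_{L^{p_4}} \,\||w|^\alpha\|_{L^{p_3}}.$$

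To conclude, I would square, divide by $|t|^{d+1}$, integrate in $t$, take the square root, and invoke $(a+b)^{1/2} \le a^{1/2}+b^{1/2}$ before applying \eqref{besov_1} once more to recover $\|w\|_{\dot B^{1/2}_{p_1,2}}$ and $\|v\|_{\dot B^{1/2}_{p_4,2}}$. The main obstacle I anticipate is the decomposition of $D$ in the second paragraph: the splitting must be arranged so that each finite-difference factor ends up paired with the desired Lebesgue factor; a naive splitting would produce cross terms like $\|\tau_t u - u\|_{L^{p_1}}\,\||v|^\alpha\|_{L^{p_2}}$ that cannot be identified with either term on the right-hand side of \eqref{es_0}.
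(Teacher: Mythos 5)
Your argument is correct and coincides essentially step for step with the paper's own proof: both use the finite-difference characterization \eqref{besov_1} of the Besov norm, apply the fundamental theorem of calculus to write $F(\tau_t u)-F(u)=(\tau_t u-u)\,A$ and $F(\tau_t v)-F(v)=(\tau_t v-v)\,B$, perform the identical rearrangement $D=(\tau_t w-w)A+(\tau_t v-v)(A-B)$, and close with H\"older in $x$, the $\alpha$-H\"older bound on $F'$, and translation invariance. The only cosmetic difference is that you spell out the pointwise bounds $|A|\lesssim|u(x)|^\alpha+|u(x+t)|^\alpha$ and $|A-B|\lesssim|w(x)|^\alpha+|w(x+t)|^\alpha$ explicitly (and correctly flag that the first uses $F'(0)=0$, which the paper leaves implicit), whereas the paper passes directly to the $L^p$ estimate.
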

\begin{proof}
 This is a simple consequence of the definition of Besov space and the H\"older inequality. Recall that
for $0<s<1$, $1<p<\infty$,
\begin{align} \label{es_1}
 \| f\|_{\dot B^s_{p,2} (\R^d)} =
 \left( \int_{\R^d} \frac{ \| f(x+t) -f(x)\|_p^2} {|t|^{d+2s}} dt \right)^{\frac 12}.
\end{align}
By using the fundamental theorem of calculus, we have
\begin{align*}
 &F(u(x+t)) - F(u(x)) \\
&\hspace{0.2in}= (u(x+t)-u(x)) \cdot \int_0^1 F^\prime ( \lambda u(x+t) + (1-\lambda) u(x) ) d\lambda
\end{align*}
and
\begin{align*}
 &F(v(x+t)) - F(v(x)) \\
&\hspace{0.2in}=  (v(x+t)-v(x)) \cdot \int_0^1 F^\prime ( \lambda v(x+t) + (1-\lambda) v(x) ) d\lambda
\end{align*}
Subtracting the above two identities and rearranging terms, we obtain
\begin{align*}
 & F(u(x+t)) - F(v(x+t)) - F(u(x)) +F(v(x)) \\
&\hspace{0.2in}=  ( (u-v)(x+t) - (u-v)(x)) \int_0^1 F^\prime (\lambda u(x+t) + (1-\lambda) u(x)) d\lambda \\
& \hspace{0.4in} + (v(x+t)-v(x)) \int_0^1 ( F^\prime( \lambda u(x+t) + (1-\lambda) u(x))\\
&\hspace{0.4in}-F^\prime ( \lambda v(x+t) + (1-\lambda) v(x)) ) d\lambda.
\end{align*}
Therefore by H\"older continuity of $F^\prime$ and translation invariance of $L^p$ norms in $\R^d$, we get
\begin{align}
 & \| F(u(x+t)) - F(v(x+t)) - F(u(x)) +F(v(x)) \|_p \notag \\
&\hspace{0.2in}\le   \| (u-v)(x+t) - (u-v)(x) \|_{p_1} \cdot \| |u|^\alpha \|_{p_2} \notag \\
& \hspace{0.4in} + \| v(x+t) -v(x) \|_{p_3} \cdot \| |u-v|^\alpha \|_{p_4}, \label{es_2}
\end{align}
where $\frac 1p = \frac 1 {p_1} + \frac 1 {p_2} = \frac 1 {p_3} + \frac 1{p_4}$. Now
clearly \eqref{es_0} follows from \eqref{es_1} and \eqref{es_2}.
\end{proof}

With these estimates in hand, we now prove some further inequalities that will help us to bound the nonlinear term.

\begin{lem}[Nonlinear estimates] \label{lem4a}
We have
\begin{align}
\norm{ F(u)}_{W'(I)}&\lesssim \norm{u}_{X(I)}^{\theta_2\p}\norm{u}_{\dot S^1(I)}^{(1-\theta_2)\p+1}\label{e218d}\\
\norm{ F(u)}_{X'(I)}&\lesssim \norm{u}_{X(I)}^{\theta_2\p+1}\norm{u}_{\dot S^1(I)}^{(1-\theta_2)\p}\label{e218e}\\
 \| |\nabla|^{\frac 2d} F^\prime (u ) \|_{L_t^{\frac{d+1}2} L_x^{\frac{d^3+d^2}{2d^2+2d+2}} (I)}
& \lesssim  \|u\|^{\frac 4d}_{L_t^{\frac{2(d+1)}{d-2}} \dot H^{\frac 12,\frac{2(d^2+d)}{d^2-d+1}} (I)}
\cdot \| u\|_{S (I)}^{\frac 8{d(d-2)} }. \label{e218a} \\
 \| F(u) -F(w) \|_{X^\prime(I)} & \lesssim \| u-w\|_{X(I)} \cdot ( \|u-w\|^{\frac 4{d-2}}_{S(I)} + \|w\|^{\frac 4{d-2}}_{S(I)} )
 \notag\\
\notag&\quad+ \|u-w\|_{X(I)}
\cdot ( \|u-w\|_{S(I)} +  \|w\|_{S(I)} )^{\frac 8{d(d-2)}}\\
& \qquad \cdot ( \|u-w\|_{X(I)}^{\theta_4} \cdot \|u-w\|_{\dot S^1(I)}^{1-\theta_4} \notag\\
& \qquad +\|w\|_{X(I)}^{\theta_4} \cdot \| w\|_{Y(I)}^{1-\theta_4} )^{\frac 4d}
\label{e218b}
\\
  \| F(u) -F(w) \|_{W^\prime(I)} & \lesssim
\| u-w\|_{W(I)} \cdot ( \|u-w\|^{\frac 4{d-2}}_{S(I)} + \|w\|^{\frac 4{d-2}}_{S(I)} ) \notag\\
& \quad + \|u-w\|^{\frac 4{d-2}}_{S(I)} \cdot \|w\|_{W(I)}. \label{e218c}
\end{align}

\end{lem}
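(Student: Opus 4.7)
The plan is to handle the five estimates in order, using as main tools the $C^1$ and $C^\alpha$ fractional chain rules (Lemmas~\ref{lem5}, \ref{lem6}), the fractional Leibniz rule (Lemma~\ref{lem8}), the Besov difference estimate of Lemma~\ref{lem4aa}, the interpolations of Lemma~\ref{lem3}, and H\"older's inequality in space-time to redistribute exponents. Throughout, $F'(u)\sim|u|^\p$ is H\"older continuous of order $\alpha=\p$ when $d\ge 7$ and Lipschitz when $d=6$, so that Lemma~\ref{lem5} or Lemma~\ref{lem6} may be applied as appropriate.

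For \eqref{e218d} and \eqref{e218e}, I would apply a Besov/Sobolev chain rule to $F(u)$ (Lemma~\ref{lem4aa} with $v=0$ for \eqref{e218d}; Lemma~\ref{lem5} for \eqref{e218e}), placing $F'(u)$ in $L_x^{(d+1)/2}$ and the derivative of $u$ (either $|\nabla|^{1/2}u$ or $|\nabla|^{2/d}u$) in the $W$- or $X$-spatial Lebesgue space. The identity $\||u|^\p\|_{L_x^{(d+1)/2}}=\|u\|_{L_x^{2(d+1)/(d-2)}}^\p$ and a matching H\"older in time produce $\|u\|_S^\p\,\|u\|_W$ and $\|u\|_S^\p\,\|u\|_X$ respectively; substituting $\|u\|_S\lesssim\|u\|_X^{\theta_2}\|u\|_W^{1-\theta_2}$ from Lemma~\ref{lem3}(b) together with $\|u\|_W\le\|u\|_{\dot S^1}$ yields the exponents claimed. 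For \eqref{e218a} I would invoke Lemma~\ref{lem6} with $s=2/d$, $\sigma=1/2$, $\alpha=\p$; the condition $s/\alpha=(d-2)/(2d)<1/2$ holds, and the values $s/\sigma=4/d$ and $\alpha-s/\sigma=8/(d(d-2))$ reproduce exactly the two factors on the right-hand side after a standard H\"older step in space-time.

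For the difference bound \eqref{e218b}, I would write
\[
F(u)-F(w)=(u-w)\cdot G,\qquad G:=\int_0^1 F'\bigl(\lambda u+(1-\lambda)w\bigr)\,d\lambda,
\]
and apply the fractional Leibniz rule (Lemma~\ref{lem8}) to $|\nabla|^{2/d}\bigl[(u-w)G\bigr]$. The piece where $|\nabla|^{2/d}$ lands on $u-w$ uses only the pointwise bound $|G|\lesssim|u-w|^\p+|w|^\p$ and produces the first line after H\"older in space-time. The piece where $|\nabla|^{2/d}$ lands on $G$ is dominated by \eqref{e218a}, applied with $u$ replaced by $\lambda u+(1-\lambda)w$ and then $\lambda$-integrated; the resulting norm $\|\cdot\|_{L_t^{2(d+1)/(d-2)}\dot H^{1/2,\,2(d^2+d)/(d^2-d+1)}}$ is then converted via Lemma~\ref{lem3}(d) into $\|\cdot\|_X^{\theta_4}\|\cdot\|_Y^{1-\theta_4}$, with the embedding $Y\hookrightarrow\dot S^1$ accounting for the $\dot S^1$ slot on the $u-w$ factor.

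For \eqref{e218c} I would apply Lemma~\ref{lem4aa} directly with $\alpha=\p$, $p=2(d+1)/(d+3)$, $p_1=p_4=2(d+1)/(d-1)$, and $p_2=p_3=(d+1)/2$; the spatial factors recombine into the $W$- and $S$-spatial norms, H\"older in time closes the estimate, and $\|u\|_S^\p\lesssim\|u-w\|_S^\p+\|w\|_S^\p$ absorbs the extra piece. The hard part will be \eqref{e218b}, which combines fractional Leibniz with the $C^\alpha$ chain rule on a nonlinearity depending on both $u$ and $w$ and simultaneously requires Lemma~\ref{lem3}(d) and the embedding $Y\hookrightarrow\dot S^1$; careful bookkeeping of all H\"older exponents in space and time is the principal technical challenge.
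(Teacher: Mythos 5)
Your proposal matches the paper's proof essentially step-for-step: the fundamental-theorem-of-calculus factorization $F(u)-F(w)=(u-w)G$ together with the fractional Leibniz rule for \eqref{e218b}, Lemma~\ref{lem6} with $s=\tfrac 2d$, $\sigma=\tfrac12$, $\alpha=\p$ for \eqref{e218a}, the chain rule plus Lemma~\ref{lem3}(b) and $\dot S^1\hookrightarrow W$ for \eqref{e218d}--\eqref{e218e}, and Lemma~\ref{lem4aa} for \eqref{e218c}. One small slip: you write the embedding as $Y\hookrightarrow\dot S^1$, but the direction actually needed (and used, correctly, in your estimate) is $\dot S^1\hookrightarrow Y$, i.e.\ $\|\cdot\|_{Y}\lesssim\|\cdot\|_{\dot S^1}$.
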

\begin{proof}
First \eqref{e218d} and \eqref{e218e} follow from Lemma \ref{lem5}, H\"older in time, Lemma \ref{lem3}, and $\dot S^1\hookrightarrow W(I)$. \eqref{e218a} follows from Lemma \ref{lem6} and H\"older in time. Next we establish
\eqref{e218b}. By the fundamental theorem of calculus,
\begin{align*}
 F(u) - F(w) = (u-w) \int_0^1 F^\prime ( \lambda u + (1-\lambda) w) d\lambda.
\end{align*}
Therefore by Lemma \ref{lem8} and H\"older in time,
\begin{align}
&\| F(u) -F(w) \|_{X^\prime(I)} \notag \\
&\hspace{0.2in}  \lesssim \sup_{0\le \lambda \le 1} \left\|
\| |\nabla|^{\frac 2{d}} (u-w) \|_{L_x^{\frac{2(d+1)}{d-1}}} \cdot
\| F^\prime ( \lambda u + (1-\lambda) w ) \|_{L_x^{\frac {d+1}2}} \right\|_{L_t^{\frac {d^2+d}{3d+2}}}
\label{aa1} \\
&\hspace{0.4in} + \sup_{0\le \lambda \le 1} \left\|
\| |\nabla|^{\frac 2{d}}  F^\prime ( \lambda u + (1-\lambda) w ) \|_{L_x^{\frac{d^3+d^2}{2d^2+2d+2}}}
\cdot
\| u-w \|_{L_x^{\frac{2d^3+2d^2}{d^3-d^2-4d-4} }} \right\|_{L_t^{\frac {d^2+d}{3d+2}}}
\label{aa2}
\end{align}
For \eqref{aa1}, by H\"older we have
\begin{align}
\eqref{aa1} \lesssim \| u-w\|_{X(I)} \cdot ( \|u-w\|^{\frac 4{d-2}}_{S(I)} + \|w\|^{\frac 4{d-2}}_{S(I)} ).
\label{et20a}
\end{align}
Similarly, for \eqref{aa2}, by H\"older, \eqref{e218a}, Sobolev and Lemma \ref{lem3}, we have
\begin{align}
\eqref{aa2} & \lesssim
\sup_{0\le \lambda \le 1} \| |\nabla|^{\frac 2{d}}  F^\prime ( \lambda u + (1-\lambda) w )
\|_{L_t^{\frac{d+1}2} L_x^{\frac{d^3+d^2}{2d^2+2d+2}} (I)}
\cdot
\| u-w \|_{L_t^{\frac{d^2+d}{d+2}} L_x^{\frac{2d^3+2d^2}{d^3-d^2-4d-4} } (I) } \notag \\
& \lesssim \|u-w\|_{X(I)} \cdot ( \|u-w\|_{X(I)}^{\theta_4} \cdot \|u-w\|_{\dot S^1(I)}^{1-\theta_4} +
\|w\|_{X(I)}^{\theta_4} \cdot \| w\|_{Y(I)}^{1-\theta_4} )^{\frac 4d} \notag\\
& \qquad \cdot ( \|u-w\|_{S(I)} +  \|w\|_{S(I)} )^{\frac 8{d(d-2)}}. \label{et20b}
\end{align}
Clearly now \eqref{e218b} follows from \eqref{et20a} and \eqref{et20b}.
Finally, \eqref{e218c} follows directly from Lemma \ref{lem4aa} and H\"older in time.
\end{proof}

\section{Statements of main results}  \label{sec-results}

In this section we state the main results of this paper.
We begin by recalling the definition
of a strong solution to the Cauchy problem (NLW).

\begin{defn}[Strong solution]
We call $u$ a \emph{strong solution} to (NLW) on a time interval $I$ if $u \in C(I, \, \dot H^1)$
and satisfies the Duhamel formula
\begin{align*}
 u(t) = K(t) ( u_0 , u_1) + \int_{0}^t \frac{ \sin ( \sqrt{-\Delta} (t-\tau))} {\sqrt{-\Delta}}
F(u(\tau)) d \tau
\end{align*}
in the sense of tempered distributions for every $t\in I$.
\end{defn}
\begin{rem}
We stress here that the definition of a strong solution only requires the fact that $u \in C(I,\,
\dot H^1)$. In particular, Strichartz space is not involved in the definition of the solution.
\end{rem}

As discussed in the introduction, the local theory for (NLW) has been extensively studied.  We now formulate Theorem \ref{lwpthm} resembling the statement in \cite{keme06}.  The proof combines the ideas from \cite{keme06} with the ideas used in the proof of local existence in \cite{tavi05}.
As a result we obtain the local existence in the space $\dot S^1$ and local well-posedness in $X$.
\begin{thm}
\label{lwpthm}
Let $d\geq 6$, $(u_0,u_1)\in \dot{H}^1(\R^d)\times L^2(\mathbb{R}^d)$, and $I\subset \mathbb{R}$ be an interval with $t_0=0\in I$ such that
\begin{align*}
\lVert (u_0,u_1)\rVert_{\dot{H}^1\times L^2}\leq A.
\end{align*}
Then there exists $\eta=\eta(A)$ such that
\begin{align*}
\lVert K(t)(u_0,u_1)\rVert_{S(I)}<\eta
\end{align*}
implies that there exists a unique solution $u$ to (NLW) with $(u,\partial_t u)\in C(I;\dot{H}^1\times L^2)$, and
\begin{align*}
\lVert u\rVert_{W(I)}+\lVert \partial_t\abs{\nabla}^{-1}u\rVert_{W(I)}&<\infty,\\
\norm{u}_{\dot S^1(I)}&<\infty,\\
\lVert u\rVert_{X(I)}&\leq 2\delta,
\end{align*}
where $\delta =C\eta^{\theta_1}A^{1-\theta_1},$ where $\theta_1$ is as in Lemma \ref{lem3}.
\end{thm}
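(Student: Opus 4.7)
The plan is a Banach fixed-point argument for the Duhamel map
\begin{align*}
\Phi(u)(t) := K(t)(u_0,u_1) + \int_0^t \frac{\sin((t-\tau)\sqrt{-\Delta})}{\sqrt{-\Delta}} F(u(\tau))\,d\tau
\end{align*}
on a closed ball in the intersection space $X(I)\cap \dot S^1(I)$. The first step is to upgrade the smallness of the linear evolution from $S(I)$ to the exotic Strichartz space $X(I)$: Lemma \ref{lem3}(a), the energy bound $\|K(t)(u_0,u_1)\|_{L^\infty_t\dot H^1}\lesssim A$, and the hypothesis $\|K(t)(u_0,u_1)\|_{S(I)}<\eta$ together give
\begin{align*}
\|K(t)(u_0,u_1)\|_{X(I)} \leq C\eta^{\theta_1}A^{1-\theta_1}=\delta,
\end{align*}
while the standard Strichartz estimate (Lemma \ref{lem1}) yields $\|K(t)(u_0,u_1)\|_{\dot S^1(I)}\lesssim A$.

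I would then work in the ball
\begin{align*}
B = \bigl\{u : \|u\|_{X(I)}\leq 2\delta,\ \|u\|_{\dot S^1(I)}\leq C_0 A \bigr\}
\end{align*}
for a sufficiently large constant $C_0 = C_0(d)$, with the metric induced by $\|\cdot\|_{X(I)} + \|\cdot\|_{\dot S^1(I)}$. Self-mapping follows by combining Lemma \ref{lem4} with \eqref{e218e},
\begin{align*}
\|\Phi(u)\|_{X(I)} \leq \delta + C(2\delta)^{1+\nnb}(C_0 A)^{\nna},
\end{align*}
and Lemma \ref{lem1} with \eqref{e218d},
\begin{align*}
\|\Phi(u)\|_{\dot S^1(I)} \leq CA + C(2\delta)^{\nnb}(C_0 A)^{\nnd};
\end{align*}
since $\theta_2\in(0,1)$ the exponents $\nnb$ and $\nna$ are strictly positive, so choosing $\eta = \eta(A)$ small enough forces both corrections to be absorbable and gives $\Phi(B)\subset B$. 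Contraction follows from the difference estimates \eqref{e218b}--\eqref{e218c} combined with Lemma \ref{lem3}(b) (to dominate the $S(I)$ norms of $u,v,u-v$ by $X(I)^{\theta_2}\cdot \dot S^1(I)^{1-\theta_2}$) and Lemma \ref{lem3}(d) (to dominate the stray $\|v\|_{Y(I)}$ appearing in \eqref{e218b} by $\|v\|_{\dot S^1(I)}$). Each resulting term in the bound for $\|\Phi(u)-\Phi(v)\|_{X(I)\cap\dot S^1(I)}$ carries a strictly positive power of $\delta$, so after a further shrinkage of $\eta$ the Lipschitz constant on $B$ is at most $\frac 12$.

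The Banach fixed-point theorem then produces a unique $u\in B$; by construction $u$ satisfies the Duhamel formula, hence $u\in C(I;\dot H^1)$ and $\partial_t u \in C(I;L^2)$ by standard Strichartz continuity. The required $W(I)$ bounds follow from the embedding $\dot S^1(I)\hookrightarrow W(I)$, since the pair $\bigl(\tfrac{2(d+1)}{d-1},\tfrac{2(d+1)}{d-1}\bigr)$ is wave-admissible with $\beta = \tfrac 12$, and similarly for $|\nabla|^{-1}\partial_t u$ via the companion term in the definition of $\dot S^1$. The main obstacle is essentially bookkeeping: after expanding \eqref{e218b}--\eqref{e218c} and substituting the interpolation inequalities from Lemma \ref{lem3}, one must check that every term either carries a strictly positive power of $\delta$ (coming from the smallness of the linear evolution in $X(I)$) or factorizes through $\|u-v\|_{X(I)} + \|u-v\|_{\dot S^1(I)}$ with a small prefactor, so that the entire argument closes on a single ball rather than requiring a subdivision of the time interval.
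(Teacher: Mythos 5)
Your set-up (upgrading smallness from $S$ to $X$ via Lemma \ref{lem3}(a), iterating the Duhamel map, using \eqref{e218d}--\eqref{e218e} for boundedness, and aiming at uniqueness via the nonlinear difference estimates) matches the paper up to the point of establishing boundedness of the iterates. The gap is in the claimed contraction on $X(I)\cap\dot S^1(I)$. Look at the last term of \eqref{e218c}:
\begin{align*}
\|u-w\|_{S(I)}^{\frac{4}{d-2}}\cdot\|w\|_{W(I)}.
\end{align*}
For $d>6$ the exponent $\tfrac{4}{d-2}<1$, so after Lemma \ref{lem3}(b) this becomes $\lesssim A\,\|u-w\|_{X(I)}^{\frac{4\theta_2}{d-2}}\|u-w\|_{\dot S^1(I)}^{\frac{4(1-\theta_2)}{d-2}}$, which is genuinely sub-linear in $\|u-w\|_{X(I)\cap\dot S^1(I)}$. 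No amount of shrinking $\eta$ (hence $\delta$) turns a H\"older-$\alpha$ bound with $\alpha<1$ into a Lipschitz bound, so the Banach fixed-point theorem does not apply; the Picard differences $\|u^{n+1}-u^n\|$ would stabilize near $C^{1/(1-\alpha)}$ rather than tend to zero. This is exactly the obstruction the paper emphasizes in the introduction (failure of Lipschitz continuity of the nonlinearity in the Strichartz norm for $d>6$), and it is the reason the local theory is not a straightforward fixed-point argument in these dimensions.

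The paper's route is to run the contraction \emph{only} in $X(I)$, where \eqref{e218b} is linear in $\|u-w\|_{X(I)}$ (the quantities carrying fractional powers of $\|u-w\|$ appear only as uniformly bounded coefficients, controllable via the $a$, $b$ bounds and interpolation). The sequence is then Cauchy in $X(I)$, while in $\dot S^1(I)$ one only proves uniform boundedness. Since $\dot S^1\hookrightarrow W(I)$ and $W(I)$ is reflexive, a subsequence converges weakly in $W(I)$, the weak limit is identified with the $X(I)$-limit, and Strichartz then yields $u\in\dot S^1(I)$. Your proof should be revised to separate the two roles: Cauchy/contraction in $X(I)$ only, plus an a priori $\dot S^1$ bound and a weak-compactness step.
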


We prove the unconditional uniqueness of strong solutions as stated in the
following theorem:

\begin{thm}[Unconditional uniqueness of strong solutions] \label{thm_unconditional}
Let $u$, $v$ be two strong solutions of (NLW) on $I$. Suppose $u(t_0)=v(t_0)$, $u_t(t_0)=v_t(t_0)$
for some $t_0 \in I$, then $u(t)=v(t)$, $\forall\, t\in I$.
\end{thm}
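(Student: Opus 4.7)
The plan is to work locally: fix $t_0 \in I$ where $u(t_0) = v(t_0)$ and $\partial_t u(t_0) = \partial_t v(t_0)$, and show $u \equiv v$ on a small interval $J = [t_0, t_0 + \tau]$; a standard continuity argument in $t$ then propagates the equality to all of $I$. Setting $w = u - v$, subtracting the two Duhamel representations gives $\partial_t w(t_0)=w(t_0)=0$ and
\begin{align*}
w(t) = \int_{t_0}^t \frac{\sin((t-s)\sqrt{-\Delta})}{\sqrt{-\Delta}}\bigl[F(u(s)) - F(v(s))\bigr]\, ds,
\end{align*}
while the fundamental theorem of calculus yields $F(u)-F(v) = w \cdot G$ with $G := \int_0^1 F'(v+\theta w)\, d\theta$.

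The central difficulty is that a strong solution carries only $C_t \dot H^1$ regularity, with no a priori Strichartz-type space-time control on $u$, $v$, or $w$. The plan is to extract just enough Strichartz control on $w$ to close a contraction-style absorption. I would decompose the product $w \cdot G$ via a paraproduct at a Littlewood--Paley threshold $N$,
\begin{align*}
w \cdot G \;=\; \Delta_{\leq N} w \cdot G \;+\; \Delta_{> N} w \cdot \Delta_{\leq N} G \;+\; \Delta_{> N} w \cdot \Delta_{> N} G,
\end{align*}
apply the Besov Strichartz estimate (Lemma~\ref{lem1}) to the Duhamel formula for $w$, and estimate each piece using the nonlinear bounds of Lemma~\ref{lem4a} together with Bernstein (for low-frequency pieces of $G$, viewed as a bounded multiplier once $N$ is fixed). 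The goal is a bound of the form
\begin{align*}
\|w\|_{Z(J)} \;\leq\; \bigl(\eta_1(N) + \eta_2(\tau, N)\bigr)\, \|w\|_{Z(J)},
\end{align*}
in a suitable Strichartz-type space $Z$ (such as $X(J)$ or a Besov variant), where $\eta_1(N) \to 0$ as $N \to \infty$ comes from the high-frequency tails of $u$, $v$, and where $\eta_2(\tau, N) \to 0$ as $\tau \to 0$ for each fixed $N$ comes from H\"older in time applied to the low-frequency multiplier. Choosing $N$ large and then $\tau$ small forces $\|w\|_{Z(J)} = 0$ and hence $u \equiv v$ on $J$.

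Two inputs make the high-frequency smallness usable. First, the mere membership of $u, v$ in $C(I, \dot H^1)$ combined with uniform continuity on compact subintervals and dominated convergence yields
\begin{align*}
\sup_{t \in J}\bigl(\|\Delta_{> N} u(t)\|_{\dot H^1} + \|\Delta_{> N} v(t)\|_{\dot H^1}\bigr) \longrightarrow 0 \quad \text{as } N \to \infty,
\end{align*}
with an analogous tail decay for the relevant Strichartz-type norms of $u$, $v$ over $J$. The main obstacle, and the place where the argument must go beyond \cite{P04}, is the high-high piece $\Delta_{> N} w \cdot \Delta_{> N} G$: in dimensions $d = 4, 5$ one expands $F$ to second order via Taylor, but for $d \geq 7$ the derivative $F'(u) = c|u|^{\frac{4}{d-2}-1} u$ is only H\"older continuous of order $\alpha = \tfrac{4}{d-2}$, so $F \notin C^2$ and the Planchon expansion is unavailable. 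I would replace that step by the H\"older-type chain rule (Lemma~\ref{lem6}), the Besov H\"older embedding (Lemma~\ref{simplefact}), and the product estimate (Lemma~\ref{lem4aa}), which together trade the missing second derivative for a refined Besov structure on $G$ and on $w$; this is exactly the mechanism announced in the introduction as the novelty over \cite{P04}. Summing up and iterating in $t_0$ then yields $u \equiv v$ on all of $I$.
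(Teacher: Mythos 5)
Your high-level strategy --- paraproduct decomposition, Besov Strichartz estimates, and the H\"older-type chain rule (Lemma~\ref{lem6}) together with Lemmas~\ref{simplefact} and \ref{lem4aa} to compensate for $F\notin C^2$ --- is the right neighborhood, but the specific mechanism has genuine gaps. The most serious is the source of smallness. You keep the undifferentiated factor $G=\int_0^1 F'(v+\theta w)\,d\theta$, which depends on both $u$ and $v$, and propose to make the high-high piece small by sending a fixed frequency threshold $N\to\infty$, citing tail decay of $u,v$ in $\dot H^1$. But the relevant function in that piece is $\Delta_{>N}G$, and high-frequency smallness of $u,v$ does not transfer to the nonlinear expression $G$ in any simple way; a nonlinear image of low-frequency data has substantial high-frequency content. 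Moreover, your assertion of ``an analogous tail decay for the relevant Strichartz-type norms of $u,v$ over $J$'' has no content at this stage: a strong solution carries only $C_t\dot H^1$ regularity, so no space-time Strichartz norm of $u$ or $v$ is finite a priori, let alone small in the tail.

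The paper closes the estimate with two moves you do not make. First, it invokes the existence part of Theorem~\ref{lwpthm} to assume without loss of generality that one of the two solutions, say $u$, lies in $\dot S^1(I)$; only the second solution remains merely $C_t\dot H^1$. Second, exploiting the evenness of $F'$, it rewrites $F(u)-F(v)=\delta\cdot F'(u)+\delta\cdot H$ where $H=\int_0^1\bigl(F'((1-\lambda)\delta-u)-F'(u)\bigr)\,d\lambda$ satisfies the pointwise bound $|H|\lesssim|\delta|^{4/(d-2)}$. This is the decisive algebraic step: the factor $F'(u)$ depends only on the Strichartz-regular solution, so its paraproduct contributions are absorbed by shrinking the time interval (the Strichartz norm $\|u\|_{L_t^{2(d+1)/(d-2)}\dot B^{d/2(d-1)}_{2(d^2-1)/(d^2-2d+3),2}(I_0)}\to 0$ as $I_0\to\{0\}$), while the $H$-terms are absorbed using $\|\delta\|_{L^\infty_t\dot H^1(I_0)}\to 0$, which holds because $\delta(0)=0$ and $\delta\in C(I,\dot H^1)$. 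No $N\to\infty$ limit is needed, and the decomposition used is a Bony-type relative-frequency paraproduct $G_1+G_2$ adapted to the negative-regularity norm $L_t^2\dot B^{-1/(d-1)}_{2(d-1)/(d-3),2}$ in which $\delta$ is measured, rather than your fixed-threshold split. Without the evenness split and the WLOG Strichartz regularity of one solution, your absorption does not close.
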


We also prove the following lemma which gives the standard blow-up criterion, that was formulated for
the (NLW) in ${\mathbb R} \times {\mathbb R}^d$ for $d=3,4,5$ by Kenig and Merle in
\cite{keme06}. Here we extend  this blow-up criterion to higher dimensions $d \geq 6$
by following the ideas of the proof of the blow-up criterion for the NLS in high dimensions
\cite{tavi05}.

\begin{lem}[Standard Blow-Up Criterion] \label{blowup_criterion_lemma}Let $(u_0,u_1)\in \dot{H}^1(\mathbb{R}^d)\times L^2(\mathbb{R}^d)$ and $u\in C([t_0,T_0],\dot{H}^1)$ be given such that $u$ is a strong solution to (NLW) on $[t_0,T_0]$ and
\begin{align*}
\lVert u\rVert_{S([t_0,T_0])}<\infty.
\end{align*}
Then there exists $\delta=\delta(u_0,u_1)$ such that $u$ extends to a strong solution to (NLW) on $[t_0,T_0+\delta]$.
\end{lem}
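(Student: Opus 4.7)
The plan is the standard partition-plus-extension argument, in the spirit of the high-dimensional NLS blow-up criterion of \cite{tavi05}.

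First, since $u\in C([t_0,T_0],\dot H^1)$, set $A_1:=\sup_{t\in[t_0,T_0]}\|u(t)\|_{\dot H^1}<\infty$ (finite by continuity and compactness). Since $\|u\|_{S([t_0,T_0])}<\infty$, I partition $[t_0,T_0]$ into finitely many consecutive sub-intervals $I_j=[t_j,t_{j+1}]$, $j=0,\dots,N-1$, such that $\|u\|_{S(I_j)}\le\eta$, where $\eta=\eta(A_1,\|u_1\|_{L^2})$ is a small constant to be chosen.

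Second, I would upgrade the scattering bound to a full $\dot S^1$ bound on each $I_j$, proceeding inductively from $j=0$. Applying Lemma \ref{lem1} to the Duhamel formula starting at $t_j$, combined with the nonlinear estimate \eqref{e218d} and the interpolation $\|u\|_{X(I_j)}\lesssim \|u\|_{S(I_j)}^{\theta_1}\|u\|_{L_t^\infty\dot H^1(I_j)}^{1-\theta_1}$ (Lemma \ref{lem3}(a)), one gets
\begin{align*}
\|u\|_{\dot S^1(I_j)} \lesssim \|(u(t_j),\partial_t u(t_j))\|_{\dot H^1\times L^2} + \eta^{\theta_1\theta_2\cdot\frac{4}{d-2}}A_1^{(1-\theta_1)\theta_2\cdot\frac{4}{d-2}}\|u\|_{\dot S^1(I_j)}^{(1-\theta_2)\cdot\frac{4}{d-2}+1}.
\end{align*}
With $\eta$ chosen small enough relative to the inductively accumulated bound on $\|(u(t_j),\partial_t u(t_j))\|_{\dot H^1\times L^2}$, a standard continuity-in-the-endpoint bootstrap closes the $\dot S^1$ estimate on $I_j$. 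Since the $\dot S^1$ norm controls $\partial_t u$ in $L_t^\infty L_x^2$ (take the admissible pair $(q,r)=(\infty,2)$ so that $\beta(r)=0$), it follows that $(u(t_{j+1}),\partial_t u(t_{j+1}))\in\dot H^1\times L^2$ with controlled norm, and we may iterate. After $N$ steps we obtain $\|u\|_{\dot S^1([t_0,T_0])}<\infty$, and in particular $(u(T_0),\partial_t u(T_0))\in\dot H^1\times L^2$ with some finite norm $M$.

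Third, I invoke Theorem \ref{lwpthm} at time $T_0$ with initial data $(u(T_0),\partial_t u(T_0))$ of size $M$. The required smallness $\|K(\cdot-T_0)(u(T_0),\partial_t u(T_0))\|_{S([T_0,T_0+\delta])}<\eta(M)$ holds for $\delta>0$ sufficiently small, by Strichartz on a slightly larger interval plus dominated convergence applied to the $L_{t,x}^{2(d+1)/(d-2)}$-norm of the linear evolution. Gluing the resulting strong solution on $[T_0,T_0+\delta]$ to $u$ via the unconditional uniqueness result (Theorem \ref{thm_unconditional}) produces the desired extension.

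The main obstacle is the second step: the exponent $(1-\theta_2)\cdot\frac{4}{d-2}+1$ on $\|u\|_{\dot S^1(I_j)}$ exceeds $1$, so closing the bootstrap requires care (further subdivision of $I_j$ if necessary), and the choice of $\eta$ must account for the possible growth of $\|\partial_t u(t_j)\|_{L^2}$ across the finitely many iterations. One must also verify that the strong-solution hypothesis is compatible with $\partial_t u\in L^2$, which is not built into the definition; this follows from the Strichartz bound applied to the time-derivative of the Duhamel representation, once \eqref{e218d} places the nonlinearity in the appropriate dual Strichartz space.
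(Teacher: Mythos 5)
Your overall strategy --- partition $[t_0,T_0]$ so that each piece has small $S$-norm, propagate an $\dot S^1$ bound inductively, then extend past $T_0$ via Theorem~\ref{lwpthm} and glue with Theorem~\ref{thm_unconditional} --- is exactly the paper's. The small difference in the extension step is immaterial: you apply Theorem~\ref{lwpthm} at $T_0$ and control the free evolution on $[T_0,T_0+\delta]$ by dominated convergence, whereas the paper centers at $T_0-\epsilon$ so that the new solution overlaps $u$ on $[T_0-\epsilon,T_0]$ and unconditional uniqueness identifies them immediately; both are fine.

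The genuine issue is in your Step~2, and it is larger than you acknowledge. By using \eqref{e218d} together with the interpolation $\|u\|_X\lesssim\|u\|_S^{\theta_1}\|u\|_{L_t^\infty\dot H^1}^{1-\theta_1}$, you arrive at
\begin{align*}
\|u\|_{\dot S^1(I_j)} \lesssim B_j + \eta^{a}A_1^{b}\,\|u\|_{\dot S^1(I_j)}^{1+c},
\end{align*}
with $c=(1-\theta_2)\tfrac{4}{d-2}>0$ and $B_j=\|(u(t_j),\partial_t u(t_j))\|_{\dot H^1\times L^2}$. A continuity argument then requires $\eta^a A_1^b B_j^c\lesssim 1$, i.e.\ $\eta$ must be small relative to $B_j$. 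But $B_j$ grows geometrically across the $N$ subintervals, and $N$ itself grows like $(L/\eta)^{2(d+1)/(d-2)}$ as $\eta$ decreases; taking $\eta$ smaller (``further subdivision'') makes $N$, hence $\sup_j B_j$, grow super-exponentially faster than $\eta^a$ decays, so the required inequality can never be satisfied uniformly for large data. This circularity is exactly what makes the superlinear exponent dangerous, and ``further subdivision if necessary'' does not escape it.

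The paper sidesteps this by using the \emph{linear} estimate that follows from \eqref{e218c} with $w=0$, namely $\|F(u)\|_{W'(I_j)}\lesssim\|u\|_{S(I_j)}^{4/(d-2)}\|u\|_{W(I_j)}\le\|u\|_{S(I_j)}^{4/(d-2)}\|u\|_{\dot S^1(I_j)}$, which gives
\begin{align*}
\|u\|_{\dot S^1(I_j)}\lesssim B_j+\xi^{4/(d-2)}\|u\|_{\dot S^1(I_j)}
\end{align*}
and absorbs directly once $\xi=\xi(d)$ is a small \emph{universal} constant. Then $N=N(L,\xi,d)$ is fixed first, $B_j\le C^jB_0$, and everything closes without any dependence of the partition parameter on the evolving data. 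You should replace the use of \eqref{e218d} plus Lemma~\ref{lem3}(a) by \eqref{e218c} with $w=0$; with that substitution the rest of your argument is sound.
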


The main result of this paper is the following long term perturbation
theorem, the proof of which we present in Section \ref{sec-perturbation}.

\begin{thm}[Long time perturbation, Sobolev version] \label{thm_long2}
Assume $\tilde u$ is a near solution on  $I\times \R^d$
\begin{align*}
 \partial_{tt} \tilde u -\Delta \tilde u= F(\tilde u) +e
\end{align*}
such that

\begin{enumerate}
\item[(a)]
\begin{align}
   \|\tilde u \|_{L_t^\infty \dot H^1(I) } +
 \| \partial_t  \tilde u\|_{L_t^\infty L^2 (I)}
 +\| \tilde u \|_{L_t^{\frac{2(d+1)}{d-1}} \dot H^{\frac 12,
\frac{2(d+1)}{d-1}} (I)} \le E, \label{e133a}
   \end{align}

\item[(b)]
 \begin{align*}
    \|\tilde u_0 -u_0 \|_{\dot H^1} + \| \tilde u_1 -u_1 \|_{L^2} \le E^\prime.
   \end{align*}

\item[(c)]
Smallness:
\begin{align}
 \| K(t) (\tilde u_0 - u_0, \tilde u_1 -u_1) \|_{S(I)} & \le \epsilon  \label{e133b} \\
 \| |\nabla|^{\frac 12} e \|_{L_{t,x}^{\frac{2(d+1)}{d+3}}(I)} \le \epsilon. \label{e133c}
\end{align}
\end{enumerate}

Then there exists $\epsilon_0 = \epsilon_0(d,E^\prime,E)$ such that if
$0<\epsilon<\epsilon_0$, for (NLW) with initial data $(u_0,u_1)$, there exists
a unique solution $u$ on $I\times \R^d$ with the properties
\begin{align}
 \| \tilde u - u\|_{\dot S^1(I)} & \le C(d,E^\prime,E) \cdot E^\prime, \notag\\
\|\tilde  u- u\|_{S(I)} & \le C(d,E^\prime,E) \cdot \epsilon^c. \label{e210}
\end{align}
Here $0<c<1$ is a constant depending only on the dimension $d$.
\end{thm}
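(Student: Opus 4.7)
The plan is to follow the standard subdivision--and--iteration scheme for long--time perturbation theorems in the energy--critical regime, but to run it simultaneously in the exotic Strichartz space $X$ and the Besov Strichartz space $\dot S^1$. Since the hypotheses \eqref{e133a}--\eqref{e133c} are stated in the Sobolev scale while the nonlinear estimates of Lemma \ref{lem4a} live in the Besov scale, the preparatory step is to upgrade \eqref{e133a} via Lemma \ref{lem7a} (cf.\ Remark \ref{rem1}) to a Besov bound, yielding $\|\tilde u\|_{\dot S^1(I)} \le C(E)$. Combined with the interpolations of Lemma \ref{lem3}, this produces finite control of $\|\tilde u\|_{X(I)}$, $\|\tilde u\|_{S(I)}$, and $\|\tilde u\|_{W(I)}$. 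Applying the Strichartz estimate (Lemma \ref{lem1}) to the initial mismatch together with \eqref{e133b} and Lemma \ref{lem3}(a) then also converts the $S$--smallness of the linear evolution into $X$--smallness, i.e.\ $\|K(t)(u_0-\tilde u_0,u_1-\tilde u_1)\|_{X(I)} \lesssim \epsilon^{\theta_1}(E')^{1-\theta_1}$.

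Next I fix a small threshold $\eta=\eta(E)$ and subdivide $I=\bigcup_{j=1}^{N} I_j$ into $N=N(E,\eta)$ consecutive subintervals on each of which $\|\tilde u\|_{X(I_j)} \le \eta$. On each such piece $J=I_j$ I would establish a short--time perturbation lemma: writing $w=u-\tilde u$, the equation
\begin{align*}
 w_{tt}-\Delta w = F(\tilde u+w)-F(\tilde u) - e
\end{align*}
is solved by contraction in the ball $\{w:\|w\|_{X(J)}+\|w\|_{\dot S^1(J)}\le K\}$. The $X$--bound comes from the exotic Strichartz estimate (Lemma \ref{lem4}) combined with the difference estimate \eqref{e218b}, while the $\dot S^1$--bound comes from the Besov Strichartz estimate (Lemma \ref{lem1}) together with \eqref{e218c}. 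Because $\|\tilde u\|_{X(J)} \le \eta$ is strictly small and the linear data are small, all the products on the right--hand sides of \eqref{e218b}--\eqref{e218c} are perturbative, and the contraction closes with $K\lesssim_{E} E'+\epsilon$.

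Iterating across the $N(E)$ subintervals yields the global bounds. At the right endpoint of $I_j$ the energy of the mismatch $(w,\partial_t w)$ is controlled by the preceding step, and the linear evolution of the new mismatch on $I_{j+1}$ differs from the original one by a Duhamel term that can be absorbed once $\epsilon_0$ is taken small enough depending on $E,E'$. Compounding the constants over the fixed number $N(E)$ of steps gives $\|\tilde u-u\|_{\dot S^1(I)}\le C(d,E',E)E'$; interpolating this Besov bound against the $X$--smallness of $K(t)(u_0-\tilde u_0,u_1-\tilde u_1)$ via Lemma \ref{lem3}(a) then produces the $S$--bound in \eqref{e210} with some $0<c<1$, the sub--unit exponent reflecting both the H\"older (non--Lipschitz) character of \eqref{e218b} and the interpolation powers $\theta_j$ picked up at each of the $N(E)$ stages.

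The main obstacle is precisely the handling of the nonlinear difference: since $F(u)=\mu|u|^{4/(d-2)}u$ fails to be $C^2$ for $d\ge 6$, the estimate \eqref{e218b} mixes the four norms $X$, $S$, $\dot S^1$, $Y$ of both $u-\tilde u$ and $\tilde u$, and the contraction cannot be closed in $X$ alone. Making it close forces $\|\tilde u\|_{X(I_j)}$ to be strictly small on each piece while the stronger $\dot S^1$ and $Y$ norms are merely bounded; this is what motivates both the partition into subintervals and the preparatory transfer from Sobolev to Besov via Lemma \ref{lem7a}, without which the hypotheses would not feed into the Besov--scale nonlinear estimates of Lemma \ref{lem4a} in the correct form.
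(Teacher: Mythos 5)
Your proposal tracks the paper's own proof in all essential respects: upgrade the Sobolev hypothesis \eqref{e133a} to a $\dot S^1$ bound via Lemma~\ref{lem7a}, transfer the $S$--smallness \eqref{e133b} to $X$--smallness via Lemma~\ref{lem3}(a), run a short-time-perturbation-plus-subdivision argument in the pair of norms $(X,\dot S^1)$, and interpolate at the end to obtain \eqref{e210}. Two minor slips worth correcting: the paper partitions $I$ so that $\|\tilde u\|_{S(I_j)}+\|\tilde u\|_{W(I_j)}+\|\tilde u\|_{X(I_j)}+\|\tilde u\|_{Y(I_j)}$ is small --- not just $\|\tilde u\|_{X(I_j)}$ --- since that is the cleanest way to make every factor on the right side of \eqref{e218b}--\eqref{e218c} perturbative; and the concluding $S$--bound comes from applying Lemma~\ref{lem3}(b), not (a), to the difference $u-\tilde u$ itself, using the iteration output $\|\tilde u-u\|_{X(I)}\lesssim\epsilon$ together with the accumulated $\dot S^1$ bound, which produces $c=\theta_2$ depending only on $d$ rather than an exponent that degrades with the number of subintervals.
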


\section{Local well-posedness} \label{sec-lwp}

In this section we prove that the Cauchy problem
(NLW) is locally wellposed.  Let $t_0\in\mathbb{R}$ be given.  By time translation invariance, we may assume $t_0=0$.

\subsection{The proof of the local existence Theorem \ref{lwpthm}.}
First we observe that by Lemma \ref{lem3}
\begin{align}
\norm{K(t)(u_0,u_1)}_{X(I)}&\leq C\norm{K(t)(u_0,u_1)}_{S(I)}^{\theta_1}\norm{K(t)(u_0,u_1)}_{L^\infty_t\dot H^1}^{1-\theta_1}\nonumber\\
&\leq C\eta^{\theta_1}A^{1-\theta_1}\label{m0}
\end{align}
Let $$\delta\equiv C\eta^{\theta_1}A^{1-\theta_1}.$$
Next, we define the sequence of iterates by
\begin{align*}
&u^{-1}=0,\\
&u^0=K(t)(u_0,u_1),\\
&u^{n+1}=K(t)(u_0,u_1)+\int_0^t\wavekernel F(u^{n})(s)ds,
\end{align*}
We show that the sequence is bounded in $\dot S^1(I)$ and in $X(I)$.  By \eqref{m0} and the Strichartz inequality, we have
\begin{align}\label{m3}
\norm{u^0}_{X(I)}\leq \delta\quad\mbox{and}\quad\norm{u^0}_{\dot S^1(I)}\leq CA.
\end{align}
Let $a=2\delta$ and $b=2CA$, and suppose for $n\geq 1$
\[
\norm{u^n}_{X(I)}\leq a\quad\mbox{and}\quad \norm{u^n}_{\dot S^1(I)}\leq b.
\]
Then, by the Strichartz inequality and \eqref{e218d}, we obtain
\begin{align*}
\norm{u^{n+1}}_{\dot S^1(I)}&\leq CA+C\lVert u^n\rVert_{X(I)}^{\theta_2\p}\norm{u^n}_{\dot S^1(I)}^{(1-\theta_2)\p}\lVert u^n\rVert_{\dot S^1(I)}\\
&\leq \frac b2+Ca^{\theta_2\p}b^{(1-\theta_2)\p}b\\
&\leq b,
\end{align*}
if we choose $a$ small enough so that $Ca^{\theta_2\p}b^{(1-\theta_2)\p}\leq \frac{1}{2}$.
Similarly, by \eqref{m3}, Lemma $\ref{lem4}$, and \eqref{e218e}, we get
\begin{align*}
\lVert u^{n+1}\rVert_{X(I)}
&\leq \delta+C\norm{u^n}_{X(I)}^{\theta_2\p}\norm{u^n}_{\dot S^1(I)}^{(1-\theta_2)\p}\norm{u^n}_{X(I)}\\
&\leq \frac a2+Ca^{\theta_2\p}b^{(1-\theta_2)\p}a\\
&\leq a,
\end{align*}
assuming that $a$ is chosen such that it satisfies the same smallness condition as above.  Hence, by induction we have
\[
\norm{u^n}_{X(I)}\leq a\quad\mbox{and}\quad \norm{u^n}_{\dot S^1(I)}\leq b,\quad n\geq 0.
\]
Next we show the sequence is Cauchy in $X(I)$.  To that end, we note that applying Lemma $\ref{lem4}$ and \eqref{e218b} allows us to obtain
\begin{align}
&\nonumber \norm{u^{n+1}-u^n}_{ X(I)}\\
&\hspace{0.2in}\lesssim \norm{F(u^n)-F(u^{n-1})}_{X'(I)}\nonumber \\
&\hspace{0.2in}\lesssim \| u^n-u^{n-1}\|_{X(I)} \cdot ( \|u^n-u^{n-1}\|^{\frac 4{d-2}}_{S(I)} + \|u^{n-1}\|^{\frac 4{d-2}}_{S(I)} )\label{m1}\\
&\hspace{0.4in} + \|u^n-u^{n-1}\|_{X(I)} \cdot ( \|u^n-u^{n-1}\|_{X(I)}^{\theta_4} \cdot \|u^n-u^{n-1}\|_{\dot S^1(I)}^{1-\theta_4}\nonumber \\
&\hspace{0.6in}+\|u^{n-1}\|_{X(I)}^{\theta_4} \cdot \| u^{n-1}\|_{Y(I)}^{1-\theta_4} )^{\frac 4d} \nonumber\\
&\hspace{0.4in} \cdot ( \|u^n-u^{n-1}\|_{S(I)} +  \|u^{n-1}\|_{S(I)} )^{\frac 8{d(d-2)}}.\label{m2}
\end{align}
Then by Lemma \ref{lem3} we get
\begin{align*}
\eqref{m1}\lesssim \| u^n-u^{n-1}\|_{X(I)}a^{\theta_2\p}b^{(1-\theta_2)\p},
\end{align*}
and using $\dot S^1(I)\hookrightarrow Y(I)$ we get
\begin{align*}
\eqref{m2}&\lesssim
\|u^n-u^{n-1}\|_{X(I)} \cdot ( \|u^n-u^{n-1}\|_{X(I)}^{\theta_4} \cdot \|u^n-u^{n-1}\|_{\dot S^1(I)}^{1-\theta_4}   \nonumber\\
&\qquad
+\|u^{n-1}\|_{X(I)}^{\theta_4} \cdot \| u^{n-1}\|_{\dot S^1(I)}^{1-\theta_4} )^{\frac 4d}\nonumber \cdot ( \|u^n-u^{n-1}\|_{S(I)} +  \|u^{n-1}\|_{S(I)} )^{\frac 8{d(d-2)}}\\
&\lesssim
\|u^n-u^{n-1}\|_{X(I)} \cdot ( a^{\theta_4} b^{1-\theta_4})^{\frac 4d}\cdot ( a^{\theta_2} b^{1-\theta_2})^{\frac 8{d(d-2)}}.
\end{align*}
It follows that if $a$ is small enough, the sequence converges to $u$ in $X(I)$.  Since $u^n$ are bounded in $\dot S^1$, they are in particular bounded in $W(I)$, which is reflexive, so $u^n$ converge weakly to $u$ in $W(I)$.  Then, by the Strichartz inequality, we conclude $u \in \dot S^1(I)$.  Also standard arguments using the nonlinear estimate \eqref{e218b} and essentially repeating the calculations above show $u$ solves (NLW) as needed.

\subsection{Unconditional uniqueness}

Having proved the existence of solutions stated in Theorem $\ref{lwpthm}$, we now prove Theorem \ref{thm_unconditional}, which gives the unconditional uniqueness of strong solutions.

We first recall the following fact about Besov norms, which can be proved using
basic properties of Littlewood-Paley operators.

\begin{lem}[Equivalence of Besov norms] \label{lem_Besov_def}
 Let $1<p<\infty$ and $1\le q\le \infty$. Let $s>0$. Then
\begin{align*}
 \| f\|_{\dot B^s_{p,q}} \thickapprox \left( \sum_{j\in \mathbb Z}
 \Big( 2^{js} \| \Delta_{\ge j} f \|_{L_x^p} \Big)^q \right)^{\frac 1q},
\end{align*}
and
\begin{align*}
 \| f\|_{\dot B^{-s}_{p,q}} \thickapprox \left( \sum_{j\in \mathbb Z}
\Big( 2^{-js} \| \Delta_{\le j} f \|_{L_x^p} \Big)^q \right)^{\frac 1q}.
\end{align*}
\end{lem}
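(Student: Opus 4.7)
\textbf{Proof proposal for Lemma \ref{lem_Besov_def}.}

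My plan is to reduce both equivalences to a discrete Young's convolution inequality on $\ell^q(\mathbb Z)$, exploiting the fact that $\Delta_{\ge j}$ and $\Delta_{\le j}$ decompose as (possibly infinite) sums of the elementary Littlewood--Paley pieces $\Delta_k$, and that the hypothesis $s>0$ makes the resulting kernel summable.

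I first treat the positive-regularity case. For the direction bounding the standard Besov norm by the proposed expression, I use the telescoping identity $\Delta_k = \Delta_{\ge k} - \Delta_{\ge k+1}$, so that the triangle inequality in $L^p$ gives
\begin{equation*}
  \|\Delta_k f\|_{L^p} \le \|\Delta_{\ge k} f\|_{L^p} + \|\Delta_{\ge k+1} f\|_{L^p};
\end{equation*}
a shift of index inside $\ell^q$ absorbs the second term into the first, yielding $\|f\|_{\dot B^s_{p,q}} \lesssim \bigl\|(2^{js}\|\Delta_{\ge j}f\|_{L^p})_j\bigr\|_{\ell^q}$. For the opposite direction I expand $\Delta_{\ge j} f = \sum_{k \ge j} \Delta_k f$ and use the triangle inequality to obtain $\|\Delta_{\ge j} f\|_{L^p} \le \sum_{k \ge j} \|\Delta_k f\|_{L^p}$. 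Setting $a_k := 2^{ks}\|\Delta_k f\|_{L^p}$, this rearranges to
\begin{equation*}
  2^{js}\|\Delta_{\ge j} f\|_{L^p} \le \sum_{k \ge j} 2^{-(k-j)s} a_k = (K * a)(j),
\end{equation*}
where $K(m) := 2^{ms}\,\mathbf 1_{\{m \le 0\}}$. The assumption $s > 0$ is exactly what guarantees $K \in \ell^1(\mathbb Z)$, so Young's inequality gives $\|K*a\|_{\ell^q} \le \|K\|_{\ell^1}\|a\|_{\ell^q} = \|K\|_{\ell^1}\|f\|_{\dot B^s_{p,q}}$, completing this direction.

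The negative-regularity case is entirely parallel, with $\Delta_{\le j}$ replacing $\Delta_{\ge j}$. The relation $\Delta_k = \Delta_{\le k} - \Delta_{\le k-1}$ handles the lower bound (via triangle inequality and an index shift). For the upper bound I write $\Delta_{\le j} f = \sum_{k \le j} \Delta_k f$, set $b_k := 2^{-ks}\|\Delta_k f\|_{L^p}$, and obtain
\begin{equation*}
  2^{-js}\|\Delta_{\le j} f\|_{L^p} \le \sum_{k \le j} 2^{-(j-k)s} b_k,
\end{equation*}
which is again convolution of $(b_k)$ against the $\ell^1$ kernel $m \mapsto 2^{ms}\mathbf 1_{\{m \le 0\}}$; Young in $\ell^q$ closes the estimate. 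The case $q = \infty$ follows the same scheme, replacing sums by suprema and Young by its $\ell^1 * \ell^\infty \to \ell^\infty$ version.

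I do not expect a genuine obstacle here: the whole argument is a careful bookkeeping exercise, and the only nontrivial input is the summability of the geometric kernel, which is precisely why the two formulas must be paired as stated ($+s$ with $\Delta_{\ge j}$ and $-s$ with $\Delta_{\le j}$) rather than mixed.
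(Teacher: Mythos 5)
Your argument is correct and complete: the telescoping identities $\Delta_k = \Delta_{\ge k} - \Delta_{\ge k+1}$, $\Delta_k = \Delta_{\le k} - \Delta_{\le k-1}$ together with the geometric-kernel convolution estimate in $\ell^q$ (Young's inequality, using $s>0$ for $\ell^1$-summability of the one-sided kernel) are exactly the ``basic properties of Littlewood--Paley operators'' the paper alludes to without giving a proof. The only point you leave implicit --- that $\Delta_{\ge j}f$ (resp.\ $\Delta_{\le j}f$) converges in $L^p$ for $f\in\dot B^s_{p,q}$ with $s>0$ (resp.\ $\dot B^{-s}_{p,q}$), which follows from the same $\ell^1$ kernel bound --- is harmless and standard, so no gap remains.
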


In our proof of Theorem $\ref{thm_unconditional}$ we will use the following fact regarding paraproducts.  For any two functions $f$ and $g$, we may decompose the product $fg$ into the sum of a low frequency piece and a high frequency piece.  Indeed, by frequency localization, we write
\begin{align}
 fg & = \sum_{j \in \mathbb Z} \Delta_j (fg) \notag \\
 & = \sum_{j\in\mathbb Z} \Delta_j ( \Delta_{\le j+3} f g) + \sum_{j\in \mathbb Z} \Delta_j ( \Delta_{>j+3} f \Delta_{\ge j+1} g) \notag\\
& =: G_1(f,g) +G_2(f,g). \label{G1G2}
\end{align}

We shall estimate $G_1$ and $G_2$ separately using the following lemma.

\begin{lem}[Paraproduct estimates] \label{lem_G1G2}
 Let $s>0$, $\sigma>0$, $1<p_i <\infty$, $i=1,\cdots, 6$. Then
\begin{align}
 \|G_1(f,g) \|_{\dot B^{-s}_{p,2}} &\lesssim \|f\|_{\dot B^{-s}_{p_1,2}} \cdot \|g\|_{p_2},
 \quad \frac 1p=\frac 1 {p_1}+\frac 1{p_2},\label{para_1}\\
\| G_2(f,g) \|_{\dot B^{-s}_{p,2}} & \lesssim \| f\|_{\dot B^{-s}_{p_3,2}} \cdot \|g\|_{\dot B^{s_1}_{p_4,\infty}},
\quad s_1>s, \, \frac 1{p_3}+ \frac 1 {p_4} = \frac 1 p+ \frac {s_1}{d}, \label{para_2}\\
\| G_2(f,g) \|_{\dot B^{\sigma}_{p,2}} & \lesssim \| f\|_{\dot
B^{-s}_{p_5,2}} \cdot \| g\|_{\dot B^{s+\sigma}_{p_6,\infty}}, \quad
\frac 1 {p_5} + \frac 1 {p_6} = \frac 1 p.\label{para_3}
\end{align}

\end{lem}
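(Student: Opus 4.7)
The plan is to work directly from the $\ell^2$ characterizations of the Besov norms recalled in Lemma \ref{lem_Besov_def} together with the standard identity $\|h\|_{\dot B^\sigma_{p,2}}^2 \sim \sum_j 2^{2j\sigma}\|\Delta_j h\|_{L^p}^2$, and to exploit the near-orthogonality of the Littlewood--Paley pieces: $\Delta_j \Delta_k \equiv 0$ unless $|j-k|\le 1$, while $\Delta_j(\Delta_k f \cdot \Delta_l g) \equiv 0$ unless the frequency supports combine to an annulus of size $\sim 2^j$. Bounded shifts in indices will be absorbed by harmless multiplicative constants throughout.

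For \eqref{para_1}, I would observe that inside the definition of $G_1$ the outer factor $\Delta_k$ collapses the sum via $\Delta_j\Delta_k$-orthogonality, so $\Delta_j G_1(f,g) = \Delta_j \bigl(\Delta_{\le j+O(1)} f \cdot g\bigr)$ up to finitely many bounded-shift terms. H\"older in $x$ with $\tfrac 1 p = \tfrac 1{p_1} + \tfrac 1{p_2}$ yields $\|\Delta_j G_1\|_{L^p} \lesssim \|\Delta_{\le j+O(1)} f\|_{L^{p_1}}\|g\|_{L^{p_2}}$, and then the representation of $\|f\|_{\dot B^{-s}_{p_1,2}}$ through $\|\Delta_{\le j} f\|_{L^{p_1}}$ in Lemma \ref{lem_Besov_def} closes the bound after squaring, weighting by $2^{-2js}$, and summing in $j$.

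For both $G_2$ estimates I would expand $\Delta_{>j+3}f \cdot \Delta_{\ge j+1}g = \sum_{k>j+3,\, l\ge j+1}\Delta_k f \cdot \Delta_l g$ and use frequency-support considerations to reduce to an essentially diagonal sum $\sum_{k>j}\Delta_k f \cdot \Delta_{k+O(1)}g$: otherwise the product is supported at frequencies $\sim 2^{\max(k,l)} \gg 2^j$ and annihilated by the outer $\Delta_j$. To prove \eqref{para_2}, I apply H\"older in $L^{\tilde p}$ with $\tfrac{1}{\tilde p}=\tfrac{1}{p_3}+\tfrac{1}{p_4}$ followed by Bernstein \eqref{bern} (legal because the output has frequency $\sim 2^j$), picking up exactly $2^{j d(1/\tilde p-1/p)} = 2^{js_1}$ from the hypothesis on the exponents. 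Substituting $\|\Delta_k g\|_{L^{p_4}} \le 2^{-ks_1}\|g\|_{\dot B^{s_1}_{p_4,\infty}}$ and setting $a_k := 2^{-ks}\|\Delta_k f\|_{L^{p_3}}$, $m:=k-j$, the estimate reduces to
\begin{align*}
\sum_j \Bigl(\sum_{m>0} 2^{-m(s_1-s)}\, a_{j+m}\Bigr)^2 \lesssim \|f\|_{\dot B^{-s}_{p_3,2}}^2,
\end{align*}
which follows from Cauchy--Schwarz (or Young's convolution inequality) using $s_1 > s$ to guarantee geometric summability in $m$.

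The proof of \eqref{para_3} is analogous but simpler: no Bernstein is needed because $\tfrac{1}{p_5}+\tfrac{1}{p_6}=\tfrac{1}{p}$, so H\"older gives $\|\Delta_j G_2\|_{L^p} \lesssim \sum_{k>j}\|\Delta_k f\|_{L^{p_5}}\|\Delta_k g\|_{L^{p_6}}$ directly; plugging in $\|\Delta_k g\|_{L^{p_6}} \le 2^{-k(s+\sigma)}\|g\|_{\dot B^{s+\sigma}_{p_6,\infty}}$, weighting by $2^{2j\sigma}$, and summing exploits $\sigma > 0$ in the same fashion. The main technical obstacle is the combinatorial bookkeeping of the high--high interaction in $G_2$ and matching the Bernstein gain in \eqref{para_2} to the scaling gap $s_1/d$ between the two sides of the H\"older condition; once this diagonal reduction is in place, the remaining double sums are standard geometric-series manipulations.
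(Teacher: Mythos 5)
Your proposal is correct and follows essentially the same route as the paper: reduce to the dyadic-block definition of the Besov norms via Lemma \ref{lem_Besov_def}, apply H\"older to each block (with a Bernstein gain of $2^{js_1}$ in the case of \eqref{para_2} to account for the exponent mismatch $1/p_3+1/p_4=1/p+s_1/d$), insert the $\dot B^{s_1}_{p_4,\infty}$ (resp. $\dot B^{s+\sigma}_{p_6,\infty}$) bound on $\|\Delta_k g\|$, and close the resulting off-diagonal sum by Young's convolution inequality in $\ell^2(\mathbb Z)$ using $s_1>s$ (resp. $\sigma>0$). The bookkeeping, the diagonal reduction $|k-k'|\le 2$ in the high--high term, and the final geometric-sum step are all as in the paper's argument.
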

\begin{proof} In the proof of \eqref{para_1}, we use H\"older and
Lemma \ref{lem_Besov_def}. We have
\begin{align*}
\|G_1(f,g)\|_{\dot B^{-s}_{p,2}}&\lsm \biggl(\sum_{j\in \mathbb
Z}\biggl( 2^{-js}\|\triangle_{\le j+3} f
g\|_{p}\biggr)^2\biggr)^{\frac 12}\\
&\lsm \biggl(\sum_{j\in \mathbb Z}\biggl(2^{-js}\|\triangle_{\le
j+3} f\|_{p_1}\biggr)^2\biggr)^{\frac 12}\|g\|_{p_2}\\
&\lsm \|f\|_{\dot B^{-s}_{p_1,2}}\|g\|_{p_2}.
\end{align*}
We now prove \eqref{para_2}. From the definition and the Bernstein estimate
we have
\begin{align*}
\|G_2(f,g)\|_{\dot B^{-s}_{p,2}}&\lsm \biggl(\sum_{j\in\mathbb
Z}\biggl(2^{-js}\|\triangle_j(\triangle_{>j+3} f\triangle_{>j+1}
g)\|_p\biggr)^2\biggr)^{\frac 12}\\
&\lsm \biggl(\sum_{j\in \mathbb Z}\biggl(
2^{-js}2^{js_1}\|\triangle_j (\triangle_{>j+3} f\triangle_{>j+1}
g)\|_{\frac{pd}{d+ps_1}}\biggr)^2\biggr)^{\frac 12}\\
&\lsm \biggl(\sum_{j\in \mathbb
Z}\biggl(2^{j(s_1-s)}\sum_{k>j+3,|k-k'|\le 2}\|\triangle_k
f\|_{p_3}\|\triangle_{k'} g\|_{p_4}\biggr)^2\biggr)^{\frac 12}\\
&\lsm \biggl(\sum_{j\in \mathbb Z}\biggl(\sum_{k>j+3,|k-k'|\le 2}
2^{(j-k)(s_1-s)}2^{-ks}\|\triangle_k f\|_{p_3}
2^{ks_1}\|\triangle_{k'} g\|_{p_4}\biggr)^2\biggr)^{\frac 12}\\
&\lsm \biggl(\sum_{j\in \mathbb Z}\biggl(\sum_{k>j+3}
2^{(j-k)(s_1-s)}2^{-ks}\|\triangle_k
f\|_{p_3}\biggr)^2\biggr)^{\frac 12}\| g\|_{ \dot{B}^{s_1}_{p_4, \infty} }\\
&\lsm \|f\|_{\dot B^{-s}_{p_3,2}}\|g\|_{\dot B^{s_{1}}_{p_4,\infty}}.
\end{align*}
Here in the last line we have used the Young's inequality and the
fact that $s_1>s$.

Next we estimate \eqref{para_3}. We have
\begin{align*}
\|G_2(f,g)\|_{\dot B^{\sigma}_{p,2}}&\lsm \biggl(\sum_{j\in \mathbb
Z}\biggl(2^{j\sigma}\|\triangle_j(\triangle_{>j+3} f\triangle_{>j+1}
g)\|_p\biggr)^2\biggr)^{\frac 12}\\
&\lsm \biggl(\sum_{j\in \mathbb Z}\biggl (\sum_{k>j+3,|k-k'|\le
2}2^{j\sigma}\|\triangle_k f\|_{p_5}\|\triangle_{k'}
g\|_{p_6}\biggr)^2\biggr)^{\frac 12}\\
&\lsm \biggl(\sum_{j\in \mathbb Z}\biggl(\sum_{k>j+3,|k-k'|\le 2}
2^{(j-k)\sigma} 2^{-ks}\|\triangle_k f\|_{p_5}
2^{k(s+\sigma)}\|\triangle_{k'} g\|_{p_6}\biggl)^2\biggl)^{\frac
12}\\
&\lsm \|f\|_{\dot B^{-s}_{p_5,2}}\|g\|_{\dot
B^{s+\sigma}_{p_6,\infty}}.
\end{align*}

\end{proof}

We are now ready to turn our attention to the proof of Theorem $\ref{thm_unconditional}$.

\begin{proof}[Proof of Theorem \ref{thm_unconditional}]
 By the existence component of the local well-posedness result, we can construct a strong solution in $\dot S^1$. Therefore, without loss of generality, we assume $u$ is a strong solution
and satisfies
\begin{align*}
 \| u \|_{\dot S^1(I)} \lesssim C(\|u_0\|_{\dot H^1}, \| u_1\|_{L^2}).
\end{align*}
As before, we may also assume $t_0=0$. Now let $\delta = u -v$.  Clearly,
$\delta$ satisfies the equation
\begin{align*}
 \delta_{tt} -\Delta \delta = F(u) -F(v), \quad \ \delta(0)=0, \, \delta_t (0) =0.
\end{align*}
By the fundamental theorem of calculus, we write
\begin{align*}
F(u) - F(v) & = \delta \int_0^1 F^\prime (\lambda u + (1-\lambda) v) d\lambda \\
 & =\delta \int_0^1 ( F^\prime ((1-\lambda) \delta -u) - F^\prime (u) ) d\lambda + \delta F^\prime(u)\\
& = \delta \cdot H + \delta\cdot  F^\prime (u),
\end{align*}
where in the second equality we have used the fact that $F^\prime$ is an even function. Also due to the H\"older continuity of $F'(z)$, the function $H$ has the pointwise bound
\begin{align}\label{h_ptw}
|H(x)|\lsm |\delta(x)|^{\frac 4{d-2}}.
\end{align}

Let $I_0$ be a small time interval containing $0$. We shall choose $I_0$ sufficiently small later.
Using the Strichartz inequality and the Duhamel formula, we estimate
\begin{align}
 \| \delta \|_{L_t^2 \dot B^{-\frac 1{d-1}}_{\frac{2(d-1)}{d-3}, 2}(I_0 \times \R^d)}
& \lesssim \| G_1(\delta, F^\prime (u)) \|_{L_t^{\frac{2(d+1)}{d+5}}
\dot B^{-\frac 1{d-1}}
_{\frac{2(d^2-1)}{d^2+2d-7},2}(I_0\times \R^d)} \label{term1}\\
&\qquad+ \|G_2(\delta, F'(u))\|_{L_t^{\frac{2(d+1)}{d+5}} \dot
B^{-\frac 1{d-1}} _{\frac{2(d^2-1)}{d^2+2d-7},2}(I_0\times \R^d)}\label{term2}\\
 & \qquad + \| G_1(\delta, H) \|_{L_t^2 \dot B^{\frac 1{d-1}}_{\frac{2(d-1)}{d+1},2}
(I_0\times \R^d)}\label{term3}\\
& \qquad + \| G_2(\delta, H) \|_{L_t^2 \dot B^{\frac 1{d-1}}_{\frac{2(d-1)}{d+1},2}
 (I_0 \times \R^d)},\label{term4}
\end{align}
where $G_1(\cdot,\cdot)$, $G_2(\cdot,\cdot)$ are defined in \eqref{G1G2}.

To estimate \eqref{term1}, we use \eqref{para_1} with
\begin{align*}
s=\frac 1{d-1},\ p=\frac{2(d^2-1)}{d^2+2d-7},\
p_1=\frac{2(d-1)}{d-3},\ p_2=\frac{d+1} 2
\end{align*}
and H\"older in time to get
\begin{align*}
\eqref{term1}&\lsm \|\delta\|_{L_t^2\dot B^{-\frac
1{d-1}}_{\frac{2(d-1)}{d-3},2}(I_0\times\R^d)}\|F'(u)\|_{L_{t,x}^{\frac{d+1}2}(I_0\times\R^d)}\\
&\lsm \|\delta\|_{L_t^2\dot B^{-\frac
1{d-1}}_{\frac{2(d-1)}{d-3},2}(I_0\times\R^d)}
\|u\|_{L_{t,x}^{\frac{2(d+1)}{d-2}}(I_0\times\R^d)}^{\frac 4{d-2}}\\
&\lsm \|\delta\|_{L_t^2\dot B^{-\frac
1{d-1}}_{\frac{2(d-1)}{d-3},2}(I_0\times\R^d)}
\|u\|_{L_t^{\frac{2(d+1)}{d-2}}\dot B^{\frac
d{2(d-1)}}_{\frac{2(d^2-1)}{d^2-2d+3},2}(I_0\times\R^d)}^{\frac
4{d-2}}
\end{align*}
To estimate \eqref{term2}, we use \eqref{para_2} with the same $s,p$
and
\begin{align*}
s_1=\frac 2{d-1},\ p_3=\frac{2(d-1)}{d-3}, \
p_4=\frac{d(d^2-1)}{2(d^2+1)}
\end{align*}
in the space variable. In the time variable, we use the H\"older inequality.  We also use Lemma \ref{simplefact} with $f=F'$.
This gives us
\begin{align*}
\eqref{term2}&\lsm \|\delta\|_{L_t^2\dot B^{-\frac
1{d-1}}_{\frac{2(d-1)}{d-3},2}(I_0\times\R^d)}\|F'(u)\|_{L_t^{\frac{d+1}2}\dot
B^{\frac 2{d-1}}_{\frac{d(d^2-1)}{2(d^2+1)},\infty}(I_0\times\R^d)}\\
&\lsm \|\delta\|_{L_t^2\dot B^{-\frac
1{d-1}}_{\frac{2(d-1)}{d-3},2}(I_0\times\R^d)}\|u\|_{L_t^{\frac{2(d+1)}{d-2}}\dot
B^{\frac{d-2}{2(d-1)}}_{\frac{2d(d^2-1)}{(d-2)(d^2+1)},\infty}(I_0\times\R^d)}^{\frac
4{d-2}}\\
&\lsm \|\delta\|_{L_t^2\dot B^{-\frac
1{d-1}}_{\frac{2(d-1)}{d-3},2}(I_0\times\R^d)}\|u\|_{L_t^{\frac{2(d+1)}{d-2}}\dot
B^{\frac
d{2(d-1)}}_{\frac{2(d^2-1)}{d^2-2d+3},2}(I_0\times\R^d)}^{\frac
4{d-2}}
\end{align*}

To estimate \eqref{term4}, we use \eqref{para_3} with $\sigma=\frac
1{d-1}$, $p=\frac{2(d-1)}{d+1}$, $p_5=\frac{2(d-1)}{d-3}$,
$p_6=\frac{d-1}2$ and H\"older in time to get
\footnote{ A key point of
the following estimate is to separate a portion of ``$\delta$" when estimating $H$. Since $H$ is only bounded pointwise
by $|\delta|^{\frac 4{d-2}}$, instead of estimating $\|H\|_{\dot B^{\frac 2{d-1}}_{\frac{d-1}2,\infty}}$ directly,
we have to use the interpolation inequality to extract a portion of $L_{x}^{\frac d2}$ norm of $H$ which in turn can be bounded by
$L_{x}^{\frac{2d}{d-2}}$-norm of $\delta$. We thank F. Planchon for the correction on the previous text regarding this point.}

\begin{align}
\nonumber \eqref{term4}&\lsm \|\delta\|_{L_t^2 \dot B^{-\frac
1{d-1}}_{\frac{2(d-1)}{d-3},
2}(I_0\times\R^d)}\|H\|_{L_t^{\infty}\dot B^{\frac 2{d-1}}_{\frac
{d-1}2,\infty}(I_0\times\R^d)}\\
\label{eq_1}&\lsm \|\delta\|_{L_t^2 \dot B^{-\frac 1{d-1}}_{\frac{2(d-1)}{d-3},
2}(I_0\times\R^d)} \| H\|^{\frac 12}_{L_t^\infty \dot B^0_{\frac
d2,\infty} (I_0\times \R^d)} \| H \|^{\frac 12}_{L_t^\infty \dot
B^{\frac 4{d-1}}_{\frac {d(d-1)} {2(d+1)},\infty} (I_0 \times \R^d)
} \\
\label{eq_2}&\lsm \|\delta\|_{L_t^2 \dot B^{-\frac 1{d-1}}_{\frac{2(d-1)}{d-3},
2}(I_0\times\R^d)} \lVert |\delta(x)|^\frac{4}{d-2}\rVert_{L_t^\infty L^\frac{d}{2}}^\frac{1}{2} \\
\nonumber &\hspace{0.4in} \cdot\left(\int_0^1 \lVert F'((1-\lambda)\delta -u)-F'(u)\rVert_{L_t^\infty \dot{B}^\frac{4}{d-1}_{\frac{d(d-1)}{2(d+1)},\infty}(I_0\times \mathbb{R}^d)}d\lambda\right)^\frac{1}{2}\\
\nonumber& \lsm \|\delta\|_{L_t^2 \dot B^{-\frac 1{d-1}}_{\frac{2(d-1)}{d-3},
2}(I_0\times\R^d)} \lVert \delta\rVert_{L_t^\infty \dot{H}^1_x}^\frac{2}{d-2}\left(\lVert \delta\rVert_{L_t^\infty \dot{H}^1_x}^\frac{2}{d-2}+\lVert u\rVert_{L_t^\infty \dot{H}^1_x}^\frac{2}{d-2}\right),
\end{align}
where to obtain \eqref{eq_1} we use interpolation, to obtain \eqref{eq_2} we use the pointwise bound \eqref{h_ptw} and the definition of $H$. In the last line we have used Lemma \ref{simplefact} and Sobolev embedding.


Finally we estimate \eqref{term3}. By frequency localization, we further decompose $G_1(\delta,H)$ as
\begin{align}
 G_1(\delta, H) & = \sum_{j\in \mathbb Z} \Delta_j ( \Delta_{\le j+3} \delta \Delta_{\ge j-3} H ) \label{term5} \\
& \quad + \sum_{j \in \mathbb Z} \Delta_j (\Delta_{j-2\le \cdot \le j+3} \delta \Delta_{<j-3} H). \label{term6}
\end{align}
A quick observation shows that \eqref{term5} can be estimated in a similar way as \eqref{term4}. Therefore we have
\begin{align*}
 \eqref{term5} \lsm \|\delta\|_{L_t^2 \dot B^{-\frac 1{d-1}}_{\frac{2(d-1)}{d-3},
2}(I_0\times\R^d)} \cdot \| \delta \|^{\frac 2{d-2}}_{L_t^\infty
\dot H_x^1 (I_0\times \R^d)} \cdot ( \| \delta \|^{\frac
2{d-2}}_{L_t^\infty \dot H_x^1 (I_0\times \R^d)} + \| u \|^{\frac
2{d-2}}_{L_t^\infty \dot H_x^1 (I_0\times \R^d)} ).
\end{align*}
Now we turn to estimating \eqref{term6}. To simplify notation, observe that $\Delta_{j-2\le \cdot \le j+3} \delta$
essentially behaves as $\Delta_j \delta$. Therefore in the estimate below we write $\Delta_j \delta$ in place
of $\Delta_{j-2\le \cdot \le j+3} \delta$. With this convention, we have
\begin{align}
 \eqref{term6} & \lsm \biggl \| \biggl(  \sum_{j\in \mathbb Z} ( 2^{\frac 1{d-1} j}
 \|\Delta_j \delta \|_{\frac {2(d^3-d^2)}{d^3-3d^2+6d-2}} \cdot \| \Delta_{<j-3} H \|_{\frac{d^2}{2d-1}}  )^2
\biggr)^{\frac 12} \biggr \|_{L_t^2(I_0)} \notag \\
& \lsm \biggl\| \|\delta\|_{\dot B^{\frac 1{d-1} + \frac 1d}_{\frac {2(d^3-d^2)}{d^3-3d^2+6d-2}, 2}} \cdot
\| H \|_{\dot B^{-\frac 1{d}}_{\frac {d^2} {2d-1}, \infty}} \biggr\|_{L_t^2(I_0)}. \label{eq_1030_a}
\end{align}
By embedding we have
\begin{align*}
 \| H \|_{\dot B^{-\frac 1{d}}_{\frac {d^2} {2d-1}, \infty}} &\lsm \| H \|_{L_x^{\frac d2}}
\lsm \| \delta^{\frac 4{d-2}} \|_{L_x^{\frac d2}} \\
& \lsm \| \delta \|^{\frac 4{d-2}}_{\dot B^{\frac 1{d-1} + \frac 1d}_{\frac {2(d^3-d^2)}{d^3-3d^2+6d-2}, 2}}.
\end{align*}
By interpolation we have
\begin{align*}
 \|\delta\|_{\dot B^{\frac 1{d-1} + \frac 1d}_{\frac {2(d^3-d^2)}{d^3-3d^2+6d-2}, 2}}
\lsm \| \delta \|^{1+\frac 1{d^2} -\frac 3d}_{\dot B^{-\frac 1{d-1}}_{\frac{2(d-1)}{d-3},2}} \cdot
\| \delta\|_{\dot H^1}^{\frac 3d - \frac 1{d^2}}.
\end{align*}
Therefore
\begin{align*}
 \eqref{eq_1030_a} & \lsm \biggl\|
\|\delta \|_{\dot B^{\frac 1{d-1} + \frac 1d}_{\frac {2(d^3-d^2)}{d^3-3d^2+6d-2}, 2}}^{\frac{d+2}{d-2}}
\biggr\|_{L_t^2(I_0)} \\
& \lsm \biggl \| \| \delta \|^{(1+\frac 1{d^2} -\frac 3 d)
\cdot \frac{d+2}{d-2} }_{\dot B^{-\frac 1{d-1}}_{\frac{2(d-1)}{d-3},2}}
\cdot \| \delta \|_{\dot H^1}^{(\frac 3d-\frac 1{d^2})\cdot \frac{d+2}{d-2}} \biggr\|_{L_t^2(I_0)} \\
& \lsm \|
\delta \|_{L_t^2 \dot B^{-\frac 1{d-1}}_{\frac{2(d-1)}{d-3},2} (I_0\times \R^d)}
\cdot \| \delta\|^{\frac 4{d-2}}_{L_t^\infty \dot H^1(I_0 \times \R^d)}.
\end{align*}

Collecting all the estimates, we get
\begin{align*}
 & \|\delta \|_{L_t^2 \dot B^{-\frac 1{d-1}}_{\frac{2(d-1)}{d-3},2} (I_0 \times \R^d)} \\
\lsm &\|\delta \|_{L_t^2 \dot B^{-\frac
1{d-1}}_{\frac{2(d-1)}{d-3},2} (I_0 \times \R^d)} \cdot \biggl (
\|\delta \|_{L_t^\infty \dot H^1 (I_0\times \R^d)}^{\frac 4{d-2}}+
\|u\|_{L_t^{\frac{2(d+1)}{d-2}}\dot B^{\frac
d{2(d-1)}}_{\frac{2(d^2-1)}{d^2-2d+3},2}(I_0\times\R^d)}^{\frac
4{d-2}} \\
& \qquad+\| \delta \|^{\frac 2{d-2}}_{L_t^\infty \dot H_x^1
(I_0\times \R^d)}  \cdot \| u \|^{\frac 2{d-2}}_{L_t^\infty \dot
H_x^1 (I_0\times \R^d)}\biggr).
\end{align*}
Observing that $\delta \in C(I_0, \dot H^1)$, $\delta(0)=0$ and noting the boundedness
of
\begin{align*}
\|u\|_{L_t^{\frac{2(d+1)}{d-2}}\dot
B^{\frac{d}{2(d-1)}}_{\frac{2(d^2-1)}{d^2-2d+3},2}(I_0\times\R^d)}^{\frac{4}{d-2}},
\end{align*}
 we conclude that for $I_0$ sufficiently small, $\delta=0$ on $I_0$. A simple bootstrap argument then yields
that $\delta = 0$ on the whole interval $I$. The theorem is proved.
\end{proof}

\begin{rem} \label{rem1043}
Interestingly, the proof of unconditional uniqueness also provides a proof of local well-posedness in
high dimensions $d\ge 5$. We briefly sketch the argument as follows. Define the map
\begin{align*}
\phi(u) = K(t) (u_0,u_1) + \int_0^t \frac{\sin ((t-\tau) \sqrt{-\Delta})}{\sqrt{-\Delta}}F(u(\tau)) d\tau.
\end{align*}
Let $\delta>0$ (to be fixed later) and choose the time interval $I$ sufficiently small such that
\begin{align*}
\| K(t) (u_0,u_1) \|_{L_t^2 \dot B^{-\frac 1{d-1}}_{\frac{2(d-1)}{d-3},2} (I)}
+\| K(t) (u_0,u_1) \|_{L_t^{\frac{2(d+1)}{d-1}} \dot B^{\frac 12}_{\frac{2(d+1)}{d-1},2} (I)} \\
+\| K(t) (u_0,u_1) \|_{L_t^{\frac{2(d+1)}{d-2}} \dot B^{\frac {d}{2(d-1)}}_{\frac{2(d^2-1)}{d^2-2d+3},2} (I)}
\le \delta.
\end{align*}
Then consider the ball
\begin{align*}
B_1 = \biggl \{ u \in \dot S^1(I):\;
 \|u\|_{L_t^2 \dot B^{-\frac 1{d-1}}_{\frac{2(d-1)}{d-3},2}(I)} \le 2\delta,
\|u\|_{L_t^{\frac{2(d+1)}{d-2}} \dot B^{\frac{d}{2(d-1)}}_{\frac{2(d^2-1)}{d^2-2d+3},2} (I)} \le 2\delta, \\
\, \text{and}\quad
\|u\|_{L_t^{\frac{2(d+1)}{d-1}} \dot B^{\frac 12}_{\frac{2(d+1)}{d-1},2} (I)} \le 2 \delta \biggr\}.
\end{align*}
It is not difficult to check that $\phi$ maps $B_1$ into $B_1$ for $\delta$ sufficiently small.
Furthermore by using estimates similar to \eqref{term1}, \eqref{term2},
we have
\begin{align*}
&\| \phi(u) -\phi(v) \|_{L_t^2\dot B^{-\frac 1{d-1}}_{\frac{2(d-1)}{d-3},2} (I)} \\
\lsm & \| u-v\|_{L_t^2\dot B^{-\frac 1{d-1}}_{\frac{2(d-1)}{d-3},2} (I)}
\cdot ( \|u \|^{\frac 4{d-2}}_{L_t^{\frac{2(d+1)}{d-2}} \dot B^{\frac{d}{2(d-1)}}_{\frac{2(d^2-1)}{d^2-2d+3},2} (I)}
+\|v \|^{\frac 4{d-2}}_{L_t^{\frac{2(d+1)}{d-2}} \dot B^{\frac{d}{2(d-1)}}_{\frac{2(d^2-1)}{d^2-2d+3},2} (I)}) \\
\lsm & \delta^{\frac 4{d-2}} \cdot \| u-v\|_{L_t^2\dot B^{-\frac 1{d-1}}_{\frac{2(d-1)}{d-3},2} (I)},
\end{align*}
for all $u$, $v\in B_1$. This shows that $\phi$ is a contraction on $B_1$ if $\delta$ is sufficiently small and therefore we can find a unique solution
in $B_1$.
\end{rem}

We conclude this section by giving the proof of Lemma $\ref{blowup_criterion_lemma}$.
\begin{proof}[Proof of Lemma $\ref{blowup_criterion_lemma}$]
Denote $L=\|u\|_{S([t_0,T_0])}$. We divide the proof into two steps.

Step 1. We show that
\begin{align}
\|u\|_{\dot S^1([t_0,T_0])} \le A:=C(L,d) \cdot (\lVert u_0\rVert_{\dot{H}^1}+\lVert u_1\rVert_{L^2}). \label{eq956}
\end{align}

 let $\xi>0$ be given (to be fixed later in the argument).  First, we partition $[t_0,T_0]$ into $N=N(L,\xi,d)$ intervals $I_j=[t_j,t_{j+1}]$ such that
\begin{align*}
\lVert u\rVert_{S(I_j)}\leq \xi.
\end{align*}
Then by the Strichartz inequality, we get
\begin{align*}
\lVert u\rVert_{\dot{S}^1(I_j)}&\lesssim \lVert u(t_j)\rVert_{\dot{H}^1}+\lVert \partial_t u(t_j)\rVert_{L^2}+\lVert u\rVert_{S(I_j)}^\frac{4}{d-2}\lVert u\rVert_{\dot{S}^1(I_j)}\\
&\lesssim \lVert u(t_j)\rVert_{\dot{H}^1}+\lVert \partial_t u(t_j)\rVert_{L^2}+\xi^\frac{4}{d-2}\lVert u\rVert_{\dot{S}^1(I_j)}
\end{align*}
 Thus,
\begin{align*}
\lVert u\rVert_{\dot{S}^1(I_j)}\lesssim \lVert u(t_j)\rVert_{\dot{H}^1}+\lVert \partial_t u(t_j)\rVert_{L^2}
\end{align*}
for $\xi$ sufficiently small. A simple induction then shows that
\begin{align*}
\lVert u\rVert_{\dot{S}^1([t_0,T_0])}\leq C(L,d)\cdot (\lVert u_0\rVert_{\dot{H}^1}+\lVert u_1\rVert_{L^2}).
\end{align*}

Step 2. By the local well-posedness Theorem \ref{lwpthm}, it is enough to show the existence of $\epsilon$ and $\delta$
such that
\begin{align}
\| K(t-(T_0-\epsilon)) (u(T_0-\epsilon), (\partial_t u)(T_0-\epsilon)) \|_{S(
 [T_0-\epsilon, T_0+\delta]) }
\le \eta, \label{eq1055p}
\end{align}
where $\eta=\eta(A)$ is sufficiently small (specified by Theorem \ref{lwpthm}). We first estimate the piece on $[T_0-\epsilon, T_0]$, i.e.
\begin{align}
\| K(t-(T_0-\epsilon)) (u(T_0-\epsilon), (\partial_t u)(T_0-\epsilon)) \|_{S(
 [T_0-\epsilon, T_0])} \label{eq938}
\end{align}
Using Duhamel, Strichartz and \eqref{eq956}, we get
\begin{align*}
\eqref{eq938} &\lesssim \| u\|_{S([T_0-\epsilon,T_0])} +\| |\nabla|^{\frac 12} F(u) \|_{L_{t,x}^{\frac{2(d+1)}{d+3}}([T_0-\epsilon,T_0])} \\
&\lesssim \|u\|_{S([T_0-\epsilon, T_0])} + \|u\|_{\dot S^1([T_0-\epsilon, T_0])}
\cdot \|u\|^{\frac 4{d-2}}_{S([T_0-\epsilon,T_0])} \\
&\lesssim \|u\|_{S([T_0-\epsilon, T_0])} + A \cdot \|u\|^{\frac 4{d-2}}_{S([T_0-\epsilon,T_0])}. \\
\end{align*}
Clearly we can choose $\epsilon$ sufficiently small to obtain
\begin{align*}
\eqref{eq938} \le \frac {\eta}2.
\end{align*}
Now since $\epsilon$ is fixed, by Lebesgue monotone convergence, there exists $\delta$ sufficiently small, such that
\begin{align*}
\| K(t-(T_0-\epsilon)) (u(T_0-\epsilon), (\partial_t u)(T_0-\epsilon)) \|_{S(
 [T_0, T_0+\delta])} \le \frac {\eta}2.
\end{align*}
Therefore by adding the two pieces together we have proved \eqref{eq1055p}. By Theorem $\ref{lwpthm}$, it follows
that there exists a unique solution $v$ to (NLW) on $[T_0-\epsilon,T_0+\delta]$ with $v(T_0-\epsilon)=u(T_0-\epsilon)$.
We then use unconditional uniqueness, Theorem $\ref{thm_unconditional}$, to see that $u=v$ on $[t_0,T_0]$ and thus $v$ gives the desired extension.
\end{proof}

\section{Long time perturbation} \label{sec-perturbation}

In this section, we prove a long-time perturbation result for (NLW).
We start with the following short-time perturbation theorem.

\begin{thm}[Short time perturbation] \label{thm_short}
Let $(u_0, u_1) \in \dot H^1 \times L^2$. Let $\tilde u$ be a near solution in the following sense
\begin{align*}
 \partial_{tt} \tilde u -\Delta \tilde u= F(\tilde u) +e
\end{align*}
such that
\begin{enumerate}
\item[(a)]
\begin{align*}
     \|\tilde u_0 -u_0 \|_{\dot H^1}+ \| \tilde u_1 -u_1\|_{L^2} \le A^\prime
    \end{align*}

\item[(b)] Smallness:
\begin{align*}
 \| K(t) ( u_0 -\tilde u_0, u_1-\tilde u_1 ) \|_{X(I) } \le \epsilon,
\end{align*}
\begin{align*}
 \| |\nabla|^{\frac 12} e \|_{L_{t,x}^{\frac{2(d+1)}{d+3}}(I)} \le \epsilon,
\end{align*}
\begin{align*}
 \|\tilde u\|_{X(I)} +\| \tilde u \|_{W(I)} + \| \tilde u \|_{S(I)}
 + \| \tilde u \|_{Y(I)}\le \delta.
\end{align*}
\end{enumerate}
Then for $0<\delta \le \delta_0(d)$, $0<\epsilon \le \epsilon_0(A^\prime)$,
(NLW) with initial data $(u_0,u_1)$, there exists unique
$u$ on $I\times \R^d$ such that
\begin{align}
\|u-\tilde u \|_{\dot S^1} & \le C(d) \cdot A^\prime, \label{e10a}\\
\| u-\tilde u \|_{X(I)} &\le C(d,A^\prime) \cdot \epsilon, \label{e10b} \\
\|F(u) -F(\tilde u) \|_{X^\prime(I)} & \le C(d,A^\prime) \cdot \epsilon. \label{e10c}
\end{align}
\end{thm}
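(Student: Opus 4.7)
The natural approach is to run a contraction argument for the difference $w := u - \tilde u$, which by subtracting the two equations satisfies
\begin{equation*}
\partial_{tt} w - \Delta w = F(\tilde u + w) - F(\tilde u) - e, \qquad w(0) = u_0 - \tilde u_0, \quad w_t(0) = u_1 - \tilde u_1.
\end{equation*}
Its Duhamel formulation defines a map $\Phi$; I would produce a fixed point in the closed ball
\begin{equation*}
B := \bigl\{ w : \|w\|_{\dot S^1(I)} \le C_0 A', \ \|w\|_{X(I)} \le C_1(d,A')\,\epsilon \bigr\},
\end{equation*}
equipped with the metric inherited from $\dot S^1(I) \cap X(I)$. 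Two a priori observations will be used repeatedly: for $w \in B$, Lemma \ref{lem3}(b) gives $\|w\|_{S(I)} \lesssim \|w\|_X^{\theta_2} \|w\|_W^{1-\theta_2} \lesssim \epsilon^{\theta_2}(A')^{1-\theta_2}$, which is small in $\epsilon$, and Lemma \ref{lem3}(e) gives $\|w\|_{Y(I)} \lesssim \|w\|_{\dot S^1(I)} \lesssim A'$.

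To verify $\Phi(B) \subset B$ I would estimate $\Phi(w)$ in the two defining norms. The standard Strichartz estimate (Lemma \ref{lem1}) combined with \eqref{e218c} yields
\begin{equation*}
\|\Phi(w)\|_{\dot S^1(I)} \lesssim A' + \epsilon + \|w\|_{W(I)}\bigl( \|w\|_{S(I)}^{4/(d-2)} + \|\tilde u\|_{S(I)}^{4/(d-2)} \bigr) + \|w\|_{S(I)}^{4/(d-2)}\|\tilde u\|_{W(I)},
\end{equation*}
and the smallness of $\|w\|_{S(I)}$ together with $\|\tilde u\|_{S(I)},\|\tilde u\|_{W(I)} \le \delta$ reduces the last two terms to a small fraction of $A'$, yielding $\|\Phi(w)\|_{\dot S^1(I)} \le C_0 A'$ for some $C_0 = C_0(d)$. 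For the $X$ norm I split the Duhamel integral into the nonlinear part, handled by the exotic Strichartz estimate of Lemma \ref{lem4}, and the error part, handled by the embedding $\dot S^1 \hookrightarrow X$ of Lemma \ref{lem3}(e):
\begin{equation*}
\|\Phi(w)\|_{X(I)} \lesssim \epsilon + \|F(\tilde u + w) - F(\tilde u)\|_{X'(I)} + \||\nabla|^{1/2} e\|_{W'(I)}.
\end{equation*}
Applying \eqref{e218b} with the pair $(\tilde u + w,\tilde u)$, every summand acquires a small prefactor (a positive power of $\delta$ or of $\epsilon^{\theta_2}$), so the nonlinear contribution is $o(1)\cdot\|w\|_{X(I)} + C(d,A')\epsilon$, giving $\|\Phi(w)\|_{X(I)} \le C_1(d,A')\,\epsilon$. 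The same estimates applied to $\Phi(w_1) - \Phi(w_2)$, whose Duhamel forcing is $F(\tilde u + w_1) - F(\tilde u + w_2)$, deliver a contraction on $B$ in the sum of the two norms. The unique fixed point $w = u - \tilde u$ then satisfies \eqref{e10a}--\eqref{e10b}, and \eqref{e10c} follows by one final application of \eqref{e218b} to this $w$.

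The main technical obstacle is the bookkeeping in \eqref{e218b}. Because $F'$ is only H\"older continuous of order $4/(d-2) < 1$ in the dimensions considered, \eqref{e218b} contains the factor $\|w\|_{\dot S^1(I)}^{(1-\theta_4)\cdot 4/d}$, which is merely \emph{bounded} (by $(A')^{(1-\theta_4)\cdot 4/d}$) rather than small. Closing the contraction requires this factor to always appear multiplied by a small quantity of the form $(\|w\|_{S(I)} + \|\tilde u\|_{S(I)})^{8/(d(d-2))}$ or a power of $\|\tilde u\|_{X(I)}^{\theta_4}\|\tilde u\|_{Y(I)}^{1-\theta_4}$, which is exactly the structure engineered in Lemma \ref{lem4a}. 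A secondary subtlety is the hierarchy of smallness parameters: one first fixes $\delta_0 = \delta_0(d)$, and only afterwards chooses $\epsilon_0 = \epsilon_0(A')$ small enough that the $A'$-dependent constants arising from the interpolation steps are dominated by the $\delta$-powers appearing in the nonlinear estimate.
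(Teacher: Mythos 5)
Your overall strategy differs from the paper's: you set up a contraction mapping for $w = u-\tilde u$, whereas the paper assumes $u$ exists (provided by Theorem~\ref{lwpthm}), derives the two coupled a priori inequalities \eqref{e104_a} and \eqref{e104_b}, and then closes them by a continuity/bootstrap argument. Both routes lean on the same engine — Lemma~\ref{lem4} and the nonlinear estimates \eqref{e218b}--\eqref{e218c}, together with the interpolation step $\|w\|_{S}\lesssim\|w\|_{X}^{\theta_{2}}\|w\|_{W}^{1-\theta_{2}}$ that converts the $X$-smallness into $S$-smallness — and the hierarchy of parameters you describe (fix $\delta_{0}(d)$ first, then $\epsilon_{0}(A')$) matches the paper's. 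What your construction buys is that it produces the solution directly rather than importing existence from elsewhere; the paper's approach is more modular, since the same a priori bounds also feed the long-time iteration in Theorem~\ref{thm_long1}.

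There is one genuine gap: you claim a contraction ``in the sum of the two norms'' from $\dot S^{1}(I)\cap X(I)$. The map is \emph{not} Lipschitz in $\dot S^{1}$. Inspecting \eqref{e218c} applied to $u=\tilde u + w_{1}$, $w=\tilde u + w_{2}$, the second term contributes
$\|w_{1}-w_{2}\|_{S}^{4/(d-2)}\,\|\tilde u + w_{2}\|_{W}$,
which is H\"older of order $\tfrac{4}{d-2}<1$ in the difference and multiplied by $\|\tilde u + w_{2}\|_{W}\lesssim\delta+A'$, which need not be small. This is exactly the obstruction discussed in the introduction (the nonlinearity is not Lipschitz in the standard Strichartz space for $d\geq 6$), and it is the reason the exotic space $X$ is introduced: in $X$ the estimate \eqref{e218b} is genuinely linear in $\|u-w\|_{X}$. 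The fix is the one the paper itself uses in proving Theorem~\ref{lwpthm}: run the contraction in the $X$-metric only, show that the ball is stable under $\Phi$ in \emph{both} $X$ and $\dot S^{1}$ (as you correctly do), and upgrade $\dot S^{1}$-boundedness of the iterates to $\dot S^{1}$-membership of the limit via weak compactness and Strichartz, rather than by trying to contract there. A secondary, more cosmetic slip: $\|w\|_{Y}\lesssim\|w\|_{\dot S^{1}}$ is not item (e) of Lemma~\ref{lem3} (that item gives $\dot S^{1}\hookrightarrow X$); the embedding into $Y$ is used in the proof of Theorem~\ref{lwpthm} but should be cited as such, and the quantity controlling the Duhamel term for $e$ is $\||\nabla|^{1/2}e\|_{L_{t,x}^{2(d+1)/(d+3)}}$, not $\||\nabla|^{1/2}e\|_{W'}$.
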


\begin{proof}
Assume $u$ exists on $I$. Then
\begin{align*}
 \partial_{tt} (\tilde u - u) -\Delta (\tilde u - u)= F(\tilde u) - F(u) +e.
\end{align*}
Now
\begin{align}
 \| \tilde u - u \|_{X(I)} & \lesssim  \| K(t) (u_0-\tilde u_0, u_1 -\tilde u_1) \|_{X(I)}  \label{e1} \\
& \qquad+ \left\| \int_0^t \frac{ \sin ((t-\tau)\sqrt{-\Delta})} {\sqrt{-\Delta}}
e(\tau) d\tau \right\|_{X(I)} \label{e2} \\
& \qquad + \left \| \int_0^t  \frac{ \sin ((t-\tau)\sqrt{-\Delta})} {\sqrt{-\Delta}}
(F(\tilde u) - F(u)) d\tau \right\|_{X(I)} \label{e3}.
\end{align}
The estimate of \eqref{e1} follows from the assumption and we have
\begin{align}
 \eqref{e1} \lesssim \epsilon.  \label{etmp_1}
\end{align}
To estimate \eqref{e2}, we use Lemma \ref{lem3} and Strichartz,
\begin{align}
 \eqref{e2} & \lesssim \left\| \int_0^t  \frac{ \sin ((t-\tau)\sqrt{-\Delta})} {\sqrt{-\Delta}}
e(\tau) d\tau \right\|_{\dot S^1(I) } \notag\\
& \lesssim \left\| |\nabla|^{\frac 12} e \right\|_{L_{t,x}^{\frac {2(d+1)}{d+3}} (I)} \notag\\
& \lesssim \epsilon. \label{etmp_2}
\end{align}
To estimate \eqref{e3}, we use Lemma \ref{lem4}, Lemma \ref{lem4a} and Lemma \ref{lem3} to get
\begin{align}
 \eqref{e3} & \lesssim \| F(u) -F(\tilde u) \|_{X^\prime(I)} \notag\\
& \lesssim
\| u-\tilde u\|_{X(I)} \cdot ( \|u-\tilde u\|^{\frac 4{d-2}}_{S(I)} + \|\tilde u\|^{\frac 4{d-2}}_{S(I)} )
 \notag \\
& \quad+ \|u-\tilde u\|_{X(I)} \cdot ( \|u-\tilde u\|_{X(I)}^{\theta_4} \cdot \|u-\tilde u\|_{\dot S^1(I)}^{1-\theta_4} +
\|\tilde u\|_{X(I)}^{\theta_4} \cdot \| \tilde u\|_{Y(I)}^{1-\theta_4} )^{\frac 4d} \notag\\
& \qquad \cdot ( \|u-\tilde u\|_{S(I)} +  \|\tilde u\|_{S(I)} )^{\frac 8{d(d-2)}} \notag\\
& \lesssim
\| u-\tilde u\|_{X(I)} \cdot ( \|u-\tilde u\|^{\frac 4{d-2} \theta_2 }_{X(I)} \cdot
\|u-\tilde u\|^{\frac 4{d-2} (1-\theta_2) }_{W(I)}+ \delta^{\frac 4{d-2}} ) \notag\\
& \quad+ \|u-\tilde u\|_{X(I)} \cdot ( \|u-\tilde u\|_{X(I)}^{\theta_4} \cdot \|u-\tilde u\|_{\dot S^1(I)}^{1-\theta_4} +
\delta)^{\frac 4d} \notag\\
& \qquad \cdot ( \|u-\tilde u\|^{ \theta_2 }_{X(I)} \cdot
\|u-\tilde u\|^{ 1-\theta_2 }_{W(I)}+ \delta)^{\frac 8{d(d-2)}}.
\label{etmp_3}
\end{align}
Collecting the estimates \eqref{etmp_1}, \eqref{etmp_2} and \eqref{etmp_3} and using the
fact that $\dot S^1 \hookrightarrow W$, we obtain
\begin{align}
&\| \tilde u-  u\|_{X(I)} \notag\\
&\hspace{0.2in} \lesssim\;   \epsilon
 + \|u-\tilde u\|_{X(I)} \cdot (\|u-\tilde u\|_{X(I)}^{\frac 4{d-2} \theta_2} \cdot
     \|u-\tilde u\|^{\frac 4{d-2}(1-\theta_2)}_{\dot S^1(I)} + \delta^{\frac 4{d-2}} )
\notag\\
&\hspace{0.4in} + \|u-\tilde u\|_{X(I)} \cdot ( \|u-\tilde u\|_{X(I)}^{\theta_4} \cdot \|u-\tilde u\|_{\dot S^1(I)}^{1-\theta_4} +
\delta)^{\frac 4d} \notag\\
&\hspace{0.4in}\cdot ( \|u-\tilde u\|^{ \theta_2 }_{X(I)} \cdot
\|u-\tilde u\|^{ 1-\theta_2 }_{\dot S^1(I)}+ \delta)^{\frac 8{d(d-2)}}.
\label{e104_a}
\end{align}
This is the first estimate we need. Next we estimate $\|\tilde u - u \|_{\dot S^1(I)}$. By the Strichartz inequality and
Lemma \ref{lem4a}, we have
\begin{align}
&\|\tilde u -u \|_{\dot S^1(I)} \notag \\
&\hspace{0.2in} \lesssim   \;A^\prime+\epsilon + \| F(\tilde u)-F(u) \|_{W^\prime(I)} \notag \\
&\hspace{0.2in}\lesssim  \; A^\prime + \epsilon+ \| \tilde u-u \|_{W(I)} \cdot ( \|\tilde u-u\|^{\frac 4{d-2}}_{S(I)} +
\|\tilde u\|^{\frac 4{d-2}}_{S(I)} )
 + \|\tilde u -u\|^{\frac 4{d-2}}_{S(I)} \cdot \|\tilde u\|_{W(I)} \notag \\
&\hspace{0.2in} \lesssim   \; A^\prime + \epsilon+ \|\tilde u-u\|_{\dot S^1(I)} \cdot
 ( \|\tilde u -u\|_{X(I)}^{\frac 4{d-2}\theta_2} \|\tilde u-u\|_{\dot S^1(I)}^{\frac 4{d-2}(1-\theta_2)}
 +\delta^{\frac 4{d-2}} ) \notag \\
& \hspace{0.4in} + \|\tilde u -u\|_{X(I)}^{\frac 4{d-2} \theta_2 } \cdot
\|\tilde u -u\|_{\dot S^1(I)}^{\frac 4{d-2} (1-\theta_2)} \cdot \delta. \label{e104_b}
\end{align}
Now by \eqref{e104_a}, \eqref{e104_b} and a continuity argument, we get \eqref{e10a}, \eqref{e10b}
for sufficiently small $\epsilon \le \epsilon_0(A^\prime) $ and $\delta \le \delta_0(d)$. We stress here
that $\delta$ can be chosen to depend only on the dimension $d$. Plugging the estimates \eqref{e10a}, \eqref{e10b}
into \eqref{etmp_3}, we also obtain \eqref{e10c}. The theorem is proved.
\end{proof}

Theorem \ref{thm_short} treats the case when $\epsilon \ll A^\prime$. In such a case all the constants in
\eqref{e10b}--\eqref{e10c} depend on $A^\prime$. One may wonder what happens when $A^\prime$ is of the same
order as $\epsilon$. In that case, similar arguments as in the proof of Theorem \ref{thm_short} give
the following corollary.
\begin{cor}[Short time perturbation, $\epsilon$-perturbation version] \label{cor_short}
 Let $(u_0, u_1) \in \dot H^1 \times L^2$. Let $\tilde u$ be a near solution in the following sense
\begin{align*}
 \partial_{tt} \tilde u -\Delta \tilde u= F(\tilde u) +e
\end{align*}
such that
\begin{enumerate}
\item[(a)]
 \begin{align*}
     \|\tilde u_0 -u_0 \|_{\dot H^1}+ \| \tilde u_1 -u_1\|_{L^2} \le \epsilon
    \end{align*}

\item[(b)] Smallness:
\begin{align*}
 \| |\nabla|^{\frac 12} e \|_{L_{t,x}^{\frac{2(d+1)}{d+3}}(I)} \le \epsilon,
\end{align*}
\begin{align*}
 \|\tilde u\|_{X(I)} +\| \tilde u \|_{W(I)} + \| \tilde u \|_{S(I)}
 + \| \tilde u \|_{Y(I)}\le \delta.
\end{align*}
\end{enumerate}
Then for $0<\delta \le \delta_0(d)$, $0<\epsilon \le \epsilon_0(d)$,
(NLW) with initial data $(u_0,u_1)$, there exists unique
$u$ on $I\times \R^d$ such that
\begin{align}
\|u-\tilde u \|_{\dot S^1} & \le C(d) \cdot \epsilon^{c}, \label{e10aa}\\
\| u-\tilde u \|_{X(I)} &\le C(d) \cdot \epsilon, \label{e10ab} \\
\|F(u) -F(\tilde u) \|_{X^\prime(I)} & \le C(d) \cdot \epsilon. \label{e10ac}
\end{align}
Here $0<c<1$ is a constant depending only on the dimension $d$.
\end{cor}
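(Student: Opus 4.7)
The plan is to mirror the proof of Theorem \ref{thm_short} essentially verbatim, specializing to $A' = \epsilon$ while tracking constants to ensure the smallness thresholds $\delta_0$ and $\epsilon_0$ depend only on the dimension $d$. Setting $w = u - \tilde u$ (with existence justified by a standard continuity/bootstrap argument on subintervals), the difference satisfies $w_{tt} - \Delta w = F(u) - F(\tilde u) - e$ with initial data bounded by $\epsilon$ in $\dot H^1 \times L^2$. The only structural difference from Theorem \ref{thm_short} is that the linear flow of the difference, previously estimated via the explicit assumption \eqref{e133b}, is now controlled by Strichartz (Lemma \ref{lem1}) and the embedding $\dot S^1 \hookrightarrow X$ of Lemma \ref{lem3}(e):
\[
\|K(t)(u_0 - \tilde u_0, u_1 - \tilde u_1)\|_{X(I)} \lesssim \|K(t)(u_0 - \tilde u_0, u_1 - \tilde u_1)\|_{\dot S^1(I)} \lesssim \epsilon.
\]

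Applying Lemma \ref{lem4} (exotic Strichartz) and Lemma \ref{lem4a}, estimate \eqref{e218b}, to the Duhamel representation of $w$, one reproduces exactly the inequality \eqref{e104_a} from the proof of Theorem \ref{thm_short}. Similarly, applying standard Strichartz with \eqref{e218c} yields \eqref{e104_b}, but with the $A'$ term replaced by $\epsilon$. The crucial observation is that every nonlinear contribution in these inequalities carries a positive power of either $\delta$ or $\|w\|_X$ (and hence of $\epsilon$ under the bootstrap hypothesis). Consequently, choosing $\delta_0(d)$ first to absorb all terms involving $\tilde u$, and then $\epsilon_0(d)$ small enough to absorb the $w$-terms, the bootstrap closes and yields $\|w\|_{X(I)} \le C(d)\epsilon$ with a constant depending only on $d$. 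This gives \eqref{e10ab}.

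For \eqref{e10aa}, substituting $A' = \epsilon$ into \eqref{e104_b} produces a source term of the form
\[
\|w\|_{X(I)}^{\frac{4\theta_2}{d-2}} \cdot \|w\|_{\dot S^1(I)}^{\frac{4(1-\theta_2)}{d-2}} \cdot \delta,
\]
which, using $\|w\|_{X(I)} \lesssim \epsilon$ from the previous step and the a priori bootstrap bound $\|w\|_{\dot S^1(I)} \lesssim 1$, is dominated by $\epsilon^{c}\delta$ for $c := \frac{4\theta_2}{d-2} \in (0,1)$. Together with the remaining $\|w\|_{\dot S^1}$-terms (which can be absorbed for $\delta, \epsilon$ small), this yields $\|w\|_{\dot S^1(I)} \le C(d)\epsilon^{c}$. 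Finally, \eqref{e10ac} is obtained by inserting the bounds \eqref{e10aa} and \eqref{e10ab} into \eqref{e218b}.

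The main obstacle is the bookkeeping: one must verify that the constants emerging from the bootstrap truly do not depend on any parameter other than $d$. This is exactly where the corollary diverges qualitatively from Theorem \ref{thm_short}, in which the $X$-bound constant was forced to depend on $A'$ because the term $\|w\|_X^{\theta_4}\|w\|_{\dot S^1}^{1-\theta_4}$ could only be controlled via the a priori bound $\|w\|_{\dot S^1} \lesssim A'$. With $A' = \epsilon$ now at our disposal, this same term contributes a quantity of the form $(\epsilon^{\theta_4}\epsilon^{1-\theta_4} + \delta)^{4/d} = (\epsilon + \delta)^{4/d}$, which is small purely in terms of $d$-dependent parameters. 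Hence the continuity argument closes with $\epsilon_0 = \epsilon_0(d)$ and $\delta_0 = \delta_0(d)$, as claimed.
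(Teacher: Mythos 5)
Your proof is correct and follows the paper's own approach, which simply asks to repeat the derivation of \eqref{e104_a}--\eqref{e104_b} with $A'=\epsilon$: you rightly observe that the $X$-norm smallness of the linear difference, a standalone hypothesis in Theorem~\ref{thm_short}, now follows from Strichartz together with the embedding $\dot S^1\hookrightarrow X$, and that consequently the absorption thresholds $\delta_0,\epsilon_0$ may be taken to depend only on $d$. One cosmetic slip in the last paragraph: you write $\epsilon^{\theta_4}\epsilon^{1-\theta_4}$ as if $\|w\|_{\dot S^1}\lesssim\epsilon$, while your own derivation only gives $\|w\|_{\dot S^1}\lesssim\epsilon^{c}$; this does not affect the conclusion since the resulting power of $\epsilon$ is still positive.
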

\begin{proof}
One only needs to repeat the derivation of \eqref{e104_a} and \eqref{e104_b} as in
the proof of Theorem \ref{thm_short}. We omit the details.
\end{proof}

Next we establish the long time perturbation in Besov spaces by using the short time perturbation result, Theorem
\ref{thm_short}.

\begin{thm}[Long time perturbation, Besov version] \label{thm_long1}
Assume $\tilde u$ is a near solution on  $I\times \R^d$
\begin{align*}
 \partial_{tt} \tilde u -\Delta\tilde u= F(\tilde u) +e
\end{align*}
such that
\begin{enumerate}
\item[(a)]
 \begin{align*}
    \| \tilde u\|_{\dot S^1(I)} \le E.
   \end{align*}

\item[(b)]
\begin{align*}
    \|\tilde u_0 -u_0 \|_{\dot H^1} + \| \tilde u_1 -u_1 \|_{L^2} \le E^\prime.
   \end{align*}

\item[(c)]
Smallness:
\begin{align*}
 \| K(t) (\tilde u_0 - u_0, \tilde u_1 -u_1) \|_{X(I)} & \le \epsilon  \\
 \| |\nabla|^{\frac 12} e \|_{L_{t,x}^{\frac{2(d+1)}{d+3}}(I)} \le \epsilon.
\end{align*}
\end{enumerate}

Then there exists $\epsilon_0 = \epsilon_0(d,E^\prime,E)$ such that if
$0<\epsilon<\epsilon_0$, for (NLW) with initial data $(u_0,u_1)$, there exists
a unique solution $u$ on $I\times \R^d$ with the properties
\begin{align*}
 \| \tilde u - u\|_{\dot S^1(I)} & \le C(d,E^\prime,E) \cdot E^\prime, \\
\|\tilde  u- u\|_{X(I)} & \le C(d,E^\prime,E) \cdot \epsilon.
\end{align*}

\end{thm}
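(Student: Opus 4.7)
The strategy is to iterate the short-time perturbation theorem (Theorem~\ref{thm_short}) across a partition of $I$ adapted to the bound $\|\tilde u\|_{\dot S^1(I)} \le E$. The embeddings $\dot S^1 \hookrightarrow X$ (Lemma~\ref{lem3}(e)) and $\dot S^1 \hookrightarrow W, Y$ (built into the definition of $\dot S^1$ and used throughout Section~\ref{sec-lwp}), together with the interpolation $\|\tilde u\|_{S} \lesssim \|\tilde u\|_X^{\theta_2}\|\tilde u\|_W^{1-\theta_2}$ from Lemma~\ref{lem3}(b), show that $\|\tilde u\|_X + \|\tilde u\|_W + \|\tilde u\|_S + \|\tilde u\|_Y$ on any subinterval $J \subset I$ is dominated by $\|\tilde u\|_{\dot S^1(J)}$. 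Using absolute continuity of these Strichartz-type norms in the time variable, I partition $I = \bigcup_{j=0}^{N-1} I_j$, with $I_j = [t_j,t_{j+1}]$ and $t_0 = 0$, into $N = N(d,E)$ consecutive subintervals on each of which this quadruple sum sits below the smallness threshold $\delta_0(d)$ of Theorem~\ref{thm_short}.

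On $I_0$, the hypotheses of Theorem~\ref{thm_short} are satisfied with $A' = E'$, so one obtains a solution $u$ with $\|u - \tilde u\|_{\dot S^1(I_0)} \le C(d) E'$ and $\|u-\tilde u\|_{X(I_0)} + \|F(u) - F(\tilde u)\|_{X'(I_0)} \le C(d,E')\epsilon$. For the inductive step, set $w = u - \tilde u$, which satisfies $w_{tt} - \Delta w = F(u) - F(\tilde u) - e$. Subtracting the Duhamel representations of $w$ based at $t_0$ and at $t_{j+1}$ yields
\begin{equation*}
K(t-t_{j+1})\bigl(w(t_{j+1}),w_t(t_{j+1})\bigr) = K(t)(w(0),w_t(0)) + \int_0^{t_{j+1}} \wavekernel \bigl(F(u) - F(\tilde u) - e\bigr)(s)\, ds.
\end{equation*}
Taking $X(I_{j+1})$ norms on both sides: the first term is bounded by $\epsilon$ by hypothesis; the nonlinear piece, viewed as $\int_0^t \wavekernel \chi_{[0,t_{j+1}]}(s)(F(u)-F(\tilde u))(s)\,ds$, is controlled by $C \sum_{k \le j} \|F(u)-F(\tilde u)\|_{X'(I_k)}$ via Lemma~\ref{lem4}; and the $e$-contribution is bounded by $C\epsilon$ via the standard Strichartz estimate together with the embedding $\dot S^1 \hookrightarrow X$. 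A parallel Strichartz bound at time $t_{j+1}$ gives $A'_{j+1} := \|(w,w_t)(t_{j+1})\|_{\dot H^1 \times L^2} \le C(d) A'_j$. These estimates verify the hypotheses of Theorem~\ref{thm_short} on $I_{j+1}$ with parameters $A'_{j+1}$ and an explicitly controlled linear-flow smallness $\epsilon_{j+1}$.

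A discrete Gronwall argument then closes the iteration. Writing $\epsilon_j$ for the linear-flow smallness on $I_j$, Theorem~\ref{thm_short} supplies $\|F(u)-F(\tilde u)\|_{X'(I_j)} \le C(d,A'_j)\epsilon_j$, while the previous paragraph gives the recursion $\epsilon_{j+1} \le C\epsilon + C(d, E, E') \sum_{k \le j} \epsilon_k$ together with $A'_j \le C(d)^N E'$. Induction yields $\epsilon_j \le (C(d,E,E'))^N \epsilon$ for all $j \le N$, so choosing $\epsilon_0(d,E,E')$ sufficiently small forces $\epsilon_j$ to remain below the constant required by Theorem~\ref{thm_short} at every stage. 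Summing the resulting bounds on $\|\tilde u - u\|_{\dot S^1(I_j)}$ and $\|\tilde u - u\|_{X(I_j)}$ across the $N$ pieces produces the desired estimates $\|\tilde u - u\|_{\dot S^1(I)} \le C(d, E, E') E'$ and $\|\tilde u - u\|_{X(I)} \le C(d, E, E') \epsilon$. The essential technical obstacle is precisely this bookkeeping: each application of Theorem~\ref{thm_short} compounds dimensional constants, but since $N$ depends only on $d$ and $E$ (not on $E'$ or $\epsilon$), the product $C(d, E, E')^N$ is finite and can be absorbed by requiring $\epsilon_0 = \epsilon_0(d, E, E')$ small.
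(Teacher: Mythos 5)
Your proof is correct and follows essentially the same route as the paper: partition $I$ into $N(d,E)$ subintervals on which $\|\tilde u\|_{S(I_j)}+\|\tilde u\|_{W(I_j)}+\|\tilde u\|_{X(I_j)}+\|\tilde u\|_{Y(I_j)}<\delta_0(d)$ (justified by $\dot S^1\hookrightarrow S,W,X,Y$ and absolute continuity of those $L^p_t$-type norms), apply Theorem~\ref{thm_short} on the first piece with $A'=E'$, and then iterate, re-verifying the hypotheses on each subsequent piece via the $L^\infty_t(\dot H^1\times L^2)$ component of the $\dot S^1$ bound and the Duhamel identity connecting the linear flows based at $0$ and at $t_{j+1}$. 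The only presentational difference is that you phrase the bookkeeping as a discrete Gronwall recursion on the local smallness parameters $\epsilon_j$, whereas the paper carries along the cumulative bound $\|F(\tilde u)-F(u)\|_{X'([0,t_i])}\le C_i\epsilon$ as part of the induction hypothesis; the two are equivalent, and both exploit that the constants compound only finitely many times since $N$ depends only on $d$ and $E$.
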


\begin{proof}
Let $\delta_0=\delta_0(d)$ be chosen in the way as in Theorem \ref{thm_short}.
Denote $t_0=0$. Partition the time interval $I$ into $I=\bigcup_{j=1}^k I_j=\bigcup_{j=1}^k [t_{j-1},t_j]$
such that on each subinterval $I_j$
\begin{align*}
 \|\tilde u\|_{S(I_j)} + \| \tilde u\|_{W(I_j)} + \| \tilde u\|_{X(I_j)}
+\|\tilde u\|_{Y(I_j)} < \delta_0.
\end{align*}
One can choose $k=O( ( E /{\delta_0} )^{C(d)} )$ such intervals, where
$C(d)$ is a constant depending only on the dimension $d$. Consider the first
subinterval $I_1=[0,t_1]$. By Theorem \ref{thm_short}, for $\epsilon$ sufficiently small
depending only on $(d,E^\prime)$, we have
\begin{align*}
 \| \tilde u -u \|_{\dot S^1(I_1)} & \le C(d,E^\prime)\cdot E^\prime,\\
\| \tilde u - u \|_{X(I_1)} & \le C(d,E^\prime) \cdot \epsilon,\\
\| F(\tilde u) -F(u) \|_{X^\prime (I_1)} & \le C(d,E^\prime) \cdot \epsilon.
\end{align*}
Next for $1\le i\le k-1$, make the inductive assumption that
\begin{align}
 \|\tilde u - u\|_{\dot S^1(I_i)} & \le C_i(d,E^\prime, E) \cdot E^\prime, \label{e21a} \\
\| \tilde u - u\|_{X(I_i)} & \le C_i(d,E^\prime,E) \cdot \epsilon, \label{e21b} \\
\|F(\tilde u) -F(u) \|_{X^\prime ([0,t_i])} & \le C_i(d,E^\prime,E) \cdot \epsilon. \label{e21c}
\end{align}
Then for $I_{i+1}=[t_i,t_{i+1}]$ we will apply Theorem \ref{thm_short} with a time
shift $t_i$. For this we have to check the hypotheses of Theorem \ref{thm_short}.
To this end, by \eqref{e21a}, we have
\begin{align*}
 \|\tilde u(t_i) -u(t_i)\|_{\dot H^1}
+\|(\partial_t \tilde u)(t_i) -(\partial_t u)(t_i) \|_{L_x^2}
\le 2 C_i(d,E^\prime, E) \cdot E^\prime.
\end{align*}
Next by using Duhamel's formula and \eqref{e21c}, we have
\begin{align*}
 & \| K(t-t_i) ( \tilde u(t_i)-u(t_i), (\partial_t \tilde u)(t_i) -(\partial_t u)(t_i) ) \|_{X(I_{i+1})} \\
&\hspace{0.2in}\lesssim   \| K(t) (\tilde u_0 -u_0, \tilde u_1 -u_1) \|_{X(I)}
+ \|F(\tilde u) -F(u) \|_{X^\prime ([0,t_i])} + \||\nabla|^{\frac 12} e \|_{L_{t,x}^{\frac{2(d+1)}{d+3}}(I)} \\
&\hspace{0.2in}\lesssim   (1+ c_i(d,E^\prime,E)) \cdot \epsilon.
\end{align*}
Then for $\epsilon$ sufficiently small depending only on $(d,E^\prime,E)$, Theorem \ref{thm_short}
and \eqref{e21c} give
\begin{align*}
 \|\tilde u-u\|_{\dot S^1(I_{i+1})} & \le C_{i+1}(d,E^\prime,E) \cdot E^\prime, \\
\|\tilde u-u \|_{X(I_{i+1})} & \le C_{i+1}(d,E^\prime, E) \cdot \epsilon, \\
\|F(\tilde u) -F(u)\|_{X^\prime ([0,t_{i+1}])} &\le
\|F(\tilde u)-F(u)\|_{X^\prime ([0,t_i])} + \|F(\tilde u) -F(u)\|_{X^\prime ([t_i,t_{i+1}])} \\
& \le C_{i+1}(d,E^\prime,E) \cdot \epsilon.
\end{align*}
Consequently we have verified \eqref{e21a}--\eqref{e21c} for all $1\le i\le k$.
We stress that the choice of $\epsilon$ is consistent
since $k=O ( ( E{/\delta_0})^{C(d)} )$
is finite and we only need to adjust $\epsilon$ at most $k$ times. The
theorem now follows by summing \eqref{e21a}--\eqref{e21b}.
\end{proof}

Similar to the derivation of Corollary \ref{cor_short}, the same arguments as in the proof
of Theorem \ref{thm_long1} give the following result.
\begin{cor}[Long time perturbation, Besov $\epsilon$-perturbation version] \label{cor_long1a}
Assume $\tilde u$ is a near solution on  $I\times \R^d$
\begin{align*}
 \partial_{tt} \tilde u -\Delta \tilde u= F(\tilde u) +e
\end{align*}
such that
\begin{enumerate}
\item[(a)]
 \begin{align*}
    \| \tilde u\|_{\dot S^1(I)} \le E.
   \end{align*}

\item[(b)]
 \begin{align*}
    \|\tilde u_0 -u_0 \|_{\dot H^1} + \| \tilde u_1 -u_1 \|_{L^2} \le \epsilon.
   \end{align*}

\item[(c)]
Smallness:
\begin{align*}
 \| |\nabla|^{\frac 12} e \|_{L_{t,x}^{\frac{2(d+1)}{d+3}}(I)} \le \epsilon.
\end{align*}
\end{enumerate}

Then there exists $\epsilon_0 = \epsilon_0(d,E)$ such that if
$0<\epsilon<\epsilon_0$, for (NLW) with initial data $(u_0,u_1)$, there exists
a unique solution $u$ on $I\times \R^d$ with the properties
\begin{align}
 \| \tilde u - u\|_{\dot S^1(I)} & \le C(d,E) \cdot \epsilon^{c_1}, \label{e635a}\\
\|\tilde  u- u\|_{X(I)} & \le C(d,E) \cdot \epsilon. \notag
\end{align}
Here $0<c_1<1$ is a constant depending on $(d,E)$.
\end{cor}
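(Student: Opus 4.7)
The plan is to mimic the proof of Theorem \ref{thm_long1} with one crucial modification: on the first subinterval I would invoke Corollary \ref{cor_short} (the $\epsilon$-perturbation short-time version) rather than Theorem \ref{thm_short}, and on all subsequent subintervals I would revert to Theorem \ref{thm_short}. As in the proof of Theorem \ref{thm_long1}, first partition $I$ into $k=O((E/\delta_0)^{C(d)})$ subintervals $I_j=[t_{j-1},t_j]$ on which
$$\|\tilde u\|_{S(I_j)}+\|\tilde u\|_{W(I_j)}+\|\tilde u\|_{X(I_j)}+\|\tilde u\|_{Y(I_j)}<\delta_0,$$
where $\delta_0=\delta_0(d)$ is the constant from Corollary \ref{cor_short}. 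On $I_1$, applying Corollary \ref{cor_short} directly (using only the $\dot H^1\times L^2$ smallness of the data difference) yields, for $\epsilon$ small enough depending on $(d,E)$,
$$\|\tilde u-u\|_{\dot S^1(I_1)}\le C(d)\epsilon^c, \quad \|\tilde u-u\|_{X(I_1)}\le C(d)\epsilon, \quad \|F(\tilde u)-F(u)\|_{X'(I_1)}\le C(d)\epsilon.$$

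For the inductive step on $I_{i+1}$, I would shift time to $t_i$ and apply Theorem \ref{thm_short} with parameter $A'_{i+1}:=2C_i(d,E)\epsilon^c$, which by the inductive hypothesis controls $\|(\tilde u-u)(t_i)\|_{\dot H^1}+\|\partial_t(\tilde u-u)(t_i)\|_{L^2}$. The smallness of the restarted linear flow follows, exactly as in Theorem \ref{thm_long1}, from Duhamel, Strichartz, and Lemma \ref{lem4}:
$$\|K(t-t_i)((\tilde u-u)(t_i),\partial_t(\tilde u-u)(t_i))\|_{X(I_{i+1})}\lesssim \epsilon + C_i(d,E)\epsilon +\epsilon,$$
where the first term bounds the free evolution $K(t)(\tilde u_0-u_0,\tilde u_1-u_1)$ in $X$ via Strichartz and $\dot S^1\hookrightarrow X$, the second uses the accumulated $X'$-bound on $F(\tilde u)-F(u)$ on $[0,t_i]$, and the third is the error hypothesis on $e$. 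Feeding these into Theorem \ref{thm_short} yields
\begin{align*}
\|\tilde u-u\|_{\dot S^1(I_{i+1})}&\le C(d)\cdot A'_{i+1}\le C_{i+1}(d,E)\epsilon^c,\\
\|\tilde u-u\|_{X(I_{i+1})}&\le C(d,A'_{i+1})\cdot C_i(d,E)\epsilon \le C_{i+1}(d,E)\epsilon,\\
\|F(\tilde u)-F(u)\|_{X'([0,t_{i+1}])}&\le C_{i+1}(d,E)\epsilon,
\end{align*}
closing the induction. Since $k=k(d,E)$ is finite, choosing $\epsilon_0=\epsilon_0(d,E)$ small enough to satisfy the smallness hypotheses of Theorem \ref{thm_short} at every step and summing over the $k$ subintervals will produce the claimed bounds with $c_1=c$.

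The main conceptual obstacle, and the reason for the mismatch between the two conclusions in the statement, is the interplay between the linear $\epsilon$-scaling of the $X$-norm and the sublinear $\epsilon^c$-scaling of the $\dot S^1$-norm. The $\epsilon^c$ loss is incurred exactly once, in the base case via Corollary \ref{cor_short}, because that result does not assume an a priori $X$-smallness of the linear flow of the initial data difference. Once this initial loss is absorbed into $A'$, Theorem \ref{thm_short} on subsequent intervals produces a $\dot S^1$-bound linear in $A'$, so the exponent $c$ propagates without further degradation. In parallel, the $X$-bound remains linear in $\epsilon$ throughout the induction because the restarted $K(t-t_i)$ is controlled via the $X'$-norm of the nonlinear difference, which itself is linear in $\epsilon$ by the inductive hypothesis; this is what allows the sharper conclusion $\|\tilde u-u\|_{X(I)}\le C(d,E)\epsilon$ alongside the weaker $\epsilon^{c_1}$ bound in $\dot S^1$.
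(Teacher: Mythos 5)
Your argument is plausible, and in fact it is a genuinely different route from what the paper's sketch suggests. The paper attributes the dependence of $c_1$ on $E$ to applying ``the short time theory (Corollary \ref{cor_short})'' once per subinterval, losing a factor of $c$ in the exponent each time, which after $k=O(E^{C(d)})$ subintervals would give $c_1 \sim c^k$. You instead invoke Corollary \ref{cor_short} only on $I_1$ (where the $\dot H^1\times L^2$ data difference is genuinely $O(\epsilon)$ and there is no $X$-smallness hypothesis to exploit), and then switch to Theorem \ref{thm_short} for $I_2,\dots,I_k$, using the linear-in-$A'$ conclusion \eqref{e10a}. If this is legitimate, it gives the sharper conclusion $c_1=c$ depending only on $d$, which is stronger than (but still consistent with) the paper's claim that $c_1$ depends on $(d,E)$. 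Your explanation of why the $X$-bound stays linear in $\epsilon$ across the induction while the $\dot S^1$-bound carries a single $\epsilon^c$ loss is also the correct accounting, and this is exactly what the paper's stated conclusions (linear $X$-bound, sublinear $\dot S^1$-bound) reflect.

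The one thing you gloss over, and it is not cosmetic, is the applicability of Theorem \ref{thm_short} in the inductive step. That theorem is stated for $\epsilon \le \epsilon_0(A')$ with $\epsilon_0(A')$ left unspecified, and its proof (via the bootstrap on \eqref{e104_a}--\eqref{e104_b}) implicitly yields a threshold that \emph{shrinks} as $A'\to 0$: roughly $\epsilon_0(A')\sim (A')^{1/c}$ with $1/c>1$ for $d>6$, because the H\"older source $\|u-\tilde u\|_X^{\frac{4}{d-2}\theta_2}\|u-\tilde u\|_{\dot S^1}^{\frac{4}{d-2}(1-\theta_2)}\cdot\delta$ produces a fixed point of size $\epsilon^c$ which must be dominated by $A'$. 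You are applying Theorem \ref{thm_short} with $A'_{i+1}\sim C_i\epsilon^c$, an $\epsilon$-dependent quantity, and with $\epsilon'_{i+1}\sim C_i\epsilon$. The condition $\epsilon'_{i+1}\le \epsilon_0(A'_{i+1})$ becomes $C_i\epsilon \lesssim (C_i\epsilon^c)^{1/c}=C_i^{1/c}\epsilon$, i.e., $C_i^{1-1/c}\lesssim 1$, which does hold for $C_i$ bounded below since $1/c>1$. So the scheme closes, but this borderline compatibility of the two scalings is precisely what distinguishes the case $A'\sim\epsilon^c$ from both the $A'\sim\epsilon$ case (where Theorem \ref{thm_short} fails outright and Corollary \ref{cor_short} is needed) and the $A'=O(1)$ case (where the threshold is a fixed constant). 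You should make this verification explicit rather than ``feeding into'' a black-boxed theorem statement; otherwise a reader may not see why the exponent does not degrade.
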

\begin{proof}
This is essentially a repetition of the proof of Theorem \ref{thm_long1}. Note that
in \eqref{e635a} the constant $c_1$ depends both on the dimension $d$ and $E$. This
is a consequence of the short time perturbation theory (Corollary \ref{cor_short})
where we lose a power of $c$ due to the H\"older continuity of the nonlinearity. The additional dependence on $E$ comes
from the fact that we have to apply the short time theory $O(E^{C(d)})$ times.
\end{proof}

To obtain the usual Sobolev space version of Theorem \ref{thm_long1}, we need the following lemma, which shows that the $\dot{S}^1$ norm of the solution of the perturbed equation is bounded.

\begin{lem}[Boundedness of near solutions in Besov spaces] \label{lem7a}
Let $\tilde u$ be a near solution on $I\times \R^d$
\begin{align*}
 \partial_{tt} \tilde u -\Delta \tilde u= F(\tilde u) + e,
\end{align*}
such that
\begin{align}
 \|\tilde u \|_{L_t^\infty \dot H^1(I) } +
 \| \partial_t  \tilde u\|_{L_t^\infty L^2 (I)}
 &+\| \tilde u \|_{L_t^{\frac{2(d+1)}{d-1}} \dot H^{\frac 12,
\frac{2(d+1)}{d-1}} (I)} \le E, \label{e113a} \\
\| e\|_{L_t^{\frac{2(d+1)}{d+3}} \dot H^{\frac 12,\frac {2(d+1)}{d+3}} (I)} & \le E, \label{e113b}
\end{align}
Then
\begin{align} \label{eq_1231a}
 \| \tilde u \|_{\dot S^1 (I)} \le C(d,E).
\end{align}
\end{lem}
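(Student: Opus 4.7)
The plan is to apply Strichartz in the Besov scale (Lemma \ref{lem1}) to the Duhamel formula for $\tilde u$, after partitioning $I$ into subintervals on which $\tilde u$ has small scattering size. On each piece the nonlinear estimate \eqref{e218d} of Lemma \ref{lem4a} will absorb the forcing $F(\tilde u)$, while the error term $e$ is converted from the Sobolev bound \eqref{e113b} to a Besov bound via the embedding $\dot F^{s}_{p,2}\hookrightarrow \dot B^{s}_{p,2}$ available for $p\le 2$. Summing over the finitely many subintervals then produces the global bound \eqref{eq_1231a}.

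First I would establish the preliminary estimates $\|\tilde u\|_{S(I)}, \|\tilde u\|_{X(I)} \lesssim C(d,E)$ directly from \eqref{e113a}. For the $S$-norm, Sobolev embedding in $x$ gives $\dot H^{1/2, 2(d+1)/(d-1)}\hookrightarrow L_x^{q_0}$ with $q_0 = 2d(d+1)/(d^2-2d-1)>2(d+1)/(d-2)$, and interpolation in $(t,x)$ between $L_t^{2(d+1)/(d-1)} L_x^{q_0}$ and $L_t^\infty L_x^{2d/(d-2)}$ (the latter from $L_t^\infty \dot H^1$) yields $\|\tilde u\|_{L_{t,x}^{2(d+1)/(d-2)}} \lesssim E$; the $X$-bound then follows from Lemma \ref{lem3}(a). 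Next I would partition $I=\bigcup_{j=1}^N I_j$ into $N=N(d,E)$ subintervals on each of which $\|\tilde u\|_{S(I_j)}<\eta$, for a small parameter $\eta=\eta(d,E)$ to be fixed below.

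On each $I_j=[t_{j-1},t_j]$ I would apply Strichartz to the Duhamel representation based at $t_{j-1}$ to obtain
\[
\|\tilde u\|_{\dot S^1(I_j)}
  \lesssim \|\tilde u(t_{j-1})\|_{\dot H^1}
  + \|\partial_t\tilde u(t_{j-1})\|_{L^2}
  + \|F(\tilde u)\|_{W'(I_j)}
  + \|e\|_{W'(I_j)}.
\]
The first two terms are $\lesssim E$ by hypothesis. For the error, since $p:=2(d+1)/(d+3)<2$ the Sobolev-Besov embedding gives $\|e\|_{W'(I_j)} \lesssim \|e\|_{L_t^{p} \dot H^{1/2,p}(I_j)} \lesssim E$. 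For the nonlinear term, \eqref{e218d} combined with the smallness from Lemma \ref{lem3}(a) produces
\[
\|F(\tilde u)\|_{W'(I_j)}
  \lesssim \bigl(\eta^{\theta_1}E^{1-\theta_1}\bigr)^{\theta_2\cdot \frac{4}{d-2}}
  \|\tilde u\|_{\dot S^1(I_j)}^{1+(1-\theta_2)\cdot \frac{4}{d-2}}.
\]
Choosing $\eta=\eta(d,E)$ sufficiently small so that $C\eta^{\theta_1\theta_2\cdot 4/(d-2)}(2CE)^{(1-\theta_2)\cdot 4/(d-2)}E^{(1-\theta_1)\theta_2\cdot 4/(d-2)}<\tfrac12$, a continuity-in-endpoint bootstrap yields $\|\tilde u\|_{\dot S^1(I_j)} \le C(d,E)$, and summing over the finitely many $I_j$ proves \eqref{eq_1231a}.

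The main obstacle is justifying the bootstrap: the inequality above is superlinear in $\|\tilde u\|_{\dot S^1(I_j)}$, so one needs this quantity to be a priori finite and continuous in the sliding endpoint. I would handle this through a regularization argument: replace $(\tilde u_0,\tilde u_1,e)$ by frequency truncations $(P_{\le n}\tilde u_0, P_{\le n}\tilde u_1, P_{\le n}e)$, note that the associated near-solutions $\tilde u^{(n)}$ are locally $\dot S^1$ by higher-regularity local theory together with the blow-up criterion of Lemma \ref{blowup_criterion_lemma}, run the bootstrap above to obtain $\dot S^1$ bounds uniform in $n$, and finally pass to the weak limit in $\dot S^1$.
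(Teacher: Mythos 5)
Your plan does not quite work, and the trouble lies exactly where you flag it: the step
$\|F(\tilde u)\|_{W'(I_j)}\lesssim \|\tilde u\|_{X(I_j)}^{\theta_2\cdot 4/(d-2)}\|\tilde u\|_{\dot S^1(I_j)}^{1+(1-\theta_2)4/(d-2)}$
produces a superlinear appearance of $\|\tilde u\|_{\dot S^1(I_j)}$ on the right. To close this by a continuity/bootstrap argument you need $\|\tilde u\|_{\dot S^1}$ to be a priori finite and to vary continuously in the sliding endpoint, and neither is available here: $\tilde u$ is an arbitrary given near-solution, not one you construct. The regularization you sketch does not repair this. If you frequency-truncate the data and error, the resulting function solving the truncated inhomogeneous wave equation need not satisfy the hypotheses \eqref{e113a}--\eqref{e113b} uniformly in the truncation parameter; and if instead you set $\tilde u^{(n)}:=P_{\le n}\tilde u$, then $\tilde u^{(n)}$ solves $\partial_{tt}\tilde u^{(n)}-\Delta\tilde u^{(n)}=P_{\le n}F(\tilde u)+P_{\le n}e$, whose forcing is \emph{not} of the form $F(\tilde u^{(n)})+\text{error}$, so \eqref{e218d} cannot be applied to it. So the a priori finiteness of the Besov Strichartz norm remains unjustified, and the bootstrap does not close.

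What makes the paper's proof a one-liner, and removes the circularity entirely, is the very observation you make for $e$ but do not exploit for the nonlinearity: since $W'$ is a Besov norm with integrability exponent $\frac{2(d+1)}{d+3}<2$, one has $\dot H^{1/2,p}\hookrightarrow\dot B^{1/2}_{p,2}$, so one may bound $\|F(\tilde u)\|_{W'}$ by the \emph{Sobolev} quantity $\||\nabla|^{1/2}F(\tilde u)\|_{L_{t,x}^{2(d+1)/(d+3)}}$. The $C^1$ fractional chain rule (Lemma~\ref{lem5}) and H\"older in time then give
\begin{align*}
\|F(\tilde u)\|_{W'(I)}
\lesssim \|\tilde u\|_{L_{t,x}^{\frac{2(d+1)}{d-2}}(I)}^{\frac{4}{d-2}}\,
\|\tilde u\|_{L_t^{\frac{2(d+1)}{d-1}}\dot H^{\frac 12,\frac{2(d+1)}{d-1}}(I)},
\end{align*}
and the interpolation inequality \eqref{e112} controls $\|\tilde u\|_{L_{t,x}^{2(d+1)/(d-2)}}$ by the Sobolev quantities in \eqref{e113a}. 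Thus \emph{every} quantity on the right of Strichartz is already bounded by $E$ by hypothesis, no $\dot S^1$ norm ever appears on the right, and there is no need to partition $I$, no smallness parameter $\eta$, and no bootstrap: one application of Strichartz on the whole interval gives $\|\tilde u\|_{\dot S^1(I)}\lesssim E+E^{1+4/(d-2)}$. In short, replace \eqref{e218d} by the direct Sobolev-into-Besov bound on $F(\tilde u)$ via the fractional chain rule, and your proof collapses to the paper's argument while eliminating the gap.
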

\begin{proof}
 We first have the interpolation inequality
\begin{align} \label{e112}
 \| \tilde u \|_{L_{t,x}^{\frac{2(d+1)}{d-2}}} \lesssim
\|\tilde u\|^{\frac 1{d-1}}_{L_t^\infty \dot H^1} \| \tilde
u\|^{\frac{d-2}{d-1}}_{L_t^{\frac{2(d+1)}{d-1}} \dot H^{\frac 12,
\frac{2(d+1)}{d-1}} }.
\end{align}
Now by \eqref{e113a}, \eqref{e113b}, Strichartz and \eqref{e112}, we
have
\begin{align*}
 \| \tilde u \|_{\dot S^1 (I)} &\lesssim E +
  \| \tilde u\|^{\frac 4{d-2}}_{L_{t,x}^{\frac{2(d+1)}{d-2}} (I)} \cdot \| \tilde u
\|_{L_t^{\frac{2(d+1)}{d-1}} \dot H^{\frac 12, \frac {2(d+1)}{d-1}} (I)}
\\
& \lesssim E+   E^{1+\frac 4 {d-2}} .
\end{align*}
This immediately  gives us \eqref{eq_1231a}.
\end{proof}

We are now ready to prove the main perturbation result stated
in Theorem \ref{thm_long2}.

\begin{proof}
By \eqref{e133a}, \eqref{e133c} and taking $\epsilon<E$, Lemma \ref{lem7a} gives us
\begin{align*}
 \| \tilde u\|_{\dot S^1(I)} \le C(d,E).
\end{align*}
By \eqref{e133b} and Lemma \ref{lem3}, we have
\begin{align*}
 & \| K(t) (\tilde u_0 - u_0, \tilde u_1 -u_1) \|_{X(I)}\\
&\hspace{0.2in} \lesssim \| K(t) (\tilde u_0 - u_0, \tilde u_1 -u_1) \|_{S(I)}^{\theta_1} \cdot
\| K(t) (\tilde u_0 - u_0, \tilde u_1 -u_1) \|_{L_t^\infty \dot H^1(I)}^{1-\theta_1}  \\
&\hspace{0.2in} \lesssim \epsilon^{\theta_1} \cdot (E^\prime)^{1-\theta_1}.
\end{align*}
Now for $\epsilon$ sufficiently small depending on $(d,E^\prime,E)$, we can apply
Theorem \ref{thm_long1} to obtain that
\begin{align}
 \| \tilde u - u\|_{\dot S^1(I)} & \le C(d,E^\prime,E) \cdot E^\prime, \label{e211a} \\
\|\tilde  u- u\|_{X(I)} & \le C(d,E^\prime,E) \cdot \epsilon. \label{e211b}
\end{align}
Finally \eqref{e210} follows from \eqref{e211b}, Lemma \ref{lem3} and \eqref{e211a}.
The theorem is proved.
\end{proof}

Finally we have the $\epsilon$-perturbation version of Theorem \ref{thm_long2} similar to
Corollary \ref{cor_long1a}. We omit the proof and leave the details to interested readers.
\begin{cor}[Long time perturbation, Sobolev $\epsilon$-perturbation version] \label{cor_long2}
Assume $\tilde u$ is a near solution on  $I\times \R^d$
\begin{align*}
 \partial_{tt} \tilde u-\Delta \tilde u = F(\tilde u) +e
\end{align*}
such that
\begin{enumerate}
\item[(a)]
\begin{align*}
   \|\tilde u \|_{L_t^\infty \dot H^1(I) } +
 \| \partial_t  \tilde u\|_{L_t^\infty L^2 (I)}
 +\| \tilde u \|_{L_t^{\frac{2(d+1)}{d-1}} \dot H^{\frac 12,
\frac{2(d+1)}{d-1}} (I)} \le E.
   \end{align*}

\item[(b)]
\begin{align*}
    \|\tilde u_0 -u_0 \|_{\dot H^1} + \| \tilde u_1 -u_1 \|_{L^2} \le \epsilon.
   \end{align*}

\item[(c)] Smallness:
\begin{align*}
 \| |\nabla|^{\frac 12} e \|_{L_{t,x}^{\frac{2(d+1)}{d+3}}(I)} \le \epsilon.
\end{align*}
\end{enumerate}

Then there exists $\epsilon_0 = \epsilon_0(d,E)$ such that if
$0<\epsilon<\epsilon_0$, for (NLW) with initial data $(u_0,u_1)$, there exists
a unique solution $u$ on $I\times \R^d$ with the properties
\begin{align}
 \| \tilde u - u\|_{\dot S^1(I)} & \le C(d,E) \cdot \epsilon^{c_3}, \notag\\
\|\tilde  u- u\|_{S(I)} & \le C(d,E) \cdot \epsilon^{c_4}.
\end{align}
Here $0<c_3<1$ is a constant depending on $(d,E)$ and $0<c_4<1$ depends only on the
dimension $d$.
\end{cor}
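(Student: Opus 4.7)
The plan is to reduce the Sobolev $\epsilon$-perturbation statement to the already-established Besov version (Corollary \ref{cor_long1a}), using Lemma \ref{lem7a} to upgrade the $L_t^\infty \dot H^1$-type hypothesis on $\tilde u$ to a full $\dot S^1$ bound, and then using the interpolation identity in Lemma \ref{lem3}(b) to recover the $S$ norm estimate from the $X$ norm estimate. Since the framework of the proof is identical in spirit to the derivation of Theorem \ref{thm_long2} from Theorem \ref{thm_long1}, I do not anticipate any new analytical difficulty; rather, the main point is bookkeeping of the dependence of the constant $c_4$ on the dimension alone.

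First I would invoke Lemma \ref{lem7a}: assuming $\epsilon \leq E$ (harmless), the hypotheses \eqref{e113a}--\eqref{e113b} of that lemma are satisfied (possibly enlarging $E$ by a universal constant), and we conclude
\begin{align*}
\|\tilde u\|_{\dot S^1(I)} \le \tilde E := C(d,E).
\end{align*}
Together with hypothesis (b) on the data difference and hypothesis (c) on $e$, this verifies precisely the three hypotheses of Corollary \ref{cor_long1a} with $E$ replaced by $\tilde E$. Choosing $\epsilon_0 = \epsilon_0(d,E)$ even smaller than required by that corollary, we obtain a unique solution $u$ on $I \times \R^d$ satisfying
\begin{align*}
\|\tilde u - u\|_{\dot S^1(I)} &\le C(d,E)\,\epsilon^{c_1}, \\
\|\tilde u - u\|_{X(I)} &\le C(d,E)\,\epsilon,
\end{align*}
where $c_1 = c_1(d,\tilde E) = c_1(d,E)$. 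Setting $c_3 := c_1$ gives the first of the two desired bounds.

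For the $S$ norm estimate, I would apply Lemma \ref{lem3}(b) to the difference $\tilde u - u$:
\begin{align*}
\|\tilde u - u\|_{S(I)} \lesssim \|\tilde u - u\|_{X(I)}^{\theta_2} \cdot \|\tilde u - u\|_{W(I)}^{1-\theta_2}.
\end{align*}
The key observation is that the pair $(q,r) = \bigl(\tfrac{2(d+1)}{d-1}, \tfrac{2(d+1)}{d-1}\bigr)$ is wave-admissible with $\beta(r)=\tfrac12$, so the $W$ norm appears as one of the constituent norms in the $\dot S^1$ definition, giving $\|\tilde u - u\|_{W(I)} \le \|\tilde u - u\|_{\dot S^1(I)} \le C(d,E)\,\epsilon^{c_1} \le C(d,E)$. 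Combining with the $X$ bound above yields
\begin{align*}
\|\tilde u - u\|_{S(I)} \le C(d,E)\,\epsilon^{\theta_2},
\end{align*}
and setting $c_4 := \theta_2$ -- which depends only on the dimension through Lemma \ref{lem3} -- finishes the argument. The only mildly delicate point to be careful about is to extract the dimension-only exponent $c_4$: one must \emph{not} chain the $\dot S^1$ H\"older power $\epsilon^{c_1}$ into the interpolation, but instead absorb the $\dot S^1$ factor into the constant $C(d,E)$, so that all the $\epsilon$-smallness is supplied by the $X$ factor.
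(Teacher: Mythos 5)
Your proposal is correct and follows exactly the route the paper sketches: apply Lemma \ref{lem7a} to upgrade the Sobolev hypotheses to a $\dot S^1$ bound, invoke the Besov $\epsilon$-perturbation version (Corollary \ref{cor_long1a}) directly, and then recover the $S$-norm estimate via the interpolation Lemma \ref{lem3}(b), mirroring the paper's proof of Theorem \ref{thm_long2}. Your remark about absorbing the $\dot S^1$ factor into $C(d,E)$ rather than chaining the H\"older power $\epsilon^{c_1}$ into the interpolation is precisely the bookkeeping needed to make $c_4 = \theta_2$ depend on the dimension alone, as claimed.
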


\end{document}